\newcommand\bbR{\mathbb{R}}
\newcommand\bbS{\mathbb{R}^{d\times d}_{\text{sym}}}
\newcommand\cC{\mathcal{C}}
\newcommand\cF{\mathcal{F}}
\newcommand\cH{\mathcal{H}}
\newcommand\cP{\mathcal{P}}
\newcommand\cT{\mathcal{T}}
\newcommand\cX{\mathcal{X}}
\newcommand\bn{\boldsymbol{n}}
\newcommand\bF{\boldsymbol{f}}
\newcommand\bu{\boldsymbol{u}}
\newcommand\bv{\boldsymbol{v}}
\newcommand\bw{\boldsymbol{w}}
\newcommand\bd{\boldsymbol{d}}
\newcommand\be{\boldsymbol{e}}
\newcommand\beps{\boldsymbol{\varepsilon}}
\newcommand\bsig{\boldsymbol{\sigma}}
\newcommand\btau{\boldsymbol{\tau}}
\newcommand\bnabla{\boldsymbol{\nabla}}
\newcommand\bchi{\boldsymbol{\chi}}
\newcommand\ubu{\underline{\boldsymbol{u}}}
\newcommand\ubv{\underline{\boldsymbol{v}}}
\newcommand\mt{\mathtt{t}}
\newcommand{\tr}{\operatorname{tr}}
\renewcommand{\div}{\operatorname{div}}
\newcommand{\bdiv}{\operatorname{\mathbf{div}}}
\newcommand{\mmean}[1]{\left\{\kern-1.ex\left\{ #1 \right\}\kern-1.ex\right\}}
\DeclarePairedDelimiterX\norm[1]\lVert\rVert{
   \ifblank{#1}{\:\cdot\:}{#1}
}
\DeclarePairedDelimiterX\abs[1]\lvert\rvert{
   \ifblank{#1}{\:\cdot\:}{#1}
}
\DeclarePairedDelimiter{\inner}{(}{)}
\DeclarePairedDelimiter{\set}{\{}{\}}
\DeclarePairedDelimiter{\dual}{\langle}{\rangle}
\DeclareMathOperator*{\esssup}{ess\,sup}
\titleformat{\section}[block]{\filcenter\bfseries}{\thesection.}{1em}{}
\titleformat{\subsection}[block]{\bfseries}{\thesubsection.}{1em}{}
\newtheoremstyle{paper}{}{}{\itshape}{}{\scshape}{.}{.5em}{}
\theoremstyle{paper}
\newtheorem{theorem}{Theorem}
\newtheorem{proposition}{Proposition}
\newtheorem{lemma}{Lemma}
\newtheorem{remark}{Remark}
\begin{document}
\title{\huge An $hp$ Error Analysis of HDG for Dynamic Poroelasticity}
\author{Salim Meddahi
	\thanks{This research was  supported by Ministerio de Ciencia e Innovación, Spain.}}



\maketitle

\begin{abstract}

	\noindent
	This study introduces a hybridizable discontinuous Galerkin (HDG) method for simulating low-frequency wave propagation in poroelastic media. We present a novel four-field variational formulation and establish its well-posedness and energy stability. Our \(hp\)-convergence analysis of the HDG method for spatial discretization is complemented by a Crank–Nicolson scheme for temporal discretization. Numerical experiments validate the theoretical convergence rates and demonstrate the effectiveness of the method in accurately capturing poroelastic dynamics.
\end{abstract}
\bigskip

\noindent
\textbf{Mathematics Subject Classification.}  65N12, 65N15, 65N30
\bigskip

\noindent
\textbf{Keywords.}
Poroelasticity, hybridizable discontinuous Galerkin, $hp$ error estimates


\section{Introduction}\label{s:introduction}

The theory of poroelasticity, pioneered by Biot in 1941 and further developed in 1955 \cite{Biot1956}, establishes a mathematical framework to describe the coupled interactions between fluid flow and elastic deformation in porous media. This theory has found diverse applications across multiple disciplines. In geomechanics, it enables the analysis of soil consolidation, reservoir compaction, and land subsidence \cite{gai2004}. In biomedical science, it supports modeling the mechanical behavior of soft tissues, including brain matter and cartilage \cite{quarteroni2023}. The framework is also essential in various industrial applications, particularly in oil and gas extraction, carbon dioxide sequestration, and geothermal energy production.

The literature predominantly focuses on quasi-static poroelastic formulations, in which acceleration effects are negligible. Significant advances have been made in developing robust discretization schemes for Biot's consolidation problems \cite{Boffi2016, Chen2013, Fu2019, Hu2017, Lederer2021, Lee2023c, Oyarzua2016}. However, many practical applications require Biot's dynamic model \cite{Biot1956}, which captures wave propagation phenomena by incorporating acceleration effects in both the solid and fluid phases. These dynamic effects are essential for understanding wave propagation in poroelastic media \cite{morency2010}, and addressing this challenge has led to the development of various computational approaches, including finite difference, finite element, boundary element, finite volume, and spectral methods \cite{gaspar2003, santos1986II, Lemoine2013, chen1995, morency}. More recent approaches have considered high-order space-time continuous and discontinuous Galerkin (DG) methods \cite{antoniettiIMA, bause2024} and mixed finite element methods \cite{Lee2023}.

In this work, we propose a hybridizable discontinuous Galerkin (HDG) method \cite{Cockburn2009} for the low-frequency Biot system. Like DG methods, HDG supports \(hp\)-adaptivity and flexible mesh designs while requiring fewer global degrees of freedom, a significant advantage for computationally demanding problems. Although HDG methods are well established for porous media and elastodynamics \cite{nguyen2011, duSayas2020, meddahi2023hp}, only Hungria \cite{hungria2019} has previously addressed the fully dynamic Biot poroelasticity system. Our approach employs the same variables (fluid pressure, stress tensor, and solid/fluid velocities) but utilizes different numerical traces, with polynomial approximations of degree \(k \geq 0\) for stress and pressure and degree \(k+1\) for velocities and numerical traces. This parameter-free HDG method is applicable in both two and three dimensions, achieving quasi-optimal convergence with respect to mesh size and a sub-optimal rate (off by half a power) with respect to the polynomial degree. We also show that a fully discrete Crank–Nicolson scheme maintains stability and convergence. Finally, we perform a series of numerical experiments that confirm the effectiveness and robustness of our method.

The structure of this paper is as follows. We begin by introducing notation and definitions related to functional spaces. In Section~\ref{sec:model}, we present the linear dynamic poroelastic model problem and its weak formulation in terms of pressure, stress, and fluid and solid velocities. Section~\ref{sec:wellposedness} establishes the existence and uniqueness of the solution. Section~\ref{sec:FE} discusses essential \(hp\) technical requirements necessary for our analysis. The semidiscrete hybridizable discontinuous Galerkin method is introduced in Section~\ref{sec:semi-discrete}, where we also demonstrate its well-posedness. A comprehensive \(hp\) convergence analysis of the HDG method is provided in Section~\ref{sec:convergence}. Section~\ref{sec:fully-discrete} covers the fully discrete scheme. Finally, Section~\ref{sec:numresults} presents numerical results that corroborate the expected convergence rates and applies the proposed method to a geophysical test benchmark.

\subsection{Notations and Sobolev spaces}

For any $m, n \in \mathbb{N}$, we denote by $\mathbb{R}^{m\times n}$ the space of real $m \times n$ matrices. To maintain consistent notation, we identify $\mathbb{R}^{m\times 1}$ with the space of column vectors $\mathbb{R}^{m}$ and $\mathbb{R}^{1\times 1}$ with the scalar field $\mathbb{R}$. For $m > 1$, let $\mathrm{I}_m \in \mathbb{R}^{m \times m}$ be the identity matrix and consider the subspace $\mathbb{R}^{m \times m}_{\text{sym}} \coloneq \{\btau \in \mathbb{R}^{m \times m} \mid \btau = \btau^{\mt}\}$ of symmetric matrices, where $\btau^{\mt} := (\tau_{ji})$ is the transpose of $\btau = (\tau_{ij})$. The component-wise inner product of two matrices $\bsig = (\sigma_{ij})$ and $\btau = (\tau_{ij}) \in \mathbb{R}^{m \times n}$ is given by $\bsig : \btau \coloneq \sum_{i,j} \sigma_{ij} \tau_{ij}$.

Let $D$ be a polyhedral Lipschitz bounded domain of $\bbR^d$ $(d=2,3)$, with boundary $\partial D$. Throughout this work, we apply all differential operators row-wise.  For example, given a tensorial function $\bsig:D\to \mathbb{R}^{d\times d}$ and a vector field $\bu:D\to \bbR^d$, we set the divergence $\bdiv \bsig:D \to \bbR^d$, the gradient $\bnabla \bu:D \to \mathbb{R}^{d\times d}$, and the linearized strain tensor $\beps(\bu) : D \to \mathbb{R}^{d\times d}_{\text{sym}}$ as
\[
	(\bdiv \bsig)_i := \sum_j   \partial_j \sigma_{ij} \,, \quad (\bnabla \bu)_{ij} := \partial_j u_i\,,
	\quad\hbox{and}\quad \beps(\bu) := \frac{1}{2}\left[\bnabla\bu+(\bnabla\bu)^{\mt}\right].
\]
For $s\in \bbR$, $H^s(D, \mathbb{R}^{m\times n})$ represents the usual Hilbertian Sobolev space of functions with domain $D$ and values in $\mathbb{R}^{m\times n}$. In the case $m=n=1$, we simply write $H^s(D)$. The norm of $H^s(D, \mathbb{R}^{m\times n})$ is denoted by $\norm{\cdot}_{s,D}$ and the corresponding semi-norm is written as $|\cdot|_{s,D}$. We adopt the convention  $H^0(D, \mathbb{R}^{m\times n}):=L^2(D,\mathbb{R}^{m\times n})$ and let $(\cdot, \cdot)_D$ be the inner product in $L^2(D, \mathbb{R}^{m\times n})$, i.e.,
\begin{equation*}
	  \inner{\bsig, \btau}_D \coloneq  \int_D \bsig:\btau,\quad \forall \bsig, \btau\in L^2(D,\mathbb{R}^{m\times n}).
\end{equation*} 

The space of vector fields in $L^2(D, \mathbb{R}^d)$ with divergence in $L^2(D)$ is denoted by $H(\div, D)$. The corresponding norm is given by $\norm{\bv}^2_{\div, D}:=\norm{\bv}_{0,D}^2+\norm{\div\bv}^2_{0,D}$. Let $\bn$ be the outward unit normal vector to $\partial D$, the Green formula
\begin{equation}\label{green1}
	\inner{\bv, \nabla q}_D + \inner{\div \bv, q}_D = \int_{\partial D} \bv\cdot\bn q\qquad  \forall q \in H^1(D),
\end{equation}
allows for the extension of the normal trace operator $\bv \to (\bv|_{\partial D})\cdot\bn$ to a linear continuous mapping $(\cdot|_{\partial D})\cdot\bn:\, H(\bdiv, D) \to H^{-\sfrac{1}{2}}(\partial D)$, where $H^{-\sfrac{1}{2}}(\partial D)$ is the dual of $H^{\sfrac{1}{2}}(\partial D)$.

Similarly, we denote by $H(\bdiv, D, \mathbb{R}^{d\times d}_{\text{sym}})$ the space of tensors in $L^2(D, \mathbb{R}^{d\times d}_{\text{sym}})$ with divergence in $L^2(D, \bbR^d)$. Its norm is defined as $\norm{\btau}^2_{\bdiv,D}:=\norm{\btau}_{0,D}^2+\norm{\bdiv\btau}^2_{0,D}$. Again, the Green formula
\begin{equation}\label{green2}
	\inner{\btau, \beps(\bv)}_D + \inner{\bdiv \btau, \bv}_D = \int_{\partial D} \btau\bn\cdot \bv\qquad  \forall \bv \in H^1(D,\bbR^d),
\end{equation}
enables the extension of the normal trace operator $\btau \to (\btau|_{\partial D})\bn$ to a linear continuous mapping $(\cdot|_{\partial D})\bn:\, H(\bdiv, D, \mathbb{R}^{d\times d}_{\text{sym}}) \to H^{-\sfrac{1}{2}}(\partial D, \bbR^d)$.

Since we are dealing with an evolution problem, in addition to the Sobolev spaces defined above, we need to introduce function spaces defined over a bounded time interval $(0, T)$ and taking values in a separable Banach space $V$, whose norm is denoted $\norm{\cdot}_{V}$.  For $1 \leq p \leq\infty$, $L^p_{[0,T]}(V)$ represents the space of Bochner-measurable functions $f: (0,T) \to V$ with finite norms, given by
\[
	\norm{f}^p_{L^p_{[0,T]}(V)}:= \int_0^T\norm{f(t)}_{V}^p\, \text{d}t \quad\hbox{for $1\leq p < \infty$ and} \quad \norm{f}_{L_{[0,T]}^\infty(V)}:= \esssup_{[0, T]} \norm{f(t)}_V.
\]

We denote the Banach space of continuous functions $f: [0,T] \to V$ by $\mathcal{C}^0_{[0,T]}(V)$. Moreover, for any $k \in \mathbb{N}$, $\mathcal{C}^k_{[0,T]}(V)$ represents the subspace of $\mathcal{C}^0_{[0,T]}(V)$ consisting of functions $f$ with strong derivatives $\frac{d^j f}{dt^j}$ in $\mathcal{C}^0_{[0,T]}(V)$ for all $1 \leq j \leq k$. In what follows, we will interchangeably use the notations $\dot{f}:= \frac{d f}{dt}$, $\ddot{f}:= \frac{d^2 f}{dt^2} $, and $\dddot{f}:= \frac{d^3 f}{dt^3} $ to denote the first, second, and third derivatives with respect to $t$. 

Throughout the rest of this paper, we shall use the letter $C$  to denote generic positive constants independent of the mesh size $h$ and the polynomial degree $k$ and the time step $\Delta t$. These constants may represent different values at different occurrences. Moreover, given any positive expressions $X$ and $Y$ depending on $h$ and $k$, the notation $X \,\lesssim\, Y$  means that $X \,\le\, C\, Y$.

\section{Four field formulation of a dynamic and linear poroelastic problem}\label{sec:model}

This section presents the dynamic Biot model for saturated porous media within a polygonal or polyhedral domain \(\Omega \subset \mathbb{R}^d\), where \(d = 2, 3\). The model describes the behavior of a porous material fully saturated with fluid, accounting for the interactions between the solid skeleton and the fluid phase. It is governed by a system of coupled equations that represent the conservation of linear momentum, fluid transport, and mass balance.

The linear momentum balance equation is given by
\begin{equation}\label{momentum}
	\rho_{11} \ddot{\bd}_s + \rho_{12} \dot{\bu}_f -\bdiv(\bsig - \alpha p \mathrm{I}_d) = \bF_s	\quad \text{in $\Omega\times (0, T]$},
\end{equation}
where $\bd_s:\Omega\times [0,T] \to \bbR^d$ represents the skeleton displacement, $\bu_f:\Omega\times [0,T] \to \bbR^d$ denotes the fluid velocity, $p: \Omega \times [0, T] \to \mathbb{R}$ is the pore pressure, and $\bF_s:\Omega\times [0,T] \to \bbR^d$ represents the body force. The coefficient $0< \alpha \leq 1$ is the Biot-Willis parameter. The linearized strain tensor $\beps(\bd_s)$ is linked to the effective stress $\bsig:\Omega\times [0,T] \to \mathbb{R}^{d\times d}_{\text{sym}}$ through Hooke's law:
\begin{equation}\label{Constitutive:solid}
	\bsig  = \cC \beps(\bd_s)  \quad \text{in $\Omega\times (0, T]$},
\end{equation}
where $\cC$ is a symmetric and positive definite fourth-order stiffness tensor.  

Fluid transport follows Darcy's law, simplified here for linearity and isotropy as
\begin{equation}\label{Darcy}
	\rho_{12} \ddot{\bd}_s + \rho_{22} \dot{\bu}_f + \tfrac{\eta}{\kappa} \bu_f + \nabla p = \bF_f	\quad \text{in $\Omega\times (0, T]$},
\end{equation}
where $\bF_f:\Omega\times [0,T] \to \bbR^d$ is a source term and the parameters $\kappa>0$ and $\eta>0$ are the absolute permeability and the dynamic viscosity, respectively. In \eqref{momentum}-\eqref{Darcy}, the coefficients $\rho_{ij}> 0$, $1 \leq i, j \leq 2$, are related to the solid density $\rho_s$, the saturating fluid density $\rho_f$, the porosity $0 < \phi <1$, and the tortuosity $\nu>1$ as follows:
\begin{align}
    \rho_{11} := \phi\rho_f + (1 - \phi)\rho_s, \quad \rho_{12} := \rho_f,\quad \rho_{22} := \frac{\nu}{\phi}\rho_f.
\end{align}
 Finally, mass balance is stated as:
\begin{equation}\label{mass}
	s \dot p + \div \bu_f +\alpha \div \dot\bd_s = g	\quad \text{in $\Omega\times (0, T]$},
\end{equation}
where $s>0$ is the constrained specific storage coefficient and $g:\Omega \to \bbR$ is the source/sink density function of the fluid.  

To fully define the model problem, we complement equations \eqref{momentum}--\eqref{mass} with the initial conditions 
\begin{equation}\label{IC}
	\bd_s(0) = \bd_s^0,\quad \dot\bd_s(0) = \bu_s^0 \quad \bu_f(0) = \bu^0_f,\quad \text{and}\quad p(0) = p^0	\quad \text{in $\Omega$},
\end{equation}
where $\bd^0, \bu_s^0, \bu^0_f :\Omega \to \bbR^d$, and $p^0:\Omega \to \bbR$ are the given initial data for displacement, solid velocity, fluid velocity, and pressure, respectively.

According to our assumption on $\cC$, there exist constants $c^+ > c^- > 0$ such that  
\begin{equation}\label{CD}
c^- \boldsymbol{\zeta}:\boldsymbol{\zeta}\, \leq \cC \boldsymbol{\zeta} :\boldsymbol{\zeta} \leq c^+ \, \boldsymbol{\zeta}:\boldsymbol{\zeta}
\quad \forall \boldsymbol{\zeta}\in \mathbb{R}^{d\times d}_{\text{sym}}.
\end{equation}
Moreover, given the density interaction matrix $R:= \begin{pmatrix}
	\rho_{11} & \rho_{12} \\
	\rho_{12} & \rho_{22}
\end{pmatrix}$, we have that 
\begin{equation}\label{eq:eigenR}
	\rho^- \boldsymbol{\eta}:\boldsymbol{\eta}\, \leq \boldsymbol{\eta}R :\boldsymbol{\eta} \leq \rho^+ \, \boldsymbol{\eta}:\boldsymbol{\eta}
\quad \forall \boldsymbol{\eta}\in \mathbb{R}^{d\times 2},
\end{equation}
where $\rho^+> \rho^- > 0$ are the eigenvalues of $R$.

We formulate the problem in terms of the stress tensor $\bsig$, the pressure $p$, and the velocities $\bu_s:= \dot \bd_s$ and $\bu_f$ of the solid and fluid phases, respectively. Using the notation  $\beta := \eta/\kappa$, equations \eqref{momentum}-\eqref{mass} can be expressed as the following first order system in time:
\begin{subequations}
	\begin{align}
			\rho_{11} \dot{\bu}_s + \rho_{12} \dot{\bu}_f -\bdiv(\bsig - \alpha p \mathrm{I}_d) &= \bF_s	\quad \text{in $\Omega\times (0, T]$},\label{4field-a}
			\\
			\rho_{12} \dot{\bu}_s + \rho_{22} \dot{\bu}_f + \beta \bu_f + \nabla p &= \bF_f	\quad \text{in $\Omega\times (0, T]$},\label{4field-b}
			\\
			\dot\bsig &= \cC \beps(\bu_s)	\quad \text{in $\Omega\times (0, T]$},\label{4field-c}
			\\
			s \dot p + \div \bu_f +\alpha \div \bu_s &= g	\quad \text{in $\Omega\times (0, T]$}.	\label{4field-d}
	\end{align}
\end{subequations}

To impose boundary conditions, we start by defining two disjoint partitions of the boundary $\Gamma:=\partial \Omega$. Specifically, we assume $\Gamma = \Gamma^s_D \cup \Gamma^s_N$ and $\Gamma = \Gamma^f_D \cup \Gamma^f_N$, with both $\Gamma^s_D$ and $\Gamma^f_D$ having positive Lebesgue measures. Let $\bn$ denote the normal unit outward vector on $\Gamma$. We consider the following mixed homogeneous Dirichlet/Neumann boundary conditions:
\begin{equation}\label{BC}
	\begin{aligned}
		\bu_s & = \mathbf 0 \quad \text{on $\Gamma^s_D\times (0, T]$} 
		&\quad (\bsig - \alpha p \mathrm{I}_d)\bn &= \mathbf 0 \quad \text{on $\Gamma^s_N\times (0, T]$},
		\\
        p &= 0 \quad \text{on $\Gamma^f_D\times (0, T]$} &\quad 
		\bu_f\cdot \bn & =  0 \quad \text{on $\Gamma^f_N\times (0, T]$}.
	\end{aligned}
\end{equation}

We derive from \eqref{IC} the following initial conditions for the velocities/stress-pressure formulation \eqref{4field-a}--\eqref{4field-d} of the poroelasticity problem: 
\begin{equation}\label{IC1}
	\bu_s(0) = \bu_s^0,\quad \bu_f(0) = \bu^0_f,\quad \bsig(0) = \bsig^0 \quad \text{and}\quad p(0) = p^0	\quad \text{in $\Omega$},
\end{equation}
where $\bsig^0:= \cC \beps(\bd^0)$.

\section{Well-posedness of the model problem}\label{sec:wellposedness}

In this section, we establish the well-posedness of the model problem \eqref{4field-a}--\eqref{4field-d} with boundary conditions \eqref{BC} and initial conditions \eqref{IC1}, using the theory of strongly continuous semigroups \cite{pazy}.

\subsection{Functional setting}

To facilitate a unified treatment of velocity components, we introduce the notation $\underline{\bu}$ to represent the concatenation of solid and fluid velocities into a single block matrix $\underline{\bu} \coloneq \big[\bu_s \mid \bu_f\big]: \Omega \to \mathbb{R}^{d \times 2}$. This structure allows us to rewrite equations \eqref{4field-a}-\eqref{4field-b} more compactly as
\begin{equation}\label{compact1}
    \big[\dot{\bu}_s \mid \dot{\bu}_f\big]  = \big[\bdiv(\bsig - \alpha p \mathrm{I}_d) \mid - \nabla p - \beta \bu_f \big]R^{-1} + \big[ \bF_s \mid \bF_f \big]R^{-1},
\end{equation}  
where the notation $[\boldsymbol{a} \mid \boldsymbol{b}] \in \mathbb{R}^{d \times 2}$ is generally adopted to represent a $d \times 2$ matrix with columns $\boldsymbol{a}$ and $\boldsymbol{b} \in \mathbb{R}^d$.

In addition, we use the pair $(\bsig, p)$ to collectively denote the stress and pressure variables. It follows from \eqref{4field-c}--\eqref{4field-d} that $(\bsig, p)$ satisfies the evolution equation
\begin{equation}\label{compact2}
	\big(\dot{\bsig}, \dot{p} \big)  + \big( -\mathcal{C} \beps(\bu_s) , \tfrac{1}{s} \left( \div \bu_f +\alpha \div \bu_s \right) \big) = \big(\mathbf 0 , \tfrac{1}{s}g  \big)
	\quad \text{in $\Omega \times (0,T]$ }.
\end{equation}

To establish a suitable framework for formulating the first-order system in time \eqref{compact1}–\eqref{compact2}, we begin by defining the pivot Hilbert spaces $\mathcal{H}_1\coloneq L^2(\Omega, \mathbb R^{d\times 2})$ and $\mathcal{H}_2 \coloneq  L^2(\Omega,\mathbb{R}^{d\times d}_{\text{sym}}) \times   L^2(\Omega)$. We endow $\mathcal{H}_1$ with the inner product $\inner{ \underline{\bu}, \underline{\bv} }_{\mathcal{H}_1} \coloneq  \inner{\underline{\bu} R, \underline{\bv}}_{\Omega}$. According to \eqref{eq:eigenR}, the corresponding norm $\norm{\underline{\bv}}^2_{\mathcal{H}_1} \coloneq  \inner{ \underline{\bv}, \underline{\bv} }_{\mathcal{H}_1}$  satisfies 
\begin{equation}\label{bound:rho}
   \rho^-  (\norm{\bv_s}^2_{0,\Omega} + \norm{\bv_f}_{0,\Omega}^2) \leq 	\norm{\underline{\bv}}^2_{\mathcal{H}_1} \leq \rho^+  (\norm{\bv_s}^2_{0,\Omega} + \norm{\bv_f}_{0,\Omega}^2) \quad \forall \underline{\bv}  =  \big[\bv_s \mid \bv_f\big] \in \mathcal{H}_1.
\end{equation}








Additionally, we define $\mathcal{A} \coloneq  \mathcal{C}^{-1}$, and equip 
$\mathcal{H}_2$ with the inner product  
\[
\inner{ (\bsig,p), (\btau,q) }_{\mathcal{H}_2} \coloneq   \inner{\mathcal{A}\bsig, \btau}_\Omega  + \inner{s p,q }_\Omega,\quad (\bsig,p), (\btau,q) \in \mathcal{H}_2,
\]  
with the associated norm $\norm{(\btau,q) }^2_{\mathcal{H}_2}  \coloneq   \inner{\mathcal{A}\btau, \btau}_\Omega  + \inner{s q,q }_\Omega$. By \eqref{CD}, there exist  constants $a^+ > a^- >0$ such that
\begin{equation}\label{normH}
	a^- \left( \norm{\btau}^2_{0, \Omega}  + \norm{q}^2_{0, \Omega}  \right) \leq \norm{(\btau,q)  }_{\cH_2}^2 \leq a^+ \left( \norm{\btau}^2_{0, \Omega}  + \norm{q}^2_{0, \Omega}  \right)   \quad \forall (\btau,q)    \in \mathcal{H}_2.
\end{equation} 

To incorporate the essential boundary conditions specified in \eqref{BC} within the energy space associated with the fluid velocity component, we introduce the closed subspace
\[
H_N(\div, \Omega) := \set{ \bv\in H(\bdiv, \Omega); \quad \dual{\bv\cdot \bn,q}_{\Gamma}= 0 	\quad \forall q\in H^1_D(\Omega) },
\]
that consists of vector fields in $H(\bdiv, \Omega)$ with a free normal component on $\Gamma^f_N$. Here,  $H^1_D(\Omega):=\set{q\in H^1(\Omega);\ q|_{\Gamma^f_D} = 0}$, and $\dual{\cdot, \cdot}_\Gamma$ represents the duality pairing between $H^{\sfrac12}(\Gamma)$ and $H^{-\sfrac12}(\Gamma)$. 

Similarly, we let $H^1_D(\Omega,\bbR^d):=\set{\bv\in H^1(\Omega,\bbR^d);\ \bv|_{\Gamma^s_D} = \mathbf 0}$ and define
\[
H_N(\bdiv, \Omega, \mathbb{R}^{d\times d}_{\text{sym}}) := \set{ \btau\in H(\bdiv, \Omega, \mathbb{R}^{d\times d}_{\text{sym}}); \quad 
	\left\langle\btau\bn,\bu\right\rangle_{\Gamma}= 0 
	\quad \forall\bu\in H^1_D(\Omega,\bbR^d) },
\]
as the closed subspace of $H(\bdiv, \Omega, \mathbb{R}^{d\times d}_{\text{sym}})$  satisfying a stress--free boundary condition on $\Gamma^s_N$.

The infinitesimal generator $A$ of the strongly continuous semigroup on $\cH_1 \times \cH_2$ associated with the evolution system \eqref{compact1}--\eqref{compact2} is given by
\[
A\left( \underline{\bu} , (\bsig,p)  \right) \coloneq 
\Big( A_1(\bsig,p) + [ \mathbf 0 \mid \beta \bu_f ]R^{-1}, A_2 \underline{\bu} \Big)
\] 
where
\[
A_1(\bsig,p) \coloneq  \big[ -\bdiv(\bsig - \alpha p \mathrm{I}_d) \mid \nabla p  \big] R^{-1}
\quad 
\text{and} 
\quad
A_2\underline{\bu} \coloneq \big(- \mathcal{C} \beps(\bu_s) , \tfrac{1}{s} ( \div \bu_f +\alpha \div \bu_s )  \big).
\]
The domain of the operator $A$ is the subspace  $\mathcal{X}_1 \times \mathcal{X}_2 \subset \mathcal{H}_1 \times \mathcal{H}_2$, where
\[
	\mathcal{X}_1 \coloneq \set{ \underline{\bv}=\big[\bv_s \mid \bv_f\big] ;\  \bv_s \in H_D^1(\Omega,\mathbb{R}^d),\ \bv_f \in H_N(\div,\Omega) }   
\]
and 
\[
	\mathcal{X}_2 \coloneq \set{ (\btau,q) \in L^2(\Omega,\mathbb{R}^{d\times d}_{\text{sym}})\times  H_D^1(\Omega);\quad     \btau - \alpha q \mathrm{I}_d  \in H_N(\bdiv, \Omega, \mathbb{R}^{d\times d}_{\text{sym}})  }
\]
are endowed with the inner products
\[
\inner{\underline{\bu} , \underline{\bv}  }_{\mathcal{X}_1} \coloneq   \inner{\underline{\bu} , \underline{\bv}  }_{\mathcal{H}_1} + \inner{\beps(\bu_s), \beps(\bv_s)}_\Omega + \inner{\tfrac{1}{s}(\div\bu_f + \alpha\div \bu_s), \div\bv_f + \alpha\div \bv_s}_\Omega
\]
and 
\[
	\inner{ (\bsig,p), (\btau,q) }_{\mathcal{X}_2} \coloneq \inner{ (\bsig,p), (\btau,q) }_{\mathcal{H}_2} 
 + \inner*{ \big[ \bdiv(\bsig - \alpha p \mathrm{I}_d ) \mid \nabla q \big]R^{-1} ,   \big[\bdiv(\btau - \alpha q \mathrm{I}_d ) \mid \nabla q \big]  }_\Omega,
\] 
respectively. 

According to our notation, the initial boundary value problem \eqref{4field-a}-\eqref{4field-d} can be written in the following standard form: Find $\underline{\bu}= \big[\bu_s \mid \bu_f\big]: [0,T]\to \mathcal{X}_1$ and $(\bsig,p): [0,T]\to \mathcal{X}_2$ such that  
\begin{align}\label{eq:IVP}
\begin{split}
\frac{\text{d}\underline{\bu} }{\text{d} t}  + A_1(\bsig,p) + \big[ \mathbf 0 \mid \beta \bu_f \big] R^{-1} &= \big[\bF_s \mid \bF_f\big] R^{-1}
\quad \text{in $\Omega \times (0,T]$ } 
\\
\big(\frac{\text{d}\bsig}{\text{d} t}, \frac{\text{d}p}{\text{d} t} \big)  + A_2\underline{\bu} &= \big(\mathbf 0 , \tfrac{1}{s}g  \big)
\quad \text{in $\Omega \times (0,T]$ }
\end{split}
\end{align}
and subject to the  initial conditions
\begin{equation}\label{weakIC}
	\underline{\bu} (0) = \underline{\bu}^0  \quad  \text{and} \quad (\bsig (0), p(0) ) =  (\bsig^0, p^0) \quad \text{in $\Omega$},
\end{equation}
where $\underline{\bu}^0 \coloneq  \big[\bu_s^0 \mid  \bu^0_f \big]$.

\subsection{Application of the Hille-Yosida theory}
Our aim is to prove that the Hille-Yosida theorem can be used to show that the initial boundary value problem \eqref{eq:IVP}--\eqref{weakIC} is well-posed. To achieve this, we first establish that the graph norms defined in $\cX_1$ and $\cX_2$ induce complete topologies.

\begin{proposition}\label{prop:X1Hilbert}
The linear space $\mathcal{X}_1$ endowed with the inner product $\inner{\cdot, \cdot}_{\mathcal{X}_1}$ is a Hilbert space.
\end{proposition}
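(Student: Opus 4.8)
The plan is to show that $\cX_1$ is complete with respect to the norm induced by $\inner{\cdot,\cdot}_{\cX_1}$, since it is by construction an inner product space. Let $(\ubv^{(n)})_n$, with $\ubv^{(n)} = [\bv_s^{(n)} \mid \bv_f^{(n)}]$, be a Cauchy sequence in $\cX_1$. First I would extract from the definition of $\inner{\cdot,\cdot}_{\cX_1}$ and the equivalence \eqref{bound:rho} that $(\bv_s^{(n)})_n$ is Cauchy in $L^2(\Omega,\bbR^d)$ with $(\beps(\bv_s^{(n)}))_n$ Cauchy in $L^2(\Omega,\mathbb{R}^{d\times d}_{\text{sym}})$, while $(\bv_f^{(n)})_n$ is Cauchy in $L^2(\Omega,\bbR^d)$ with the combined quantity $(\div\bv_f^{(n)} + \alpha\div\bv_s^{(n)})_n$ Cauchy in $L^2(\Omega)$. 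Hence there are limits $\bv_s \in L^2(\Omega,\bbR^d)$, $\beps(\bv_s^{(n)}) \to \bchi$ in $L^2$, $\bv_f \in L^2(\Omega,\bbR^d)$, and $\div\bv_f^{(n)} + \alpha\div\bv_s^{(n)} \to w$ in $L^2(\Omega)$.

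Next I would identify the limits as the correct distributional objects. Passing to the limit in the distributional identity $\inner{\beps(\bv_s^{(n)}),\bpi}_\Omega = -\inner{\bv_s^{(n)},\bdiv\bpi}_\Omega$ for $\bpi \in C_0^\infty(\Omega,\mathbb{R}^{d\times d}_{\text{sym}})$ shows $\bchi = \beps(\bv_s)$; by Korn's inequality on $H^1_D(\Omega,\bbR^d)$ (legitimate since $\Gamma^s_D$ has positive measure), the convergence $\bv_s^{(n)} \to \bv_s$ in $L^2$ together with $\beps(\bv_s^{(n)})$ Cauchy upgrades to $\bv_s^{(n)} \to \bv_s$ in $H^1(\Omega,\bbR^d)$, so $\bv_s \in H^1(\Omega,\bbR^d)$ and, the trace operator being continuous, $\bv_s|_{\Gamma^s_D} = \mathbf 0$, i.e.\ $\bv_s \in H^1_D(\Omega,\bbR^d)$. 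In particular $\div\bv_s^{(n)} \to \div\bv_s$ in $L^2(\Omega)$, whence $\div\bv_f^{(n)} = (\div\bv_f^{(n)} + \alpha\div\bv_s^{(n)}) - \alpha\div\bv_s^{(n)} \to w - \alpha\div\bv_s$ in $L^2(\Omega)$; matching this against the distributional definition of the divergence gives $\div\bv_f = w - \alpha\div\bv_s \in L^2(\Omega)$, so $\bv_f \in H(\bdiv,\Omega)$ and $\bv_f^{(n)} \to \bv_f$ in $H(\bdiv,\Omega)$. Since the normal trace map $H(\bdiv,\Omega)\to H^{-1/2}(\Gamma)$ is continuous and $H_N(\div,\Omega)$ is defined by the closed condition $\dual{\bv\cdot\bn,q}_\Gamma = 0$ for all $q \in H^1_D(\Omega)$, the limit satisfies $\bv_f \in H_N(\div,\Omega)$. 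Therefore $\ubv := [\bv_s \mid \bv_f] \in \cX_1$, and combining all the convergences just established with the formula for $\inner{\cdot,\cdot}_{\cX_1}$ yields $\ubv^{(n)} \to \ubv$ in the $\cX_1$-norm.

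The only genuinely delicate point is the upgrade of $\bv_s^{(n)}$ from $L^2$-convergence to $H^1$-convergence: a priori the $\cX_1$-inner product controls only $\beps(\bv_s)$ and not the full gradient, so one must invoke Korn's second inequality on the space $H^1_D(\Omega,\bbR^d)$, which requires that $\Gamma^s_D$ have positive surface measure — an assumption made in the setup. Once this is in hand, everything else is a routine limit-passage in distributional identities and the continuity of trace operators, and no further subtlety arises.
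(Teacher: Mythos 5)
Your proof is correct and follows essentially the same route as the paper: Korn's inequality to upgrade the solid component to $H^1_D(\Omega,\bbR^d)$ convergence, distributional identification of $\div\bv_f$ from the combined quantity $\div\bv_f^{(n)}+\alpha\div\bv_s^{(n)}$, and continuity of the normal trace to conclude $\bv_f\in H_N(\div,\Omega)$; you merely spell out steps the paper leaves implicit. One small remark: since the $\cX_1$-norm already controls $\norm{\bv_s}_{0,\Omega}$, the upgrade to $H^1$-convergence only needs Korn's second inequality on all of $H^1(\Omega,\bbR^d)$, so the positive measure of $\Gamma^s_D$ is not actually required for that particular step (it enters only through the definition of $H^1_D$ as a closed subspace).
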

\begin{proof}
	We only need to prove that $\mathcal{X}_1$ is a Banach space with respect to the norm $\norm{\underline{\bv} }^2_{\mathcal{X}_1} \coloneq \inner{\underline{\bv}, \underline{\bv}  }_{\mathcal{X}_1}$. It is clear that if $\left\{ \underline{\bu}_n \right\} = \left\{[\bu_{s,n} \mid \bu_{f,n}] \right\}$ is a Cauchy sequence with respect to $\norm{\cdot}_{\mathcal{X}_1}$ then
	\begin{subequations}
		\begin{align}
	\bu_{s,n} & \to \bu_s  \quad \text{in $H^1_D(\Omega,\mathbb{R}^d)$ } && (\text{Korn's lemma} ) \label{eq:convH1a}
	\\
	\bu_{f,n} & \to \bu_f \quad \text{in $L^2(\Omega,\mathbb{R}^d)$ } &&  \label{eq:convH1b}
	\\
	\div\bu_{f,n} + \alpha\div \bu_{s,n} & \to w \quad \text{in $L^2(\Omega)$ } &&  \label{eq:convH1c}
		\end{align}
	\end{subequations}
	It follows from \eqref{eq:convH1a} that $\alpha\div \bu_{s,n}  \to \alpha\div \bu_{s} $ in $L^2(\Omega)$. Moreover, \eqref{eq:convH1b} implies that $\div \bu_{f,n}  \to \div \bu_f $ in the space of distributions $D'(\Omega)$ in $\Omega$. By the uniqueness of the limit in $D'(\Omega)$ we deduce that $\div \bu_f = w - \alpha\div \bu_{s} \in L^2(\Omega)$. This proves that $\bu_{f,n}  \to \bu_f$ in $H(\div,\Omega)$ and the continuity of the normal trace operator on $H(\div,\Omega)$ ensures that $\bu_f \in H_N(\div,\Omega)$. We conclude that $\set{\underline{\bu}_n}$ converges to $\underline{\bu} = \big[\bu_s \mid \bu_f\big]$ in $\mathcal{X}_1$, and the result follows.   
\end{proof}

\begin{proposition}\label{prop:X2Hilbert}
	The linear space $\mathcal{X}_2$ endowed with the inner product $\inner{\cdot, \cdot}_{\mathcal{X}_2}$ is a Hilbert space.
\end{proposition}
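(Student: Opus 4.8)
The plan is to follow the same pattern as in Proposition~\ref{prop:X1Hilbert}. First I note that $\inner{\cdot,\cdot}_{\mathcal{X}_2}$ is genuinely an inner product: it is the sum of $\inner{\cdot,\cdot}_{\mathcal{H}_2}$ and a positive semidefinite symmetric bilinear form, and the former is already a norm-generating inner product on $\mathcal{H}_2$ by virtue of \eqref{normH}. Hence it only remains to prove that $\mathcal{X}_2$ is complete for $\norm{\cdot}_{\mathcal{X}_2}$. Since $R^{-1}$ is symmetric and positive definite, with spectrum bounded above and below through \eqref{eq:eigenR}, the additional quadratic term in $\inner{\cdot,\cdot}_{\mathcal{X}_2}$ is equivalent to $\norm{\bdiv(\btau-\alpha q\mathrm{I}_d)}_{0,\Omega}^2+\norm{\nabla q}_{0,\Omega}^2$. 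Combining this with \eqref{normH}, a Cauchy sequence $\{(\btau_n,q_n)\}\subset\mathcal{X}_2$ produces limits
\begin{align*}
\btau_n &\to \btau \ \text{in}\ L^2(\Omega,\mathbb{R}^{d\times d}_{\text{sym}}), &
q_n &\to q \ \text{in}\ L^2(\Omega), \\
\bdiv(\btau_n-\alpha q_n\mathrm{I}_d) &\to \bchi \ \text{in}\ L^2(\Omega,\bbR^d), &
\nabla q_n &\to \bg \ \text{in}\ L^2(\Omega,\bbR^d).
\end{align*}

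Next I would identify these limits and recover the defining properties of $\mathcal{X}_2$. From $q_n\to q$ in $L^2(\Omega)$ one gets $\nabla q_n\to\nabla q$ in $D'(\Omega)$, so uniqueness of the distributional limit yields $\bg=\nabla q$; hence $q\in H^1(\Omega)$ with $q_n\to q$ in $H^1(\Omega)$, and continuity of the trace map $H^1(\Omega)\to H^{1/2}(\Gamma)$ passes the constraint $q_n|_{\Gamma^f_D}=0$ to the limit, i.e. $q\in H^1_D(\Omega)$. Likewise $\btau_n-\alpha q_n\mathrm{I}_d\to\btau-\alpha q\mathrm{I}_d$ in $L^2(\Omega,\mathbb{R}^{d\times d}_{\text{sym}})$ and therefore in $D'(\Omega)$, so $\bchi=\bdiv(\btau-\alpha q\mathrm{I}_d)\in L^2(\Omega,\bbR^d)$; this shows $\btau-\alpha q\mathrm{I}_d\in H(\bdiv,\Omega,\mathbb{R}^{d\times d}_{\text{sym}})$ with convergence in that norm, and continuity of the normal trace $(\cdot)\bn:\,H(\bdiv,\Omega,\mathbb{R}^{d\times d}_{\text{sym}})\to H^{-1/2}(\Gamma,\bbR^d)$ preserves the stress-free condition, giving $\btau-\alpha q\mathrm{I}_d\in H_N(\bdiv,\Omega,\mathbb{R}^{d\times d}_{\text{sym}})$. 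Thus $(\btau,q)\in\mathcal{X}_2$ and $(\btau_n,q_n)\to(\btau,q)$ in $\mathcal{X}_2$, which proves completeness.

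There is no deep obstacle here; the argument is the natural analogue of the one for $\mathcal{X}_1$, and Korn's inequality (invoked in Proposition~\ref{prop:X1Hilbert}) is not needed. The one point that requires care is that the divergence is controlled only on the combined tensor $\btau-\alpha q\mathrm{I}_d$, not on $\btau$ alone, since $\btau$ by itself need not lie in $H(\bdiv,\Omega,\mathbb{R}^{d\times d}_{\text{sym}})$; consequently both the identification of the limiting divergence and the verification of the boundary condition in $H_N(\bdiv,\Omega,\mathbb{R}^{d\times d}_{\text{sym}})$ must be carried out for that combination, after the $H^1$-convergence of $q_n$ has been established.
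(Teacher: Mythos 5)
Your proof is correct and follows essentially the same route as the paper: use the spectral bounds on $R$ to control $\bdiv(\btau_n-\alpha q_n\mathrm{I}_d)$ and $\nabla q_n$, pass to limits in $L^2$, identify the limiting divergence/gradient distributionally, and invoke continuity of the (normal) trace to recover membership in $H^1_D(\Omega)$ and $H_N(\bdiv,\Omega,\mathbb{R}^{d\times d}_{\text{sym}})$. The only cosmetic difference is that the paper asserts $q_n\to q$ in $H^1_D(\Omega)$ directly from completeness of that closed subspace, whereas you reconstruct $q\in H^1$ via the distributional identification of $\nabla q$ first; your closing remark about controlling only the combined tensor $\btau-\alpha q\mathrm{I}_d$ matches the paper's treatment.
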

\begin{proof}
It follows from \eqref{eq:eigenR} that 
	\[
	\frac{1}{\rho^+} \left\{ \norm{\bdiv(\btau - \alpha q \mathrm{I}_d )}^2_{0,\Omega} + \norm{\nabla q}_{0,\Omega}^2   \right\} \leq  \inner*{ [\bdiv(\btau - \alpha q \mathrm{I}_d ) \mid \nabla q] R^{-1} ,  [\bdiv(\btau - \alpha q \mathrm{I}_d ) \mid \nabla q] }_\Omega
	\]
for all $(\btau,q) \in \mathcal{X}_2$. Therefore, if $\set{(\bsig_n, p_n)}$ is a Cauchy sequence in $(\mathcal{X}_2,\norm{\cdot  }_{\mathcal{X}_2} )$, with $\norm{(\btau,q) }^2_{\mathcal{X}_2} \coloneq \inner{(\btau,q), (\btau,q)  }_{\mathcal{X}_2}$, then $\set{p_n}$ converges to $p$ in $H^1_D(\Omega)$, $\set{\bsig_n}$ converges to $\bsig$ in $L^2(\Omega, \mathbb{R}^{d\times d}_{\text{sym}})$, and $\set{\bdiv(\bsig_n - \alpha p_n \mathrm{I}_d)}$ converges to $\mathbf{w}$ in $L^2(\Omega,\mathbb{R}^d)$. Furthermore,  $\set{\bsig_n - \alpha p_n \mathrm{I}_d}$ converges to $\bsig - \alpha p \mathrm{I}_d$ in $L^2(\Omega, \mathbb{R}^{d\times d}_{\text{sym}})$, implying that $\set{\bdiv(\bsig_n - \alpha p_n \mathrm{I}_d)}$ converges to $\bdiv(\bsig - \alpha p \mathrm{I}_d)$ in $D'(\Omega,\mathbb{R}^d)$.  Thus, $\bdiv(\bsig - \alpha p \mathrm{I}_d)= \mathbf{w}  \in  L^2(\Omega,\mathbb{R}^d)$, which ensures the convergence of $\set{\bsig_n - \alpha p_n \mathrm{I}_d}$ to $\bsig - \alpha p \mathrm{I}_d$ in $H_N(\bdiv,\Omega,\mathbb{R}^{d\times d}_{\text{sym}})$. This completes the proof. 
\end{proof}

Establishing the following property of the operator $A: \mathcal{X}_1 \times \mathcal{X}_2 \to  \mathcal{H}_1 \times \mathcal{H}_2$ is essential for the application of the Hille-Yosida theorem: 

\begin{lemma}\label{lem:maximalMonotone}
The linear operator $A: \mathcal{X}_1 \times \mathcal{X}_2 \to  \mathcal{H}_1 \times \mathcal{H}_2$ is maximal monotone. 
\end{lemma}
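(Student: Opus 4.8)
The plan is to verify the two defining properties of a maximal monotone operator: that $A$ is monotone, and that $\mathrm{Id}+A:\cX_1\times\cX_2\to\cH_1\times\cH_2$ is onto. For monotonicity, I would expand $\big\langle A\big(\underline{\bu},(\bsig,p)\big),\big(\underline{\bu},(\bsig,p)\big)\big\rangle_{\cH_1\times\cH_2}$ for $\big(\underline{\bu},(\bsig,p)\big)\in\cX_1\times\cX_2$, using the definitions of the two inner products together with the identities $R^{-1}R=\mathrm{Id}$ and $\cA\cC=\mathrm{Id}$. This collapses the expression to
\[
-\inner{\bdiv(\bsig-\alpha p\mathrm{I}_d),\bu_s}_\Omega+\inner{\nabla p,\bu_f}_\Omega+\beta\norm{\bu_f}_{0,\Omega}^2-\inner{\beps(\bu_s),\bsig}_\Omega+\inner{\div\bu_f+\alpha\div\bu_s,p}_\Omega.
\]
Applying the Green formulas \eqref{green1}--\eqref{green2} to the first two terms transfers the derivatives onto $\bu_s$ and $p$; the boundary contributions $\dual{(\bsig-\alpha p\mathrm{I}_d)\bn,\bu_s}_\Gamma$ and $\dual{\bu_f\cdot\bn,p}_\Gamma$ vanish by the very definitions of $H_N(\bdiv,\Omega,\bbS)$, $H_N(\div,\Omega)$, $H^1_D(\Omega,\bbR^d)$ and $H^1_D(\Omega)$ that enter $\cX_1$ and $\cX_2$. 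The symmetric stress--strain terms then cancel against each other and the two pressure--divergence pairings cancel as well, leaving exactly $\beta\norm{\bu_f}_{0,\Omega}^2\ge 0$; thus $A$ is monotone, the only surviving dissipation being the Darcy term.

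For surjectivity, fix data $\big(\underline{\bF},(\btau_0,q_0)\big)\in\cH_1\times\cH_2$, with $\underline{\bF}=[\bF_1\mid\bF_2]\in\cH_1$. The second component of $(\mathrm{Id}+A)\big(\underline{\bu},(\bsig,p)\big)=\big(\underline{\bF},(\btau_0,q_0)\big)$ is algebraic and gives $\bsig=\btau_0+\cC\beps(\bu_s)$ and $p=q_0-\tfrac1s(\div\bu_f+\alpha\div\bu_s)$. Substituting these (formally) into the first component, tested against $\underline{\bv}\in\cX_1$ in $\cH_1$ with the derivatives integrated by parts, leads to the following problem, which makes sense in $\cX_1$ alone: find $\underline{\bu}=[\bu_s\mid\bu_f]\in\cX_1$ such that $a(\underline{\bu},\underline{\bv})=\ell(\underline{\bv})$ for all $\underline{\bv}=[\bv_s\mid\bv_f]\in\cX_1$, where
\[
a(\underline{\bu},\underline{\bv}):=\inner{\underline{\bu},\underline{\bv}}_{\cH_1}+\inner{\cC\beps(\bu_s),\beps(\bv_s)}_\Omega+\beta\inner{\bu_f,\bv_f}_\Omega+\tfrac1s\inner{\div\bu_f+\alpha\div\bu_s,\div\bv_f+\alpha\div\bv_s}_\Omega
\]
and $\ell(\underline{\bv}):=\inner{\underline{\bF},\underline{\bv}}_{\cH_1}-\inner{\btau_0-\alpha q_0\mathrm{I}_d,\beps(\bv_s)}_\Omega+\inner{q_0,\div\bv_f}_\Omega$. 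The form $a$ is symmetric and bounded on $\cX_1$, and by \eqref{CD} and \eqref{bound:rho} it is coercive there directly, since $\norm{\beps(\bu_s)}_{0,\Omega}^2$ already appears in $\norm{\cdot}_{\cX_1}^2$; the functional $\ell$ is bounded because $\norm{\div\bv_s}_{0,\Omega}\le\sqrt d\,\norm{\beps(\bv_s)}_{0,\Omega}$ lets the $\cX_1$-norm control $\norm{\div\bv_f}_{0,\Omega}$ as well. The Lax--Milgram lemma (or the Riesz theorem) then produces a unique $\underline{\bu}\in\cX_1$, and I set $\bsig:=\btau_0+\cC\beps(\bu_s)\in L^2(\Omega,\bbS)$ and $p:=q_0-\tfrac1s(\div\bu_f+\alpha\div\bu_s)\in L^2(\Omega)$.

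It remains to show that the pair $(\bsig,p)$ constructed this way actually lies in $\cX_2$, which I expect to be the main obstacle: the variational step only delivers $\underline{\bu}\in\cX_1$, and one must extract from the weak identity both the missing regularity and the essential boundary conditions of $\bsig$ and $p$ before $\big(\underline{\bu},(\bsig,p)\big)$ can be certified to be in $\mathrm{dom}(A)=\cX_1\times\cX_2$. With $\bsig,p$ as above, the variational identity reads $\inner{\underline{\bu},\underline{\bv}}_{\cH_1}+\inner{\bsig-\alpha p\mathrm{I}_d,\beps(\bv_s)}_\Omega-\inner{p,\div\bv_f}_\Omega+\beta\inner{\bu_f,\bv_f}_\Omega=\inner{\underline{\bF},\underline{\bv}}_{\cH_1}$ for all $\underline{\bv}\in\cX_1$. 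Testing with $\bv_f=\mathbf 0$ and $\bv_s\in\cC_c^\infty(\Omega,\bbR^d)$ gives $\bdiv(\bsig-\alpha p\mathrm{I}_d)\in L^2(\Omega,\bbR^d)$, and with $\bv_s=\mathbf 0$ and $\bv_f\in\cC_c^\infty(\Omega,\bbR^d)$ gives $\nabla p\in L^2(\Omega,\bbR^d)$; hence $p\in H^1(\Omega)$ and $\bsig-\alpha p\mathrm{I}_d\in H(\bdiv,\Omega,\bbS)$. With this regularity available I integrate by parts back in the identity: the interior parts reproduce the strong first equation (so that $(\mathrm{Id}+A)\big(\underline{\bu},(\bsig,p)\big)=\big(\underline{\bF},(\btau_0,q_0)\big)$, the second equation holding by construction), and what is left is $\dual{(\bsig-\alpha p\mathrm{I}_d)\bn,\bv_s}_\Gamma-\dual{p,\bv_f\cdot\bn}_\Gamma=0$ for all $[\bv_s\mid\bv_f]\in\cX_1$. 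Taking $\bv_f=\mathbf 0$ and $\bv_s$ ranging over $H^1_D(\Omega,\bbR^d)$ yields $\bsig-\alpha p\mathrm{I}_d\in H_N(\bdiv,\Omega,\bbS)$; taking $\bv_s=\mathbf 0$, $\bv_f$ ranging over $H_N(\div,\Omega)$, and using the surjectivity of the normal trace onto the trace space attached to $\Gamma^f_D$ yields $p|_{\Gamma^f_D}=0$, i.e. $p\in H^1_D(\Omega)$. Therefore $(\bsig,p)\in\cX_2$, $\mathrm{Id}+A$ is onto, and $A$ is maximal monotone.
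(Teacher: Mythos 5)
Your proof is correct, and it matches the paper in most respects: the monotonicity argument (expand the duality pairing, apply the Green formulas \eqref{green1}--\eqref{green2}, let the boundary terms vanish by the definitions of $\cX_1$ and $\cX_2$, and be left with $\beta\norm{\bu_f}_{0,\Omega}^2\ge 0$) is the paper's argument verbatim, and your coercive problem $a(\underline{\bu},\underline{\bv})=\ell(\underline{\bv})$ on $\cX_1$ is exactly the paper's first variational problem \eqref{eq:var2}, except that you keep the weights $\cC$ and $\tfrac1s$ explicit where the paper writes the form with the $\cX_1$ inner product (your version is the more scrupulous one; both are coercive and bounded on $\cX_1$ by \eqref{CD}). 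The genuine divergence is in how $(\bsig^*,p^*)\in\cX_2$ is obtained. The paper avoids any trace analysis: it tests the algebraic relation \eqref{eq:surjective1b} against arbitrary $(\btau,q)\in\cX_2$ in the $\cH_2$ inner product, integrates by parts, and arrives at a second coercive problem \eqref{eq:var3} posed directly in the Hilbert space $\cX_2$ (Proposition~\ref{prop:X2Hilbert} plus Lax--Milgram), so membership in $\cX_2$ comes for free from the function space in which the problem is solved. You instead define $\bsig$ and $p$ pointwise from the algebraic relation, recover $\bdiv(\bsig-\alpha p\mathrm{I}_d)\in L^2(\Omega,\bbR^d)$ and $\nabla p\in L^2(\Omega,\bbR^d)$ by testing with compactly supported fields, and then identify the essential boundary conditions ($\bsig-\alpha p\mathrm{I}_d\in H_N(\bdiv,\Omega,\bbS)$ and $p\in H^1_D(\Omega)$) by integrating back and using the surjectivity of the normal trace; the last step does require constructing, for data supported in $\Gamma^f_D$, a lifting that lies in $H_N(\div,\Omega)$, a point you only gesture at but which is standard. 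The trade-off: your route is more hands-on and makes the verification that the strong equations \eqref{eq:surjective1a}--\eqref{eq:surjective1b} actually hold completely explicit, whereas the paper's dual-problem device is shorter and sidesteps the $H^{-\sfrac12}$ trace technicalities, at the cost of leaving the final consistency check ("solving \eqref{eq:var2} and \eqref{eq:var3} successively yields an inverse image") more implicit.
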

\begin{proof}
By definition, for all $(\underline{\bu} , (\bsig,p) ) \in \mathcal{X}_1 \times \mathcal{X}_2$ it holds that 
\begin{align}\label{eq:monotone0}
\begin{split}
	\inner{ A\left( \underline{\bu} , (\bsig,p)  \right),  \left( \underline{\bu} , (\bsig,p)  \right) }_{\mathcal{H}_1 \times \mathcal{H}_2} &=
	\inner{\big[ -\bdiv(\bsig - \alpha p \mathrm{I}_d) \mid \nabla p + \beta \bu_f \big] R^{-1}, \underline{\bu}}_{\mathcal{H}_1} 
	\\
	&\qquad + \inner{ (-\mathcal{C} \beps(\bu_s) , \tfrac{1}{s} ( \div \bu_f +\alpha \div \bu_s ) ), (\bsig,p) }_{\mathcal{H}_2} 
	\\
	& = - \inner{\bdiv(\bsig - \alpha p \mathrm{I}_d), \bu_s }_\Omega + \inner{\nabla p + \beta \bu_f, \bu_f}_\Omega   
	\\
	& \quad \quad - \inner{\beps(\bu_s), \bsig}_\Omega + \inner{\div \bu_f +\alpha \div \bu_s, p}_\Omega. 
\end{split}
\end{align}
We notice that $\inner{\alpha \div \bu_s, p} = \inner{\beps(\bu_s), \alpha p \mathrm{I}_d}$ and apply Green's formulas \eqref{green1}-\eqref{green2} to deduce that 
\begin{align}\label{eq:monotone1}
	\begin{split}
		\inner{ A\left( \underline{\bu} , (\bsig,p)  \right),  \left( \underline{\bu} , (\bsig,p)  \right) }_{\mathcal{H}_1 \times \mathcal{H}_2} 
		& = - \inner{\bdiv(\bsig - \alpha p \mathrm{I}_d), \bu_s }_\Omega - \inner{\bsig - \alpha p\mathrm{I}_d, \beps(\bu_s)}_\Omega    
		\\
		&  \, + \inner{\bu_f, \nabla p}_\Omega + \inner{\div \bu_f , p}_\Omega + \inner{\beta \bu_f, \bu_f}_\Omega= \inner{\beta \bu_f,\bu_f}_\Omega \geq 0,
	\end{split}
	\end{align}
which proves that $A: \mathcal{X}_1 \times \mathcal{X}_2 \to  \mathcal{H}_1 \times \mathcal{H}_2$ is monotone. 

It remains to show that the operator $I_{\mathcal{X}_1 \times \mathcal{X}_2} + A: \mathcal{X}_1 \times \mathcal{X}_2 \to  \mathcal{H}_1 \times \mathcal{H}_2$ is surjective, where $I_{\mathcal{X}_1 \times \mathcal{X}_2}$ represents the identity operator in $\mathcal{X}_1 \times \mathcal{X}_2$. Given $\left( \underline{\bu} ,(\bsig,p)  \right)\in  \mathcal{H}_1 \times \mathcal{H}_2$, we should find $\left( \underline{\bu}^* ,(\bsig^*,p^*)  \right)\in  \mathcal{X}_1 \times \mathcal{X}_2$ satisfying 
\[
\left( I_{\mathcal{X}_1 \times \mathcal{X}_2} + A \right) \left( \underline{\bu}^* ,(\bsig^*,p^*)  \right) = \left( \underline{\bu} ,(\bsig,p)  \right).
\]
In other words, $\underline{\bu}^* = \big[\bu_s^* \mid \bu_f^* \big] \in  \mathcal{X}_1$ and $(\bsig^*,p^*) \in   \mathcal{X}_2$ must solve
\begin{subequations} 
\begin{align}
\underline{\bu}^* &=  \big[ \bdiv(\bsig^* - \alpha p^* \mathrm{I}_d) \mid -\nabla p^* - \beta \bu_f^*  \big] R^{-1} +  \underline{\bu}\label{eq:surjective1a}
\\
 (\bsig^*,p^*) &= \big(\mathcal{C}\beps(u_s^*), - \tfrac{1}{s} ( \div \bu_f^* +\alpha \div \bu_s^* )\big)  +  (\bsig,p)  \label{eq:surjective1b}
\end{align}
\end{subequations}
Taking the  $\mathcal{H}_1$-inner product of \eqref{eq:surjective1a} with an arbitrary $\underline{\bv} \in \mathcal{X}_1$, using \eqref{green1}-\eqref{green2}, and rearranging terms we obtain  
\begin{equation}\label{eq:var1}
\inner{\underline{\bu}^* , \underline{\bv} }_{\mathcal{H}_1} + \inner*{ (\bsig^*,p^*) , \big( \beps(\bv_s) , - (\div \bv_f + \alpha \div \bv_s) \big)}_\Omega + \inner{\beta \bu_f^*, \bv_f}_\Omega =  \inner{\underline{\bu} , \underline{\bv} }_{\mathcal{H}_1}.  
\end{equation}
Substituting back \eqref{eq:surjective1b}  in \eqref{eq:var1}  we deduce that $\underline{\bu}^*=[\bu_s^* \mid \bu_f^*] \in \mathcal{X}_1$ solves 
\begin{equation}\label{eq:var2}
	\inner{\underline{\bu}^* , \underline{\bv} }_{\mathcal{X}_1}  + \inner{\beta \bu_f^*, \bv_f}_\Omega =  \inner{\underline{\bu} , \underline{\bv} }_{\mathcal{H}_1} 
	- \inner{\bsig, \beps(\bv_s)}_\Omega 
	+ \inner{ p , \div \bv_f + \alpha \div \bv_s }_\Omega,
\end{equation}
for all $\underline{\bv} =[\bv_s \mid \bv_f]  \in \mathcal{X}_1$. The well-posedness of problem \eqref{eq:var2} is a consequence of Proposition~\ref{prop:X1Hilbert} and the Lax-Milgram lemma. 

We can now use \eqref{eq:surjective1b} to express $(\bsig^*, p^*)$ in terms of $\underline{\bu}^*$. However, the resulting expression does not guarantee that $(\bsig^*, p^*) \in \mathcal{X}_2$. By employing a dual procedure to the first step and taking the $\mathcal{H}_2$-inner product of \eqref{eq:surjective1b} with an arbitrary $(\btau, q) \in \mathcal{X}_2$, we obtain:
\[
\inner{(\bsig^*,p^*), (\btau , q )}_{\mathcal{H}_2} - \inner{\beps(\bu_s^*), \btau - \alpha q \mathrm{I}_d}_\Omega + \inner{\div \bu_f^*, q}_\Omega =   \inner{(\bsig, p), (\btau , q )}_{\mathcal{H}_2}.
\]  
Using \eqref{green1}-\eqref{green2} we deduce that 
\[
	\inner{(\bsig^*,p^*), (\btau , q )}_{\mathcal{H}_2} + \inner*{ \underline{\bu} ^* , \big[ \bdiv(\btau - \alpha q \mathrm{I}_d ) \mid -\nabla q\big] }_\Omega =   \inner{(\bsig, p), (\btau , q )}_{\mathcal{H}_2}.
\]
Combining now \eqref{eq:surjective1a} with the last equation shows that $(\bsig^*, p^*) \in \mathcal{X}_2$ satisfies the variational problem 
\begin{equation}\label{eq:var3}
	\inner{(\bsig^*,p^*), (\btau , q )}_{\mathcal{X}_2} = \inner{(\bsig, p), (\btau , q )}_{\mathcal{H}_2} - \inner{  \big[\mathbf 0 \mid \beta \bu_f^*\big] R^{-1}, \big[ \bdiv(\btau - \alpha q \mathrm{I}_d ) \mid \nabla q\big]}_\Omega \quad  \forall  (\btau , q )  \in \mathcal{X}_2,
\end{equation}
whose well-posedness is a consequence of Proposition~\ref{prop:X2Hilbert} and the Lax-Milgram lemma. 

We have then shown that solving successively problems \eqref{eq:var2} and \eqref{eq:var3} provides an inverse image $\left( \underline{\bu}^* ,(\bsig^*,p^*)  \right)\in  \mathcal{X}_1 \times \mathcal{X}_2$ of $\left( \underline{\bu},(\bsig, p)  \right)\in  \mathcal{H}_1 \times \mathcal{H}_2$ under $I_{\mathcal{X}_1 \times \mathcal{X}_2} + A$, which proves the result. 
\end{proof}

\begin{remark}\label{rem:density}
It is worth noting that Lemma~\ref{lem:maximalMonotone} ensures that $\mathcal{X}_1 \times \mathcal{X}_2$ is a dense subspace of $\mathcal{H}_1 \times \mathcal{H}_2$, see \cite[Lemma 76.4]{ErnBook2021III}.
\end{remark}

We are now in a position to provide the main result of this section. 
\begin{theorem}\label{thm:Hille-Yosida}
For all $\underline{\bF} := [\bF_s \mid \bF_f]  \in \mathcal{C}^1_{[0,T]}(L^2(\Omega, \mathbb{R}^{d\times 2}))$, $g \in \mathcal{C}^1_{[0,T]}(L^2(\Omega))$, $\underline{\bu}^0 \in \mathcal{X}_1$, and $(\bsig^0,p^0) \in \mathcal{X}_2$, there exist unique $\underline{\bu} \in \mathcal{C}^1_{[0,T]}(\mathcal{H}_1) \cap \mathcal{C}^0_{[0,T]}(\mathcal{X}_1)$ and $(\bsig,p) \in \mathcal{C}^1_{[0,T]}(\mathcal{H}_2) \cap \mathcal{C}^0_{[0,T]}(\mathcal{X}_2)$ solutions to the initial boundary value problem \eqref{eq:IVP}--\eqref{weakIC}. Moreover, there exists a constant $C>0$ such that  
\begin{equation}\label{eq:stab}
\max_{[0,T]}\norm{\underline{\bu}(t)}_{\mathcal{H}_1} + \max_{[0,T]}\norm{(\bsig, p)(t) }_{\mathcal{H}_2} \leq C \left( \max_{[0,T]}\norm{\underline{\bF}(t)}_{0,\Omega} + \max_{[0,T]}\norm{g(t)}_{0,\Omega}   \right) +   \norm{\underline{\bu}^0}_{\mathcal{H}_1} + \norm{(\bsig^0, p^0)} _{\mathcal{H}_2}.
\end{equation}
\end{theorem}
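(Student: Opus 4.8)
The strategy is to cast \eqref{eq:IVP}--\eqref{weakIC} as an abstract inhomogeneous Cauchy problem in the product Hilbert space $\mathcal{H} := \mathcal{H}_1 \times \mathcal{H}_2$ and invoke the Hille--Yosida theory for semigroups of contractions. Writing $U := (\underline{\bu}, (\bsig, p))$, $U_0 := (\underline{\bu}^0, (\bsig^0, p^0))$, and
\[
F := \big( [\bF_s \mid \bF_f] R^{-1}, (\mathbf 0, \tfrac{1}{s} g) \big),
\]
the problem reads $\dot U + A U = F$ on $(0,T]$ together with $U(0) = U_0$. Since $R^{-1}$ and $1/s$ are constant, the hypotheses $\underline{\bF} \in \mathcal{C}^1_{[0,T]}(L^2(\Omega,\mathbb{R}^{d\times 2}))$ and $g \in \mathcal{C}^1_{[0,T]}(L^2(\Omega))$ give $F \in \mathcal{C}^1_{[0,T]}(\mathcal{H})$, and by assumption $U_0 \in \mathcal{X}_1 \times \mathcal{X}_2 = D(A)$.

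First I would record that $\mathcal{H}$ is a Hilbert space and that, by Lemma~\ref{lem:maximalMonotone} together with Propositions~\ref{prop:X1Hilbert}--\ref{prop:X2Hilbert} and Remark~\ref{rem:density}, the operator $A$ with domain $D(A) = \mathcal{X}_1 \times \mathcal{X}_2$ is maximal monotone and densely defined. Hence $-A$ is the infinitesimal generator of a $\mathcal{C}^0$-semigroup of contractions on $\mathcal{H}$. Applying the standard existence result for inhomogeneous Cauchy problems with a $\mathcal{C}^1$ source term and an initial datum in the domain (see \cite{pazy} or \cite[Chap.~76]{ErnBook2021III}) then yields a unique classical solution $U \in \mathcal{C}^1_{[0,T]}(\mathcal{H}) \cap \mathcal{C}^0_{[0,T]}(D(A))$. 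Unravelling the product structure gives precisely $\underline{\bu} \in \mathcal{C}^1_{[0,T]}(\mathcal{H}_1) \cap \mathcal{C}^0_{[0,T]}(\mathcal{X}_1)$ and $(\bsig, p) \in \mathcal{C}^1_{[0,T]}(\mathcal{H}_2) \cap \mathcal{C}^0_{[0,T]}(\mathcal{X}_2)$.

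For the stability bound \eqref{eq:stab}, I would test the abstract equation with $U(t)$ in $\mathcal{H}$: since the solution is classical, $t\mapsto \norm{U(t)}_{\mathcal{H}}^2$ is $\mathcal{C}^1$ and $\tfrac12 \tfrac{d}{dt}\norm{U(t)}_{\mathcal{H}}^2 + \inner{AU(t), U(t)}_{\mathcal{H}} = \inner{F(t), U(t)}_{\mathcal{H}}$; the monotonicity identity \eqref{eq:monotone1} discards the middle term, leaving $\tfrac{d}{dt}\norm{U(t)}_{\mathcal{H}} \leq \norm{F(t)}_{\mathcal{H}}$ at points where $U(t) \neq 0$ (the estimate being trivial otherwise). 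Integrating from $0$ to $t$ gives $\norm{U(t)}_{\mathcal{H}} \leq \norm{U_0}_{\mathcal{H}} + T \max_{[0,T]}\norm{F}_{\mathcal{H}}$. It then remains to translate norms: $\norm{U}_{\mathcal{H}}^2 = \norm{\underline{\bu}}_{\mathcal{H}_1}^2 + \norm{(\bsig,p)}_{\mathcal{H}_2}^2$, while $\norm{F}_{\mathcal{H}} \lesssim \norm{\underline{\bF}}_{0,\Omega} + \norm{g}_{0,\Omega}$ and $\norm{U_0}_{\mathcal{H}} \leq \norm{\underline{\bu}^0}_{\mathcal{H}_1} + \norm{(\bsig^0,p^0)}_{\mathcal{H}_2}$ follow from \eqref{bound:rho}, \eqref{normH}, the boundedness of $R^{-1}$, and $s>0$. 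Collecting the constants yields \eqref{eq:stab}.

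The argument presents no serious obstacle, since the substantive work has been front-loaded into Propositions~\ref{prop:X1Hilbert}--\ref{prop:X2Hilbert} and Lemma~\ref{lem:maximalMonotone}; the only points needing a little care are checking that the constant factors $R^{-1}$ and $1/s$ do not spoil the $\mathcal{C}^1$-in-time regularity required by the inhomogeneous Hille--Yosida theorem, and justifying the energy identity — which is legitimate precisely because the solution produced is classical, so the differentiation of $\norm{U(t)}_{\mathcal{H}}^2$ is valid.
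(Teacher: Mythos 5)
Your proposal is correct and follows essentially the same route as the paper: the paper simply invokes \cite[Theorem 76.7]{ErnBook2021III} after the groundwork of Propositions~\ref{prop:X1Hilbert}--\ref{prop:X2Hilbert}, Lemma~\ref{lem:maximalMonotone}, and Remark~\ref{rem:density}, and what you write out is precisely the standard Hille--Yosida/contraction-semigroup argument behind that citation, including the energy estimate obtained from monotonicity. The only cosmetic remark is that your final norm-splitting gives a harmless constant factor in front of the initial-data terms rather than the exact coefficient $1$ displayed in \eqref{eq:stab}, which does not affect the substance.
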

\begin{proof}
See \cite[Theorem 76.7]{ErnBook2021III} for a detailed proof.
\end{proof}

In the following sections, we will develop a finite element discretization method based on the following weak form of \eqref{eq:IVP}: Find $\underline{\bu} = [\bu_s|\bu_f] \in \mathcal{C}_{[0,T]}^1(\cH_1) \cap \mathcal{C}^0_{[0,T]}(\cX_1)$ and  $(\bsig, p) \in \mathcal{C}^1_{[0,T]}(\cH_2) \cap \mathcal{C}^0_{[0,T]}(\cX_2)$ satisfying 
\begin{align}\label{eq:weakform}
	\begin{split}
		\inner{\dot{\underline{\bu}}, \underline{\bv}}_{\cH_1} + \inner{\beta\bu_f,\bv_f}_\Omega  -\inner{\bdiv(\bsig - \alpha p \mathrm{I}_d), \bv_s}_\Omega  
		+ \inner{\nabla p, \bv_f}_\Omega
		&= \inner{\bF_s, \bv_s}_\Omega + \inner{\bF_f, \bv_f}_\Omega
		\\
	\inner{(\dot\bsig, \dot p), (\btau,q) }_{\cH_2} 
	+  \inner{ \bdiv(\btau - \alpha q \mathrm{I}_d) , \bu_s}_\Omega  
	-  \inner{ \nabla q ,\bu_f}_\Omega
	&= \inner{g, q}_\Omega,
	\end{split}
\end{align}
for all $\underline{\bv} = [\bv_s \mid \bv_f]\in \cX_1$ and all $(\btau, q)\in \cX_2$, and subject to the initial conditions \eqref{weakIC}.

\section{Finite element spaces and approximation properties}\label{sec:FE}

For the sake of simplicity, from now on we assume that $\Gamma^s_D = \Gamma^f_D = \Gamma$. As a result, the boundary conditions \eqref{BC} become 
\begin{equation}\label{newBC}
		\bu_s = \mathbf 0  \quad \text{and} 
	\quad 
	p = 0 \quad \text{on $ \Gamma\times (0, T]$}.
\end{equation}   

This section revisits well-established  $hp$--approximation properties, adapting them to match our specific notations and forthcoming requirements.

Let $\cT_h$ be a shape regular partition of the domain $\bar \Omega$ into tetrahedra and/or parallelepipeds if $d=3$ and into triangles and/or quadrilaterals if $d=2$. We allow $\cT_h$ to have hanging nodes. We denote by $h_K$ the diameter of $K$ and let the parameter $h:= \max_{K\in \cT_h} \{h_K\}$ represent the size of the mesh $\cT_h$.

We define a closed subset $F\subset \overline{\Omega}$ as an interior edge/face if it has a positive $(d-1)$-dimensional measure and can be expressed as the intersection of the closures of two distinct elements $K$ and $K'$, ie, $F =\bar K\cap \bar K'$. On the other hand, a closed subset $F\subset \overline{\Omega}$ is a boundary edge/face if there exists $K\in \cT_h$ such that $F$ is an edge/face of $K$ and $F =  \bar K\cap \partial \Omega$. We consider the set $\cF_h^0$ of the interior edges/faces and the set $\cF_h^\partial$ of the boundary edges/faces and let $\cF_h = \cF_h^0\cup \cF_h^\partial$. We denote by $h_F$ the diameter of an edge/face $F\in\cF_h$ and assume that $\cT_h$ is locally quasi-uniform with constant $\gamma>0$. This means that, for all $h$ and all $K\in \cT_h$, we have that
\begin{equation}\label{reguT}
	h_F \leq h_K\leq \gamma h_F\quad \forall F\in \cF(K),
\end{equation}
where $\cF(K)$ represents the set  of edges/faces composing the element $K\in \cT_h$. This condition implies that the neighboring elements have similar sizes.

For all $s\geq 0$, the broken Sobolev space with respect to the partition $\cT_h$ of $\bar \Omega$ is defined as
\[
	H^s(\cT_h,\mathbb{R}^{m\times n}):=
	\set{\bv \in L^2(\Omega, \mathbb{R}^{m\times n});\ \bv|_K\in H^s(K, \mathbb{R}^{m\times n})\quad \forall K\in \cT_h}.
\]
Following the convention mentioned above, we write $H^s(\cT_h,\bbR) = H^s(\cT_h)$ and $H^0(\cT_h,\mathbb{R}^{m\times n}) = L^2(\cT_h,\mathbb{R}^{m\times n})$. We introduce the inner product
\[
	\inner{\boldsymbol{\psi}, \boldsymbol{\varphi}}_{\cT_h} := \sum_{K\in \cT_h} \inner{\boldsymbol{\psi}, \boldsymbol{\varphi}}_{ K}\quad \forall  \boldsymbol{\psi}, \boldsymbol{\varphi} \in L^2(\cT_h, \mathbb{R}^{m\times n})
\]
and write $\norm{\boldsymbol{\psi}}^2_{0,\cT_h}:= \inner{\boldsymbol{\psi}, \boldsymbol{\psi}}_{\cT_h}$. Accordingly, we let $\partial \cT_h:=\set{\partial K;\ K\in \cT_h}$ be the set of all element boundaries and define $L^2(\partial \cT_h,\mathbb{R}^{m\times n})$ as the space of $m\times n$ matrix-valued functions that are square-integrable in each $\partial K\in \partial \cT_h$. We define
\[
	\dual{\boldsymbol{\psi}, \boldsymbol{\varphi}}_{\partial \cT_h} := \sum_{K\in \cT_h} \dual{\boldsymbol{\psi},\boldsymbol{\varphi}}_{\partial K},
	\quad \text{and} \quad
	\norm{\boldsymbol{\varphi}}^2_{0, \partial \cT_h}:= \dual{\boldsymbol{\varphi}, \boldsymbol{\varphi}}_{\partial \cT_h}
	\quad
	\forall  \boldsymbol{\psi}, \boldsymbol{\varphi}\in L^2(\partial \cT_h,\mathbb{R}^{m\times n}),
\]
where  $\dual{\boldsymbol{\psi}, \boldsymbol{\varphi}}_{\partial K} := \sum_{F\in \cF(K)} \int_F \boldsymbol{\psi}: \boldsymbol{\varphi}$.  Besides, we equip the space $L^2(\cF_h,\mathbb{R}^{m\times n})$ with the inner product
\[
	(\boldsymbol{\psi}, \boldsymbol{\varphi})_{\cF_h} := \sum_{F\in \cF_h} \int_F\boldsymbol{\psi}: \boldsymbol{\varphi} \quad \forall \boldsymbol{\psi}, \boldsymbol{\varphi}\in L^2(\cF_h,\mathbb{R}^{m\times n}),
\]
and denote the corresponding norm $\norm*{\boldsymbol{\varphi}}^2_{0,\cF_h}:= (\boldsymbol{\varphi},\boldsymbol{\varphi})_{\cF_h}$. 

Hereafter,  $\cP_\ell(D)$ is the space of polynomials of degree at most $\ell\geq 0$ on $D$ if $D$ is a triangle/tetrahedron, and the space of polynomials of degree at most $\ell$ in each variable  if $D$ is a quadrilateral/parallelepiped.  The space of $\bbR^{m\times n}$-valued functions with components in $\cP_\ell(D)$ is denoted $\cP_\ell(D,\bbR^{m\times n})$. In particular, $\cP_\ell(D,\bbR^{d\times d}_{\text{sym}})$ refers to symmetric $d\times d$-matrices with components in $\cP_\ell(D)$. We introduce the space of piecewise-polynomial functions
\[
	\cP_\ell(\cT_h) :=
	\set{ v\in L^2(\cT_h): \ v|_K \in \cP_\ell(K),\ \forall K\in \cT_h }
\]
with respect to the partition $\cT_h$ and the space of piecewise-polynomial functions
\[
	\cP_\ell(\cF_h) :=
	\set{ \hat v\in L^2(\cF_h): \ \hat v|_F \in \cP_\ell(F),\ \forall F\in \cF_h }
\]
with respect to the skeleton $\cF_h$ of the mesh $\cT_h$. The subspace of $L^2(\cT_h, \bbR^{m\times n})$ with components in $\cP_\ell(\cT_h)$ is denoted $\cP_\ell(\cT_h, \bbR^{m\times n})$. Likewise, $\cP_\ell(\cF_h, \bbR^{m\times n})$ stands for the subspace of $L^2(\cF_h, \bbR^{m\times n})$ with components in $ \cP_\ell(\cF_h)$. We finally consider 
\[
\cP_\ell(\partial \cT_h, \bbR^{m\times n}) := \set*{\phi\in L^2(\partial \cT_h, E);\ \phi|_{\partial K}\in  \cP_\ell(\partial K,\bbR^{m\times n}),\ \forall K\in \cT_h},
\]
where  $\cP_\ell(\partial K, \bbR^{m\times n}):= \prod_{F\in \cF(K)} \cP_\ell(F, \bbR^{m\times n})$. 

\begin{remark}
It is important to keep in mind that, by definition,  the functions in $L^2(\partial \cT_h,\bbR^{m\times n})$ and $\cP_\ell(\partial \cT_h, \bbR^{m\times n})$, are multi-valued on every interior face $F$, whereas the functions in $L^2(\cF_h,\bbR^{m\times n})$ and $\cP_\ell(\cF_h, \bbR^{m\times n})$ are single-valued on each face $F$.
\end{remark}

We consider $\bn\in \cP_0(\partial \cT_h, \bbR^d)$, where $\bn|_{\partial K}=\bn_K $ is the unit normal vector of $\partial K$ oriented toward the exterior of $K$. Obviously, if $F = K\cap K'$ is an interior edge/face of $\cF_h$, then $\bn_K = -\bn_{K'}$ on $F$. If $\boldsymbol{\varphi} \in H^s(\cT_h, \bbR^{m\times n})$,  with $s>\sfrac{1}{2}$, the function $\boldsymbol{\varphi}|_{\partial \cT_h}\in L^2(\partial \cT_h,\bbR^{m\times n})$ is meaningful by virtue of the trace theorem. For the same reason, if $\boldsymbol{\varphi} \in H^1(\Omega, \bbR^{m\times n})$  then $\hat{\boldsymbol{\varphi}} \coloneq  \boldsymbol{\varphi}|_{\cF_h}$ is well defined in $L^2(\cF_h, \bbR^{m\times n})$.

For $k\geq 0$, we introduce the finite-dimensional subspaces of $\mathcal{H}_1$ and $\mathcal{H}_2$ given by
\[
	\mathcal{H}_{1,h} \coloneq  \cP_{k+1}(\cT_h,\bbR^{d\times 2})\quad \text{and} \quad \mathcal{H}_{2,h} \coloneq \cP_{k}(\cT_h,\mathbb{R}^{d\times d}_{\text{sym}}) \times \cP_{k}(\cT_h),
\]
respectively. 

We consider the following discrete trace inequality.
\begin{lemma}\label{TraceDG}
	There exists a constant $C>0$ independent of $h$ and $k$ such that
\begin{equation}\label{discTrace}
		 \norm{\tfrac{h^{\sfrac{1}{2}}_{\cF}}{k+1} q}_{0,\partial \cT_h} \leq C \norm*{ q}_{0, \cT_h}\quad \forall  q \in \cP_{k}(\cT_h).
\end{equation}
\end{lemma}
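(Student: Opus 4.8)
The plan is to prove the $hp$ discrete trace inequality \eqref{discTrace} by a standard scaling argument: reduce the estimate on a general element $K\in\cT_h$ to a reference element via an affine (or bi-/tri-linear, for the quad/hex case) pullback, invoke the known polynomial trace inequality on the reference element, and then track how the mesh size $h_K$, the face size $h_F$, and the polynomial degree $k$ enter through the change of variables. Concretely, for a fixed $F\in\cF(K)$ I would write $\int_F q^2 = \int_F (q|_K)^2$, pull $q|_K$ back to a polynomial $\hat q\in\cP_k(\hat K)$ on the reference element $\hat K$, and use the reference-element estimate
\begin{equation*}
\norm{\hat q}^2_{0,\partial\hat K} \leq C (k+1)^2 \norm{\hat q}^2_{0,\hat K},
\end{equation*}
which is the classical polynomial trace inequality with the sharp $(k+1)^2$ dependence (see, e.g., Warburton--Hesthaven; this is exactly the kind of $hp$ fact the section says it is ``revisiting''). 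Transferring back to $K$, the surface measure scales like $h_F^{d-1}$ and the volume measure like $h_K^d$, so $\norm{q}^2_{0,\partial K}\lesssim (k+1)^2\, \frac{h_F^{d-1}}{h_K^{d}}\,\norm{q}^2_{0,K}$; using the local quasi-uniformity \eqref{reguT}, $h_F\simeq h_K$, this becomes $\norm{q}^2_{0,\partial K}\lesssim \frac{(k+1)^2}{h_K}\norm{q}^2_{0,K}$, i.e.\ $\norm{\frac{h_F^{1/2}}{k+1}q}^2_{0,\partial K}\lesssim \norm{q}^2_{0,K}$. Summing over $K\in\cT_h$ and using that each interior face is shared by exactly two elements yields \eqref{discTrace}.

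The main steps, in order: (i) localize to a single element–face pair; (ii) set up the reference map and record the Jacobian bounds, using shape regularity to control $\norm{DF_K}$ and $\norm{DF_K^{-1}}$ by $h_K$ (and, for non-affine quad/hex elements, to control the surface and volume Jacobians up to constants depending only on the shape-regularity constant); (iii) apply the reference polynomial trace inequality with explicit $(k+1)^2$ dependence; (iv) scale back and invoke \eqref{reguT} to replace $h_F$ by $h_K$; (v) sum over the mesh, absorbing the bounded number of faces per element (which is uniformly bounded under shape regularity, even with hanging nodes, provided the standard one-irregularity assumption is in force). Each piece is routine, so I would keep the exposition brief and cite the standard reference for the reference-element inequality rather than reproving it.

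The only genuinely delicate point is the sharp dependence on $k$: a naive inverse-inequality/scaling argument would give a $(k+1)^2$ factor inside the norm on the boundary (hence $h_F^{1/2}(k+1)^{-1}$ is exactly balanced), so one must use the \emph{optimal} polynomial trace inequality on the reference simplex/cube, not the crude one. A secondary nuisance is the quadrilateral/parallelepiped case, where $F_K$ is not affine; there one uses shape regularity to bound the pointwise surface and volume Jacobians from above and below by $h_F^{d-1}$ and $h_K^d$ respectively, up to constants independent of $h$ and $k$, and the polynomial structure is preserved because $\cP_k$ in the ``degree in each variable'' sense is mapped to $\cP_k$ under the tensor-product reference map. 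With those two observations in hand, the constant $C$ in \eqref{discTrace} depends only on $d$, the shape-regularity constant, and $\gamma$, and in particular not on $h$ or $k$, as claimed.
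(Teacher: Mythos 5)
Your argument is correct and is precisely the standard scaling proof of this $hp$ trace inequality; the paper itself does not prove the lemma but defers to \cite[Lemma~3.2]{meddahi2023hp}, whose underlying argument is the same one you give: the reference-element polynomial trace inequality with the sharp $(k+1)^2$ dependence, shape-regular Jacobian bounds, the local quasi-uniformity \eqref{reguT} to trade $h_F$ for $h_K$, and summation over elements. Two immaterial slips: under an affine (parallelepiped) or bilinear (quadrilateral) map, physical polynomials of degree at most $k$ in each variable pull back to reference polynomials of degree $O(k)$, not $\le k$, in each variable, which only changes the constant by a factor depending on $d$; and the one-irregularity assumption you invoke for hanging nodes is not needed, since \eqref{reguT} together with shape regularity already bounds the number of faces in $\cF(K)$ uniformly.
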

\begin{proof} See \cite[Lemma 3.2]{meddahi2023hp}.
\end{proof}

For any integer $\ell\geq 0$ and $K\in \cT_h$, we denote by $\Pi_K^\ell$ the $L^2(K)$-orthogonal projection onto $\cP_{\ell}(K)$. The global projection $\Pi^\ell_\cT$ in $L^2(\cT_h)$ onto $\cP_\ell(\cT_h)$ is then given by $(\Pi^\ell_\cT v)|_K = \Pi_K^\ell(v|_K)$ for all $K\in \cT_h$. Similarly, the global projection $\Pi_\cF^\ell$ in $L^2(\cF_h)$ onto $\cP_{\ell}(\cF_h)$ is given,  separately for all $F\in \cF_h$,  by $(\Pi^\ell_\cF \hat v)|_{F} = \Pi_F^\ell(\hat v|_F)$, where $\Pi^\ell_F$ is the $L^2(F)$ orthogonal projection onto $\cP_\ell(F)$. In the following, we maintain the notation $\Pi_\cT^\ell$ to refer to the $L^2$-orthogonal projection onto $\mathcal{P}_\ell(\mathcal{T}_h, \mathbb{R}^{m\times n})$. It should be noted that the tensorial version of $\Pi_\cT^\ell$ inherently preserves the symmetry of the matrices, as it is derived by applying the scalar operator component-wise. Similarly, we will also use $\Pi_\cF^\ell$ to denote the $L^2$ orthogonal projection onto $\mathcal{P}_\ell(\mathcal{F}_h, \mathbb{R}^{m\times n})$.

In the remainder of this section, we provide approximation properties for the projectors defined above. A detailed proof of these results can be found in \cite[Section 3]{meddahi2023hp} and the references therein.

\begin{lemma}\label{lem:maintool}
	There exists a constant $C>0$ independent of $h$ and $k$  such that
	\begin{equation}\label{tool1}
		\norm{q - \Pi^k_\cT q}_{0,\cT_h} + \norm{ \tfrac{h^{\sfrac{1}{2}}_{\cF}}{k+1}(q - \Pi_\cT^k q)}_{0,\partial \cT_h}
		\leq
		C \tfrac{h_K^{\min\{ r, k \}+1}}{(k+1)^{r+1}}  \norm{q}_{1+r,\Omega},
	\end{equation}
	for all $q \in  H^{1+r}(\Omega)$, with $r\geq 0$.
\end{lemma}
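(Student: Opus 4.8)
The estimate is local, so the plan is to reduce it to an element-wise bound and then sum. Since $\norm{q-\Pi^k_\cT q}_{0,\cT_h}^2=\sum_{K\in\cT_h}\norm{q-\Pi^k_K q}_{0,K}^2$, $\norm{\tfrac{h^{1/2}_\cF}{k+1}(q-\Pi^k_\cT q)}_{0,\partial\cT_h}^2=\sum_{K\in\cT_h}\norm{\tfrac{h^{1/2}_\cF}{k+1}(q-\Pi^k_K q)}_{0,\partial K}^2$, and $\norm{q}_{1+r,\Omega}^2=\sum_{K\in\cT_h}\norm{q}_{1+r,K}^2$, with $\mu:=\min\{r,k\}+1$ it suffices to establish, for every $K\in\cT_h$,
\begin{equation*}
\norm{q-\Pi^k_K q}_{0,K}+\norm{\tfrac{h^{1/2}_\cF}{k+1}(q-\Pi^k_K q)}_{0,\partial K}\ \lesssim\ \tfrac{h_K^{\mu}}{(k+1)^{1+r}}\,\norm{q}_{1+r,K},
\end{equation*}
and then to sum the squares using $h_K\le h$. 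The core ingredient is the classical $p$-version polynomial approximation theory on a fixed reference element $\hat K$ (Babu\v{s}ka--Suri / Schwab, as recorded in \cite[Section~3]{meddahi2023hp}): after the affine map $K\to\hat K$ there is one polynomial $\hat\pi$ of degree $k$ that is simultaneously near-best in $L^2(\hat K)$ and $H^1(\hat K)$, with $\norm{\hat q-\hat\pi}_{0,\hat K}\lesssim(k+1)^{-(1+r)}\norm{\hat q}_{1+r,\hat K}$ and $\norm{\nabla(\hat q-\hat\pi)}_{0,\hat K}\lesssim(k+1)^{-r}\norm{\hat q}_{1+r,\hat K}$. Scaling back produces a $\pi\in\cP_k(K)$ with $\norm{q-\pi}_{0,K}\lesssim\tfrac{h_K^{\mu}}{(k+1)^{1+r}}\norm{q}_{1+r,K}$ and $\norm{\nabla(q-\pi)}_{0,K}\lesssim\tfrac{h_K^{\mu-1}}{(k+1)^{r}}\norm{q}_{1+r,K}$, and the $L^2(K)$-optimality of $\Pi^k_K$ at once bounds the first term by $\norm{q-\Pi^k_K q}_{0,K}\le\norm{q-\pi}_{0,K}$.

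For the boundary term I would split $q-\Pi^k_K q=(q-\pi)-\Pi^k_K(q-\pi)$ (using $\Pi^k_K\pi=\pi$). Since $\Pi^k_K(q-\pi)\in\cP_k(K)$, Lemma~\ref{TraceDG} handles that piece: $\norm{\tfrac{h^{1/2}_\cF}{k+1}\Pi^k_K(q-\pi)}_{0,\partial K}\lesssim\norm{\Pi^k_K(q-\pi)}_{0,K}\le\norm{q-\pi}_{0,K}\lesssim\tfrac{h_K^{\mu}}{(k+1)^{1+r}}\norm{q}_{1+r,K}$. For the remaining, non-polynomial part I would use $h_\cF\le h_K$ on $\partial K$ (local quasi-uniformity \eqref{reguT}) and then the multiplicative trace inequality $\norm{v}_{0,\partial K}^2\lesssim h_K^{-1}\norm{v}_{0,K}^2+\norm{v}_{0,K}\norm{\nabla v}_{0,K}$, valid for $v=q-\pi\in H^1(K)$ with a constant depending only on the shape regularity; combined with the two scaled bounds for $q-\pi$ this gives $\norm{\tfrac{h^{1/2}_\cF}{k+1}(q-\pi)}_{0,\partial K}\le\tfrac{h_K^{1/2}}{k+1}\norm{q-\pi}_{0,\partial K}\lesssim\tfrac{h_K^{1/2}}{k+1}\cdot\tfrac{h_K^{\mu-1/2}}{(k+1)^{1/2+r}}\norm{q}_{1+r,K}=\tfrac{h_K^{\mu}}{(k+1)^{3/2+r}}\norm{q}_{1+r,K}\le\tfrac{h_K^{\mu}}{(k+1)^{1+r}}\norm{q}_{1+r,K}$. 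Adding the two contributions yields the element-wise estimate.

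The affine scaling, the multiplicative trace inequality, and the bookkeeping of powers of $h_K$ and $k+1$ are all routine. The single nontrivial input, which I would import from \cite[Section~3]{meddahi2023hp} (and ultimately from the Babu\v{s}ka--Suri / Schwab $p$-version theory) rather than reprove, is the reference-element approximation estimate: the existence of one degree-$k$ polynomial $\hat\pi$ on $\hat K$ achieving the stated $L^2(\hat K)$ and $H^1(\hat K)$ rates for functions of possibly non-integer Sobolev order $1+r$, with the precise exponents of $k+1$. I note that the weight $h_\cF^{1/2}/(k+1)$ leaves a factor $(k+1)^{-1/2}$ of room in the boundary term, so the argument is not sensitive to having the sharpest possible $H^1$ rate there; that sharp rate is used only to keep the exposition uniform.
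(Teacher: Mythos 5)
Your proof is correct, and it fills in precisely the standard argument that the paper delegates to the citation \cite[Lemma 3.3]{meddahi2023hp}: an $hp$ quasi-interpolant $\pi\in\cP_k(K)$ with simultaneous $L^2$ and $H^1$ rates, $L^2$-optimality of $\Pi^k_K$ for the volume term, and for the boundary term the split $(q-\pi)-\Pi^k_K(q-\pi)$ handled by the multiplicative trace inequality and the discrete trace inequality of Lemma~\ref{TraceDG}, with the bookkeeping of powers of $h_K$ and $k+1$ carried out correctly (including the spare factor $(k+1)^{-1/2}$ you note). Since the paper offers no in-text proof, there is nothing further to compare; your element-wise reduction and summation, with $h_K\le h$, is exactly what the cited result provides.
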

\begin{proof}
	See \cite[Lemma 3.3]{meddahi2023hp}.
\end{proof}

We introduce the spaces 
\[
\mathcal{U} \coloneq \set*{\ubv=[\bv_s \mid \bv_f];\ \bv_s \in H^1(\cT_h,\mathbb{R}^d),\ \bv_f \in H(\div,\cT_h)\cap H^r(\cT_h,\mathbb{R}^d)} \quad \text{with}\ r>\sfrac12,  
\]
\[
\text{and} \quad \hat{\mathcal{U}} \coloneq \set*{\hat{\underline{\bv} } = [\hat \bv_s \mid \hat \bv_f];\ \hat \bv_s \in L^2(\cF^0_h,\mathbb{R}^{d}),\ \hat \bv_f \in L^2(\cF_h,\mathbb{R}^{d}) },
\] 
where
\(
	L^2(\cF^0_h,\mathbb{R}^{d}) \coloneq  \set*{\boldsymbol{\phi}\in L^2(\cF_h,\mathbb{R}^{d});\ \boldsymbol{\phi}|_F = \mathbf 0,\ \forall F\in \cF_h^\partial}
\), 
and endow the product space $\mathcal{U} \times \hat{\mathcal{U}}$ with the semi-norm defined, for all $(\ubv, \hat \ubv)\in \mathcal{U} \times \hat{\mathcal{U}}$, by  
\begin{equation}\label{norm:sym}
	\abs*{(\ubv, \hat \ubv)}^2_{\mathcal{U} \times \hat{\mathcal{U}}} = \norm{\beps( \bv_s)}^2_{0,\cT_h} + \norm{\div \bv_f}^2_{0,\cT_h} + \norm{\tfrac{k+1}{h_\cF^{\sfrac{1}{2}}}(\ubv - \hat \ubv)}^2_{0, \partial \cT_h},
\end{equation}
where $h_\cF\in \cP_0(\cF_h)$ is given by $h_\cF|_F := h_F$ for all $F \in \cF_h$.
For $k\geq 0$, we consider the finite-dimensional subspace  
\[
\hat{\mathcal{H}}_{1,h} \coloneq  \set*{[\hat \bv_s \mid \hat \bv_f];\ \hat \bv_s \in \cP_{k+1}(\cF^0_h, \mathbb{R}^d),\ \hat \bv_f \in \cP_{k+1}(\cF_h, \mathbb{R}^d)} \subset \hat{\mathcal{U}},
\]
where $\cP_{k+1}(\cF^0_h, \mathbb{R}^d) := \set{\boldsymbol{\phi} \in \cP_{k+1}(\cF_h, \mathbb{R}^d);\ \boldsymbol{\phi}|_F = \mathbf 0,\ \forall F\in \cF_h^\partial}$. 

Finally, we  consider the subspace $\mathcal{U}^c$ of $\mathcal{U}$ of functions with continuous traces across the interelements of $\cT_h$, namely, 
\[
\mathcal{U}^c \coloneq  \set*{\ubv = [\bv_s \mid \bv_f];\ \bv_s \in  H_0^1(\Omega,\mathbb{R}^d),\ \bv_f \in H(\bdiv,\Omega)\cap H^r(\Omega,\mathbb{R}^d)}.
\] 

\begin{lemma}\label{maintool2}
	There exists a constant $C>0$ independent of $h$ and $k$  such that
	\begin{equation}\label{tool2}
		\abs*{ (\underline{\bu} - \Pi^{k+1}_{\cT} \underline{\bu}, \underline{\bu}|_{\partial \cF_h} - \Pi^{k+1}_{\cF} (\underline{\bu}|_{\partial \cF_h}) )}_{\mathcal{U} \times \hat{\mathcal{U}}}
		\leq
		C \tfrac{h^{\min\{ r, k \}+1}}{(k+1)^{r+\sfrac{1}{2}}}  \norm{\underline{\bu}}_{2+r,\Omega},
	\end{equation}
	for all  $\underline{\bu}  \in \mathcal{U}^c\cap H^{2+r}(\Omega,\bbR^{d\times 2})$ , $r \geq 0$.
\end{lemma}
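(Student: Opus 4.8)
The plan is to estimate separately the three terms appearing in the semi-norm $\abs{\,\cdot\,}_{\mathcal{U}\times\hat{\mathcal{U}}}$ in \eqref{norm:sym}, namely the solid strain term $\norm{\beps(\bu_s-\Pi^{k+1}_\cT\bu_s)}_{0,\cT_h}$, the fluid divergence term $\norm{\div(\bu_f-\Pi^{k+1}_\cT\bu_f)}_{0,\cT_h}$, and the jump-type term $\norm{\tfrac{k+1}{h_\cF^{1/2}}\bigl((\ubu-\Pi^{k+1}_\cT\ubu)-(\ubu|_{\cF_h}-\Pi^{k+1}_\cF(\ubu|_{\cF_h}))\bigr)}_{0,\partial\cT_h}$. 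Since $\ubu\in\mathcal{U}^c$ has continuous interelement traces, on each face $F=\bar K\cap\bar K'$ the single-valued trace $\ubu|_{\cF_h}$ coincides with both elementwise traces, so the jump term reduces to a sum over elements $K$ of $\norm{\tfrac{k+1}{h_F^{1/2}}\bigl((\ubu-\Pi^{k+1}_\cT\ubu)|_K-\Pi^{k+1}_F((\ubu-\Pi^{k+1}_\cT\ubu)|_F)\bigr)}_{0,\partial K}$ after inserting and subtracting $\Pi^{k+1}_F$ applied to the elementwise trace and using that $\Pi^{k+1}_F$ commutes appropriately; this is controlled by $\norm{\tfrac{k+1}{h_F^{1/2}}(\ubu-\Pi^{k+1}_\cT\ubu)}_{0,\partial K}$ since an $L^2$-projection on $F$ is a contraction in $\norm{\cdot}_{0,F}$.

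The second step is to invoke the component-wise scalar $hp$-approximation estimate already recorded in Lemma~\ref{lem:maintool}: applied to each scalar component of $\bu_s$ and $\bu_f$ with smoothness index shifted by one (because $\ubu\in H^{2+r}$ means each component is in $H^{2+r}$, hence $\nabla$-components and the scalar itself are in $H^{1+r}$), it yields both an interior $L^2$-bound of order $h_K^{\min\{r,k\}+1}/(k+1)^{r+1}$ and, crucially, a boundary $L^2$-bound with the weight $h_\cF^{1/2}/(k+1)$ already built in. For the strain term one applies \eqref{tool1} (or rather the analogous first-derivative bound behind it) to the components of $\nabla\bu_s$; for the divergence term, to the components of $\nabla\bu_f$; and for the jump term, one combines the boundary part of \eqref{tool1} — which after multiplying by the reciprocal weight $(k+1)/h_\cF^{1/2}$ gives exactly a factor $h_K^{\min\{r,k\}+1}/(k+1)^{r+1/2}$ — with a scaled trace/approximation estimate for the $L^2(F)$-projection error. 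Summing over $K\in\cT_h$ and using the local quasi-uniformity \eqref{reguT} to replace $h_K$ by $h$ converts the elementwise $h_K$-powers into the global $h^{\min\{r,k\}+1}$, and the $(k+1)$-powers match the claimed $(k+1)^{r+1/2}$ once one checks that the jump term is the one dictating the exponent (the strain and divergence terms carry the better power $(k+1)^{r+1}$).

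The main obstacle is the bookkeeping of the polynomial-degree powers: the statement advertises $(k+1)^{r+1/2}$, which is a \emph{half power worse} than the optimal $(k+1)^{r+1}$, and this loss comes entirely from the boundary/jump contribution, where one pays an extra $(k+1)^{1/2}$ relative to the interior estimate. Tracking this carefully — making sure the $h^{1/2}_\cF/(k+1)$ weight in Lemma~\ref{lem:maintool} is used with the right sign and that the discrete trace inequality of Lemma~\ref{TraceDG} is not needed here (it would cost the full power) but rather the sharper combined bound of Lemma~\ref{lem:maintool} is — is where the argument must be precise. Everything else is a routine scaling argument: reference element, Bramble–Hilbert / $hp$-interpolation on the reference simplex or cube, and the quasi-uniformity hypothesis to globalize. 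For the full details one cites \cite[Section~3]{meddahi2023hp}, where the scalar and vector versions of these $hp$-projection estimates, including the boundary terms, are established.
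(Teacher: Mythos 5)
Your overall architecture is the right one, and it is essentially what the paper intends by simply deferring to \cite[Lemma 3.4]{meddahi2023hp}: estimate the three contributions in \eqref{norm:sym} separately, use the continuity of the traces of $\ubu\in\mathcal{U}^c$ together with the fact that $(\Pi^{k+1}_\cT\ubu)|_F$ is again a polynomial on $F$, so that on each face $\ubv-\hat{\ubv}=\Pi^{k+1}_F\bigl((\ubu-\Pi^{k+1}_\cT\ubu)|_F\bigr)$ and the face projection can be dropped by $L^2(F)$-contraction, and then invoke elementwise $hp$-projection estimates and local quasi-uniformity \eqref{reguT}. That part of your argument is sound.

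However, two of your concrete mechanisms do not deliver the stated bound. First, for the strain and divergence terms you propose applying \eqref{tool1} to the components of $\nabla\bu_s$ (resp.\ $\nabla\bu_f$); this controls $\nabla\bu_s-\Pi^{k}_\cT(\nabla\bu_s)$, not $\nabla(\bu_s-\Pi^{k+1}_\cT\bu_s)$, because the $L^2$-projection does not commute with differentiation. What is actually needed is an $hp$-bound for the $L^2$-projection error in the $H^1$-seminorm, and the available estimate of Canuto--Quarteroni type (collected in \cite[Section 3]{meddahi2023hp}) is itself suboptimal by half a power: it gives $h^{\min\{r,k\}+1}(k+1)^{-(r+\sfrac12)}\norm{\ubu}_{2+r,\Omega}$, not the $(k+1)^{-(r+1)}$ you attribute to these terms; so the loss is not ``entirely'' a boundary effect (harmless for the final rate, but the mechanism must be corrected). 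Second, and more consequentially, the jump term cannot be obtained from \eqref{tool1} as stated: converting the weight $h_\cF^{\sfrac12}/(k+1)$ appearing in \eqref{tool1} into the weight $(k+1)/h_\cF^{\sfrac12}$ of \eqref{norm:sym} costs a factor $(k+1)^2/h_\cF$, and with the shifted indices ($k\to k+1$, $1+r\to 2+r$) this yields only $h^{\min\{r,k\}+1}(k+1)^{-r}$, i.e.\ half a power short of \eqref{tool2}. To reach $(k+1)^{-(r+\sfrac12)}$ you must use the sharper per-element trace estimate for the $L^2$-projection error, $\norm{u-\Pi^{k+1}_K u}_{0,\partial K}\lesssim h_K^{\min\{r,k\}+\sfrac32}(k+1)^{-(r+\sfrac32)}\norm{u}_{2+r,K}$, which is strictly stronger than the combined statement \eqref{tool1} and is one of the ingredients of \cite[Section 3]{meddahi2023hp}. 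With these two repairs, both available in the cited reference you already point to, your outline closes.
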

\begin{proof}
	The result is proved similarly to \cite[Lemma 3.4]{meddahi2023hp}. 
\end{proof}

\section{The HDG semi-discrete method}\label{sec:semi-discrete}

In this section, we present the HDG semi-discrete method for the poroelasticity problem \eqref{eq:weakform}. We establish the well-posedness of the resulting algebraic-differential equations and prove the method's consistency with the continuous formulation.

We propose the following HDG space-discretization method for problem \eqref{eq:weakform}:  find    $(\underline{\bu}_{h}, \underline{\hat{\bu}}_{h} ) \in \cC^1_{[0,T]}(\mathcal{H}_{1,h} \times \hat{\mathcal{H}}_{1,h})$ and $(\bsig_h, p_h) \in  \cC^1_{[0,T]}(\cH_{2,h})$    satisfying
\begin{align}\label{sd}
	\begin{split}
		\inner*{\dot{\underline{\bu}}_{h}, \underline{\bv}}_{\cH_1} + 
		\inner{(\dot{\bsig}_h, \dot p_h), (\btau,q) }_{\cH_2}  &+ \inner{\beta \bu_{f,h},\bv_f}_\Omega
		+ B_h((\bsig_h, p_h), (\underline{\bv}, \underline{\hat{\bv}} )) 
		- B_h((\btau,q), (\underline{\bu}_{h}, \underline{\hat{\bu}}_{h} )) 
		\\
	  & \quad +\dual{ \tfrac{(k+1)^2}{h_\cF}(\underline{\bu}_{h} - \underline{\hat{\bu}}_{h} ), 
	  \underline{\bv} - \underline{\hat{\bv}} }_{\partial \cT_h}  
	  =  \inner{\underline{\bF}, \underline{\bv}}_\Omega + \inner{g, q}_\Omega,
	\end{split}
\end{align}
for all $(\underline{\bv}, \underline{\hat{\bv}} ) \in \mathcal{H}_{1,h} \times \hat{\mathcal{H}}_{1,h}$ and $(\btau,q) \in  \cH_{2,h}$,  where the bilinear form $B_h(\cdot,\cdot)$ is given by
\begin{align*}
\begin{split}
	B_h( (\btau,q), (\underline{\bv}, \underline{\hat{\bv}} )) &:= 
	\inner*{\btau - \alpha q \mathrm{I}_d, \beps(\bv_s) }_{\cT_h} -  
	\inner*{q, \div\bv_f}_{\cT_h}  
	\\
	& \qquad  - \dual*{(\btau -  \alpha q \mathrm{I}_d)\bn , \bv_{s} - \hat\bv_{s} }_{\partial \mathcal{T}_h} + \dual*{q\bn, \bv_{f} - \hat\bv_{f} }_{\partial \mathcal{T}_h}.
\end{split}
\end{align*}

We start up problem \eqref{sd} with the initial conditions
\begin{equation}\label{initial-R1-R2-h*c}
	\ubu_h(0) = \Pi^{k+1}_\cT \ubu^0, \quad \hat{\ubu}_h(0)  = \Pi^{k+1}_\cF (\ubu^0|_{\partial \mathcal{F}_h}), \quad \bsig_{h}(0)= \Pi_\cT^k \bsig^0, \quad \text{and} \quad p_h(0) = \Pi_\cT^k p^0.
\end{equation}

We have the following boundedness property for the bilinear form $B_h(\cdot,\cdot)$.
\begin{proposition}
	There exists a constant $C>0$ independent of $h$ and $k$ such that
	\begin{equation}\label{Bhh}
		|B_h((\btau_h,q_h)  , (\underline{\bv}, \underline{\hat{\bv}} ))| \leq C  \norm{(\btau_h,q_h) }_{\cH_2}  \abs{ (\underline{\bv}, \underline{\hat{\bv}} )}_{\mathcal{U}\times \hat{\mathcal{U}}}\quad \text{for all}\ (\btau_h,q_h)  \in \mathcal H_{2,h} \ \text{and} \ (\underline{\bv}, \underline{\hat{\bv}} ) \in \mathcal{U}\times \hat{\mathcal{U}}.
	\end{equation}
\end{proposition}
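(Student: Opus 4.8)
The plan is to estimate each of the four terms composing $B_h((\btau_h,q_h),(\ubv,\hat\ubv))$ separately and then combine them, controlling everything by $\norm{(\btau_h,q_h)}_{\cH_2}$ on the first factor (which by \eqref{normH} is equivalent to $\norm{\btau_h}_{0,\Omega}+\norm{q_h}_{0,\Omega}$) and by the seminorm $\abs{(\ubv,\hat\ubv)}_{\cU\times\hat\cU}$ on the second factor. First I would observe that $\norm{\btau_h-\alpha q_h\mathrm{I}_d}_{0,\cT_h}\lesssim\norm{\btau_h}_{0,\Omega}+\norm{q_h}_{0,\Omega}\lesssim\norm{(\btau_h,q_h)}_{\cH_2}$ since $\alpha\le 1$ and $\norm{\mathrm{I}_d}$ is a dimensional constant. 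The two volume terms $\inner{\btau_h-\alpha q_h\mathrm{I}_d,\beps(\bv_s)}_{\cT_h}$ and $\inner{q_h,\div\bv_f}_{\cT_h}$ are then dispatched immediately by Cauchy--Schwarz: they are bounded by $\norm{\btau_h-\alpha q_h\mathrm{I}_d}_{0,\cT_h}\norm{\beps(\bv_s)}_{0,\cT_h}$ and $\norm{q_h}_{0,\cT_h}\norm{\div\bv_f}_{0,\cT_h}$ respectively, and both $\norm{\beps(\bv_s)}_{0,\cT_h}$ and $\norm{\div\bv_f}_{0,\cT_h}$ appear explicitly in $\abs{(\ubv,\hat\ubv)}_{\cU\times\hat\cU}$.

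The two boundary terms are where the discrete trace inequality does the work. For $\dual{(\btau_h-\alpha q_h\mathrm{I}_d)\bn,\bv_s-\hat\bv_s}_{\partial\cT_h}$ I would insert the weight $h_\cF^{\sfrac12}/(k+1)$ against its reciprocal and apply Cauchy--Schwarz on $L^2(\partial\cT_h)$:
\[
\abs{\dual{(\btau_h-\alpha q_h\mathrm{I}_d)\bn,\bv_s-\hat\bv_s}_{\partial\cT_h}}
\le \norm{\tfrac{h_\cF^{\sfrac12}}{k+1}(\btau_h-\alpha q_h\mathrm{I}_d)\bn}_{0,\partial\cT_h}\,
\norm{\tfrac{k+1}{h_\cF^{\sfrac12}}(\bv_s-\hat\bv_s)}_{0,\partial\cT_h}.
\]
The second factor is controlled by $\abs{(\ubv,\hat\ubv)}_{\cU\times\hat\cU}$ directly (the last term in \eqref{norm:sym}, after noting $\bv_s-\hat\bv_s$ is a sub-block of $\ubv-\hat\ubv$), and for the first factor I would apply Lemma~\ref{TraceDG} componentwise — since $\btau_h-\alpha q_h\mathrm{I}_d\in\cP_k(\cT_h,\mathbb{R}^{d\times d}_{\text{sym}})$ and $\abs{\bn}=1$ — to bound it by $C\norm{\btau_h-\alpha q_h\mathrm{I}_d}_{0,\cT_h}\lesssim\norm{(\btau_h,q_h)}_{\cH_2}$. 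The term $\dual{q_h\bn,\bv_f-\hat\bv_f}_{\partial\cT_h}$ is handled identically, using $q_h\in\cP_k(\cT_h)$ in Lemma~\ref{TraceDG} and the fact that $\bv_f-\hat\bv_f$ is the other sub-block of $\ubv-\hat\ubv$. Summing the four bounds and using $\abs{a}+\abs{b}+\abs{c}+\abs{d}\le 2(a^2+b^2+c^2+d^2)^{\sfrac12}$-type inequalities to pass to the product of the two composite norms yields \eqref{Bhh}.

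I do not anticipate a genuine obstacle here; the estimate is routine once the pieces are lined up. The only point requiring a little care is the bookkeeping for the boundary terms: one must be sure that Lemma~\ref{TraceDG}, stated for scalar $q\in\cP_k(\cT_h)$, applies to the matrix-valued polynomial $\btau_h-\alpha q_h\mathrm{I}_d$ and, after contraction with the unit normal $\bn\in\cP_0(\partial\cT_h,\bbR^d)$, still yields a bound by the volume $L^2$-norm of $\btau_h-\alpha q_h\mathrm{I}_d$ with an $h$- and $k$-independent constant — this follows by applying the scalar inequality entrywise and using $\abs{\bn}=1$. It is also worth recording explicitly that $\norm{\btau_h-\alpha q_h\mathrm{I}_d}_{0,\cT_h}\le\norm{\btau_h}_{0,\Omega}+\sqrt d\,\norm{q_h}_{0,\Omega}$, so that all four contributions are ultimately controlled by $\norm{(\btau_h,q_h)}_{\cH_2}$ via the equivalence \eqref{normH}, with the constant $C$ in \eqref{Bhh} depending only on $\gamma$, $d$, $a^\pm$ and the constant of Lemma~\ref{TraceDG}.
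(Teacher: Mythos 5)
Your proposal is correct and follows essentially the same route as the paper: Cauchy--Schwarz on the volume terms, the weight $h_\cF^{\sfrac12}/(k+1)$ versus its reciprocal on the facet terms, and then the discrete trace inequality \eqref{discTrace} (applied componentwise to the polynomial data, with $\abs{\bn}=1$) together with the norm equivalence \eqref{normH} to absorb everything into $\norm{(\btau_h,q_h)}_{\cH_2}\,\abs{(\underline{\bv},\underline{\hat{\bv}})}_{\mathcal{U}\times\hat{\mathcal{U}}}$. The only cosmetic difference is that the paper first records the intermediate bound \eqref{Bh} for general piecewise-smooth $(\btau,q)$ and then specializes to $\cH_{2,h}$, whereas you estimate the four terms of $B_h$ directly for discrete arguments.
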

\begin{proof}
	Applying the Cauchy-Schwarz inequality and \eqref{normH}, we deduce that
	\begin{equation}\label{Bh}
		|B_h((\btau,q)  , (\underline{\bv}, \underline{\hat{\bv}} ))| \lesssim ( \norm{(\btau,q) }^2_{\cH_2} + \norm{\tfrac{h_\cF^{\sfrac{1}{2}}}{k+1} \btau}^2_{0,\partial \cT_h} + \norm{\tfrac{h_\cF^{\sfrac{1}{2}}}{k+1} q}^2_{0,\partial \cT_h})^{\sfrac{1}{2}} \abs{ (\underline{\bv}, \underline{\hat{\bv}} )}_{\mathcal{U}\times \hat{\mathcal{U}}},
	\end{equation}
	for all $(\btau,q) \in \cH_2$ such that $\btau \in H^s(\cT_h, \mathbb{R}^{d\times d}_{\text{sym}})$ and $q \in H^s(\cT_h)$ with $s\geq \sfrac{1}{2}$, and for all $(\underline{\bv}, \underline{\hat{\bv}} ) \in \mathcal{U}\times \hat{\mathcal{U}}$. The result follows by applying the discrete trace inequality \eqref{discTrace}.
\end{proof}

\begin{proposition}
	Problem~\eqref{sd}-\eqref{initial-R1-R2-h*c} admits a unique solution.
\end{proposition}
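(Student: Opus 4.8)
The plan is to recognize problem~\eqref{sd}–\eqref{initial-R1-R2-h*c} as a linear system of ordinary differential equations in the finite-dimensional coefficient vectors and invoke the Picard–Lindelöf theorem, so that the whole issue reduces to showing that the ``mass'' part of the system is invertible. Choosing bases for $\mathcal{H}_{1,h}$, $\hat{\mathcal{H}}_{1,h}$ and $\mathcal{H}_{2,h}$, collect the unknowns into a vector-valued function $Y(t)=(\underline{\bu}_h,\underline{\hat\bu}_h,\bsig_h,p_h)(t)$. Notice that the only time derivatives appearing in \eqref{sd} are $\dot{\underline{\bu}}_h$ and $(\dot\bsig_h,\dot p_h)$, tested respectively against $\underline{\bv}\in\mathcal{H}_{1,h}$ and $(\btau,q)\in\mathcal{H}_{2,h}$ through the inner products $\inner{\cdot,\cdot}_{\cH_1}$ and $\inner{\cdot,\cdot}_{\cH_2}$; the hat variable $\underline{\hat\bu}_h$ carries no time derivative. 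The system therefore has the block structure $\mathsf{M}\dot{Z} = \mathsf{L}\,Z + \mathsf{L}_{\hat{}}\,\hat Z + \mathsf{b}(t)$ together with an algebraic equation obtained by testing with $\underline{\hat\bv}$ alone, where $Z=(\underline{\bu}_h,\bsig_h,p_h)$ and $\hat Z=\underline{\hat\bu}_h$.

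First I would isolate the algebraic constraint: testing \eqref{sd} with $(\underline{\bv},\underline{\hat\bv})=(\mathbf 0,\underline{\hat\bv})$ and $(\btau,q)=(\mathbf 0,0)$ gives, for every $\underline{\hat\bv}\in\hat{\mathcal{H}}_{1,h}$,
\[
-B_h((\bsig_h,p_h),(\mathbf 0,\underline{\hat\bv})) + \dual*{\tfrac{(k+1)^2}{h_\cF}(\underline{\bu}_h-\underline{\hat\bu}_h),-\underline{\hat\bv}}_{\partial\cT_h}=0.
\]
Since $B_h((\bsig_h,p_h),(\mathbf 0,\underline{\hat\bv}))$ is a pure boundary term linear in $\underline{\hat\bv}$, this is a square linear system for $\underline{\hat\bu}_h$ whose matrix is $\dual*{\tfrac{(k+1)^2}{h_\cF}\underline{\hat\bu}_h,\underline{\hat\bv}}_{\partial\cT_h}$-positive definite on $\hat{\mathcal{H}}_{1,h}$ (each interior face is shared by exactly two elements and each boundary face by one, so the weighted boundary $L^2$-product is a genuine inner product on $\hat{\mathcal{H}}_{1,h}$). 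Hence $\underline{\hat\bu}_h$ is uniquely and linearly determined by $(\underline{\bu}_h,\bsig_h,p_h)$; substituting this expression back eliminates $\hat Z$ and leaves a closed ODE $\mathsf{M}\dot{Z}=\tilde{\mathsf L}Z+\mathsf b(t)$.

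Next I would show $\mathsf{M}$ is invertible. The mass bilinear form is $(Z,Z')\mapsto\inner{\underline{\bu}_h,\underline{\bu}'_h}_{\cH_1}+\inner{(\bsig_h,p_h),(\bsig'_h,p'_h)}_{\cH_2}$, which by \eqref{bound:rho} and \eqref{normH} is coercive on $\mathcal{H}_{1,h}\times\mathcal{H}_{2,h}$; therefore $\mathsf M$ is symmetric positive definite, in particular invertible. Consequently $\dot Z=\mathsf M^{-1}\tilde{\mathsf L}Z+\mathsf M^{-1}\mathsf b(t)$ is a linear ODE with constant coefficient matrix and continuous right-hand side (continuity of $\mathsf b(t)$ follows from $\underline{\bF}\in\mathcal{C}^1_{[0,T]}(L^2)$, $g\in\mathcal{C}^1_{[0,T]}(L^2)$, which is more than enough). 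By the Picard–Lindelöf theorem there is a unique global solution $Z\in\mathcal{C}^1_{[0,T]}$ with the prescribed initial value \eqref{initial-R1-R2-h*c}, and then $\underline{\hat\bu}_h\in\mathcal{C}^1_{[0,T]}$ via the linear elimination formula; note the initial data in \eqref{initial-R1-R2-h*c} are consistent because $\Pi^{k+1}_\cF(\ubu^0|_{\partial\cF_h})$ need not satisfy the algebraic constraint at $t=0$, but this is harmless since $\underline{\hat\bu}_h(0)$ is not a degree of freedom of the reduced ODE — alternatively one keeps $\underline{\hat\bu}_h$ in the system and observes the DAE has index one, so a classical existence result for index-one linear DAEs applies directly. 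The main obstacle, and the only point requiring care, is verifying that the weighted boundary stabilization term indeed defines a positive-definite form on $\hat{\mathcal{H}}_{1,h}$ so that the hat unknown can be eliminated (equivalently, that the DAE is index one); everything else is the routine observation that a linear ODE with invertible mass matrix and continuous forcing is uniquely solvable.
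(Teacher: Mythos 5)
Your proposal is correct and takes essentially the same route as the paper: testing with hat functions alone yields an algebraic constraint that determines $\hat{\ubu}_h$ uniquely (by positive definiteness of the weighted facet stabilization on the single-valued trace space; the paper writes this solution explicitly as the $L^2$-projection of an average-based expression), and substituting back leaves a linear ODE whose mass matrix is symmetric positive definite thanks to the $\cH_1$- and $\cH_2$-inner products, hence uniquely solvable. The only cosmetic differences are a sign slip (the $B_h$ term in your constraint should carry a plus sign) and your explicit remark on the initial datum for $\hat{\ubu}_h$, which the paper handles implicitly by retaining only the initial conditions \eqref{initODE} for the reduced system.
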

\begin{proof}
	We consider the lineal application $\mmean{\cdot}: \mathcal{H}_{1,h} \to \hat{\mathcal{H}}_{1,h}$ defined by $\mmean{ \ubv } = [ \mmean{\bv_s} | \mmean{\bv_f} ]$, where $\mmean{\bv_s}|_F = \tfrac12 ( \bv_s|_K + \bv_s|_{K'} )|_F$ and $\mmean{\bv_f}|_F = \tfrac12 ( \bv_f|_K + \bv_f|_{K'} )|_F$ for all $F\in \cF_h^0$ with $F = K\cap K'$, and $\mmean{\bv_s}|_F = 0$ and $\mmean{\bv_f}|_F = \bv_f|_F$ for all $F\in \cF_h^\partial$.  
	

	The algebraic differential equation \eqref{sd} can be split  into the following system of equations:
\begin{align}\label{ADE1}
	\begin{split}
	\inner*{\dot{\underline{\bu}}_{h}, \underline{\bv}}_{\cH_1} &+ 
		\inner{\dot{\underline{\bsig}}_h, (\btau,q) }_{\cH_2}  + \inner{\beta \bu_{f,h},\bv_f}_\Omega
		+ B_h((\bsig_h, p_h), (\underline{\bv}, \mathbf 0 )) - B_h((\btau,q), (\underline{\bu}_{h}, \underline{\hat{\bu}}_{h} )) 
		\\
	  & \quad +\dual{ \tfrac{(k+1)^2}{h_\cF}(\underline{\bu}_{h} - \underline{\hat{\bu}}_{h} ), \underline{\bv}  }_{\partial \cT_h}  = \inner{\underline{\bF}, \underline{\bv}}_\Omega + \inner{g, q}_\Omega, \quad \forall \underline{\bv} \in \mathcal{H}_{1,h},\ \forall (\btau,q) \in \mathcal{H}_{2,h}
\\
&B_h((\bsig_h, p_h), (\mathbf 0, \hat{\underline{\bv} }) ) + \dual{\tfrac{(k+1)^2}{h_\cF}(\underline{\bu}_{h} - \underline{\hat{\bu}}_{h} ), - \hat{\underline{\bv} }  }_{\partial \cT_h} = 0 \quad \forall \hat{\underline{\bv} } \in \hat{\mathcal{H}}_{1,h}.
\end{split}
\end{align}  
From the second equation in \eqref{ADE1}, we deduce that the numerical flux $\hat{\underline{\bu}}_h \in \hat{\mathcal{H}}_{1,h}$ satisfies the equation
\[
\dual{ \hat{\underline{\bu}}_h, \hat{\underline{\bv} } }_{\cF_h}  = \dual{ \mmean{\underline{\bu}_h} -  \tfrac{h_\cF}{(k+1)^2} [ \mmean{(\bsig_h - \alpha p_h \mathrm{I}_d)\bn} | - \mmean{p_h\bn} ], \hat{\underline{\bv} } }_{\cF_h}\quad \forall \hat{\underline{\bv} } \in \hat{\mathcal{H}}_{1,h}.
\]
In other words, $\hat{\underline{\bu}}_h$ is the $L^2$-orthogonal projection of $\mmean{\underline{\bu}_h} -  \tfrac{h_\cF}{(k+1)^2} [ \mmean{(\bsig_h - \alpha p_h \mathrm{I}_d)\bn} | - \mmean{p_h\bn} ]$ onto $\hat{\mathcal{H}}_{1,h}$. Substituting this expression for  $\hat{\underline{\bu}}_h$ into the first equation of \eqref{ADE1} yields a system of ordinary differential equations  with unknowns $\ubu_h \in \mathcal{C}^1_{[0,T]}(\mathcal{U}_h)$ and $(\bsig_h, p_h) \in  \mathcal{C}^1_{[0,T]}(\mathcal{H}_{2,h})$,  along with initial conditions: 
\begin{equation}\label{initODE}
	\ubu_h(0) = \Pi^{k+1}_\cT \ubu^0, \quad \bsig_{h}(0)= \Pi_\cT^k \bsig^0, \quad \text{and} \quad p_h(0) =  \Pi_\cT^k p^0.
\end{equation}

The well-posedness of the resulting first order ODE system follows from the fact that $\inner{\cdot, \cdot}_{\mathcal{H}_1}$ and $\inner{\cdot, \cdot}_{\mathcal{H}_2}$ are inner products on the finite--dimensional function spaces $\mathcal{U}_h$ and $\mathcal{H}_{2,h}$, respectively.  
\end{proof}

Let us now verify that the HDG scheme \eqref{sd} is consistent with problem \eqref{eq:weakform}. 

\begin{proposition}\label{consistency}
Let $\underline{\bu} = [\bu_s|\bu_f] \in \mathcal{C}_{[0,T]}^1(\cH_1) \cap \mathcal{C}^0_{[0,T]}(\cX_1)$ and $(\bsig , p) \in \mathcal{C}^1_{[0,T]}(\cH_2) \cap \mathcal{C}^0_{[0,T]}(\cX_2)$ be the solutions of \eqref{eq:weakform}. Assume that $\bsig - \alpha p \mathrm{I}_d\in\cC_{[0,T]}^0(H^s(\cT_h, \bbS))$ and $\underline{\bu}  \in \cC_{[0,T]}^0(H^s(\cT_h, \mathbb{R}^{d \times 2}))$, with $s>\sfrac{1}{2}$. Then, it holds true that 
\begin{align}\label{consistent}
	\begin{split}
		&\inner*{\dot{\underline{\bu}}, \underline{\bv}_h}_{\cH_1} + 
		\inner{(\dot\bsig, \dot p ), (\btau_h,q_h) }_{\cH_2}  + \inner{\beta \bu_{f},\bv_{f,h}}_\Omega
		+ B_h((\bsig,p), (\underline{\bv}_{h}, \underline{\hat{\bv}}_{h} ))  
		\\& \quad 
	  - B_h((\btau_h,q_h), (\underline{\bu}, \underline{\bu}|_{\partial \cF_h} )) 
	  + \dual{ \tfrac{(k+1)^2}{h_\cF}(\underline{\bu} - \underline{\bu}|_{\partial \cF_h}) , \underline{\bv}_{h} - \underline{\hat{\bv}}_{h} }_{\partial \cT_h}  = \inner{\underline{\bF}, \underline{\bv}_h}_\Omega + \inner{g, q_h}_\Omega,
	\end{split}
\end{align}
for all $(\underline{\bv}_{h}, \underline{\hat{\bv}}_{h} ) \in \mathcal{H}_h \times \hat{\mathcal{H}}_h$ and $(\btau_h,q_h) \in  \cH_{2,h}$.
\end{proposition}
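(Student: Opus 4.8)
The plan is to show that the exact solution $(\underline{\bu},(\bsig,p))$, when inserted into the HDG form \eqref{sd} with the discrete test functions, produces exactly the same right-hand side as in \eqref{eq:weakform}. Concretely, I would start from the left-hand side of \eqref{consistent} and work towards $\inner{\underline{\bF},\underline{\bv}_h}_\Omega + \inner{g,q_h}_\Omega$. The crucial structural observation is that the single-valued trace $\underline{\bu}|_{\partial\cF_h}$ makes the stabilization term $\dual{\tfrac{(k+1)^2}{h_\cF}(\underline{\bu}-\underline{\bu}|_{\partial\cF_h}),\cdot}_{\partial\cT_h}$ vanish: since $\underline{\bu}\in\cC^0_{[0,T]}(\cX_1)$ has $\bu_s\in H^1_0(\Omega,\bbR^d)$ and $\bu_f\in H_N(\div,\Omega)$, its trace is continuous across interior faces and coincides with $\underline{\bu}|_{\partial\cF_h}$ there (and $\bu_s$ vanishes on $\Gamma$ by \eqref{newBC}, consistent with the definition of $\hat\cU$), so $\underline{\bu}-\underline{\bu}|_{\partial\cF_h}=0$ on $\partial\cT_h$ (the regularity $\bu_f\in H^s(\cT_h,\bbR^d)$, $s>\sfrac12$, makes the element-wise trace meaningful). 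Hence that whole term disappears.

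Next I would simplify $B_h((\btau_h,q_h),(\underline{\bu},\underline{\bu}|_{\partial\cF_h}))$. Writing out its definition, the jump terms $\dual{(\btau_h-\alpha q_h\mathrm I_d)\bn,\bu_s-\hat\bu_s}_{\partial\cT_h}$ and $\dual{q_h\bn,\bu_f-\hat\bu_f}_{\partial\cT_h}$ vanish for the same continuity reason as above, leaving only the volume contributions $\inner{\btau_h-\alpha q_h\mathrm I_d,\beps(\bu_s)}_{\cT_h}-\inner{q_h,\div\bu_f}_{\cT_h}$. Since $\bu_s\in H^1(\Omega,\bbR^d)$ and $\btau_h-\alpha q_h\mathrm I_d\in L^2$, these integrals equal $\inner{\btau_h-\alpha q_h\mathrm I_d,\beps(\bu_s)}_\Omega-\inner{q_h,\div\bu_f}_\Omega$; using the identity $\inner{\alpha\div\bu_s,q_h}_\Omega=\inner{\beps(\bu_s),\alpha q_h\mathrm I_d}_\Omega$ (valid because $\mathrm I_d:\beps(\bu_s)=\div\bu_s$) and comparing with the second line of \eqref{eq:weakform} tested against $(\btau_h,q_h)$, I obtain that $-B_h((\btau_h,q_h),(\underline{\bu},\underline{\bu}|_{\partial\cF_h})) + \inner{(\dot\bsig,\dot p),(\btau_h,q_h)}_{\cH_2}$ equals $\inner{g,q_h}_\Omega$ — but I must be careful here, because the weak form's second equation as written has $+\inner{\bdiv(\btau-\alpha q\mathrm I_d),\bu_s}_\Omega-\inner{\nabla q,\bu_f}_\Omega$, so I should instead integrate by parts \emph{inside} $B_h$ (using Green's formulas \eqref{green1}--\eqref{green2} element-wise, with boundary terms cancelling by continuity of $\underline{\bu}$'s trace and $\bu_s|_\Gamma=0$) to recast the volume part of $-B_h$ as exactly $\inner{\bdiv(\btau_h-\alpha q_h\mathrm I_d),\bu_s}_\Omega-\inner{\nabla q_h,\bu_f}_\Omega$, matching \eqref{eq:weakform} verbatim.

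Then I would handle $B_h((\bsig,p),(\underline{\bv}_h,\underline{\hat\bv}_h))$. Here I cannot drop the jump terms since $\underline{\bv}_h,\underline{\hat\bv}_h$ are genuinely discontinuous/hybrid, so instead I integrate by parts element-wise: $\inner{\bsig-\alpha p\mathrm I_d,\beps(\bv_{s,h})}_{\cT_h}=-\inner{\bdiv(\bsig-\alpha p\mathrm I_d),\bv_{s,h}}_{\cT_h}+\dual{(\bsig-\alpha p\mathrm I_d)\bn,\bv_{s,h}}_{\partial\cT_h}$ and similarly for the pressure term; the element-boundary terms $\dual{(\bsig-\alpha p\mathrm I_d)\bn,\bv_{s,h}-\hat\bv_{s,h}}_{\partial\cT_h}$, after combining with the jump contributions already in $B_h$, reduce to $\dual{(\bsig-\alpha p\mathrm I_d)\bn,\hat\bv_{s,h}}_{\partial\cT_h}$, which vanishes because $\bsig-\alpha p\mathrm I_d\in H_N(\bdiv,\Omega,\bbS)$ has single-valued normal trace and $\hat\bv_{s,h}$ is single-valued and supported away from $\Gamma^s_N=\emptyset$ here (with $\hat\bv_{f,h}$ paired against $p\bn$, whose normal trace is likewise single-valued, giving $\dual{p\bn,\hat\bv_{f,h}}_{\partial\cT_h}=0$ since $p=0$ on $\Gamma$). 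What remains of $B_h((\bsig,p),(\underline{\bv}_h,\underline{\hat\bv}_h))$ is precisely $-\inner{\bdiv(\bsig-\alpha p\mathrm I_d),\bv_{s,h}}_\Omega+\inner{\nabla p,\bv_{f,h}}_\Omega$, which together with $\inner{\dot{\underline{\bu}},\underline{\bv}_h}_{\cH_1}+\inner{\beta\bu_f,\bv_{f,h}}_\Omega$ is exactly the left-hand side of the first equation in \eqref{eq:weakform} tested against $\underline{\bv}_h\in\cX_1$... except $\underline{\bv}_h\notin\cX_1$ in general. This is the one genuine subtlety: the first weak equation is only guaranteed for test functions in $\cX_1$, whereas $\underline{\bv}_h\in\cH_{1,h}$ is broken. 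I would resolve it by noting that \eqref{eq:IVP}/\eqref{compact1} holds as an $L^2(\Omega)$ identity pointwise in $t$ (by the regularity in Theorem~\ref{thm:Hille-Yosida}, $\bdiv(\bsig-\alpha p\mathrm I_d)\in L^2$ and $\nabla p\in L^2$), so I can test it directly against $\underline{\bv}_h\in L^2(\Omega,\bbR^{d\times2})$ without needing $\underline{\bv}_h\in\cX_1$; the same remark covers testing \eqref{compact2} against $(\btau_h,q_h)$.

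Assembling the three pieces then yields \eqref{consistent}. The main obstacle, as flagged, is the element-boundary bookkeeping: correctly tracking which jump/trace terms cancel by the continuity of the exact traces versus which require the $H_N(\bdiv)$/$H_N(\div)$ single-valued normal-trace property together with the boundary conditions \eqref{newBC}, and making sure the integration-by-parts is applied in the direction that matches \eqref{eq:weakform}. I expect everything else — the $\alpha q\mathrm I_d$ identity, the trace-theorem justification of element-wise traces under $s>\sfrac12$, and invoking the strong $L^2$ form of the PDE to legitimize broken test functions — to be routine once the boundary terms are organized.
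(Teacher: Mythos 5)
Your overall strategy coincides with the paper's: element-wise integration by parts of $B_h((\bsig,p),(\underline{\bv}_h,\hat{\underline{\bv}}_h))$, cancellation of the face terms paired with $\hat{\underline{\bv}}_h$ through the single-valuedness of the normal traces of $\bsig-\alpha p\mathrm{I}_d$ and $p\mathrm{I}_d$ together with the boundary conditions \eqref{newBC}, vanishing of the stabilization term and of the jump terms in $B_h((\btau_h,q_h),(\underline{\bu},\underline{\bu}|_{\partial\cF_h}))$ by the single-valuedness of the exact traces, and legitimizing broken test functions through the strong form of the equations (the paper substitutes $[-\bdiv(\bsig-\alpha p\mathrm{I}_d)\mid\nabla p]=[\bF_s\mid\bF_f-\beta\bu_f]-\dot{\underline{\bu}}R$ for the momentum part, and for the mass/constitutive part uses Green's formula \eqref{green2} plus density of $H(\bdiv,\Omega,\bbS)\times H^1(\Omega)$ in $\cH_2$, which is equivalent to your remark about testing the strong $L^2$ identities).

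One intermediate step, however, would fail as written: the proposal to integrate by parts \emph{inside} $B_h((\btau_h,q_h),(\underline{\bu},\underline{\bu}|_{\partial\cF_h}))$ so as to recast its volume part as $\inner{\bdiv(\btau_h-\alpha q_h\mathrm{I}_d),\bu_s}_\Omega-\inner{\nabla q_h,\bu_f}_\Omega$ ``with boundary terms cancelling by continuity of $\underline{\bu}$'s trace''. The element-boundary terms produced by that integration by parts are of the form $\dual{(\btau_h-\alpha q_h\mathrm{I}_d)\bn,\bu_s}_{\partial\cT_h}$ and $\dual{q_h\,\bu_f\cdot\bn}_{\partial\cT_h}$; on interior faces they reduce to the \emph{jumps of the discrete test functions} against the (single-valued) traces of the exact solution, e.g. $\sum_{F\in\cF_h^0}\int_F\jump{(\btau_h-\alpha q_h\mathrm{I}_d)\bn}\cdot\bu_s$, and these do not vanish — continuity of $\bu_s$ and of $\bu_f\cdot\bn$ cannot remove jumps of $\btau_h,q_h$. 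Moreover, even after such a recast you could not invoke the second line of \eqref{eq:weakform} ``verbatim'', since that equation holds only for $(\btau,q)\in\cX_2$, and the element-wise divergence and gradient of the discontinuous pair $(\btau_h,q_h)$ are not the global ones. Fortunately this detour is unnecessary, and your own closing remark supplies the correct route: testing the strong equation \eqref{compact2} (equivalently, the paper's identity obtained from \eqref{eq:weakform} by Green's formula and density) against $(\btau_h,q_h)$ yields $\inner{(\dot\bsig,\dot p),(\btau_h,q_h)}_{\cH_2}=\inner{g,q_h}_\Omega+\inner{\btau_h-\alpha q_h\mathrm{I}_d,\beps(\bu_s)}_\Omega-\inner{q_h,\div\bu_f}_\Omega$, which cancels the volume form of $B_h((\btau_h,q_h),(\underline{\bu},\underline{\bu}|_{\partial\cF_h}))$ directly, with no integration by parts and no face terms. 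With that correction, your argument is exactly the paper's proof.
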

\begin{proof}
	The continuity of the normal components of $\bsig -  \alpha p \mathrm{I}_d$ and $p\mathrm{I}_d$ across the interelements of $\cT_h$ gives 
	\begin{align*}
		\begin{split}
			B_h((\bsig,p), (\underline{\bv}_{h}, \underline{\hat{\bv}}_{h} )) &= 
			\inner*{\bsig - \alpha p \mathrm{I}_d, \beps(\bv_{s,h})}_{\cT_h} 
			- \dual*{(\bsig - \alpha p \mathrm{I}_d)\bn, \bv_{s,h} - \hat{\bv}_{s,h}  }_{\partial \mathcal{T}_h} \\
			&\qquad - \inner*{p, \div\bv_{f,h} }_{\cT_h}  
			+ \dual*{p\bn, \bv_{f,h} - \hat{\bv}_{f,h}  }_{\partial \mathcal{T}_h} \\
			& = \inner*{\bsig - \alpha p \mathrm{I}_d, \nabla\bv_{s,h}}_{\cT_h} 
			- \dual*{(\bsig - \alpha p \mathrm{I}_d)\bn, \bv_{s,h}  }_{\partial \mathcal{T}_h} \\
			&\qquad - \inner*{p, \div\bv_{f,h} }_{\cT_h}  
			+ \dual*{p\bn, \bv_{f,h}   }_{\partial \mathcal{T}_h}.
		\end{split}
		\end{align*}
		Applying an elementwise integration by parts to the right-hand side of the previous identity, followed by the substitution $ \big[ -\bdiv(\bsig - \alpha p \mathrm{I}_d) \mid  \nabla p  \big]  = \big[\bF_s \mid \bF_f - \beta \bu_f\big] - \dot{\underline{\bu} } R$ yields 
		\begin{align}\label{b2} 
			\begin{split}
				B_h((\bsig,p), (\underline{\bv}_{h}, \underline{\hat{\bv}}_{h} )) &= - \inner{\bdiv(\bsig - \alpha p \mathrm{I}_d), \bv_{s,h}}_{\cT_h} + \inner{\nabla p, \bv_{f,h}}_{\cT_h} 
				\\&
				= -\inner{\dot{\underline{\bu}} R, \underline{\bv}_h }_{\Omega} - \inner{\beta \bu_f, \bv_{f,h}}_{\Omega} + \inner{\underline{\bF}(t), \underline{\bv}_{h}}_{\Omega}.	
			\end{split} 
		\end{align}
	
		On the other hand,  we have that 
		\begin{equation}\label{b1}
			B_h((\btau_h,q_h), (\bu, \bu|_{\cF_h})) = \inner*{\btau_h - \alpha q_h \mathrm{I}_d, \beps(\bu_s) }_{\cT_h} -  
			\inner*{q_h, \div\bu_f}_{\cT_h} \quad \forall (\btau_h, q_h) \in \cH_{2,h}.
		\end{equation}
		Substituting back \eqref{b2} and \eqref{b1} in \eqref{consistent}  gives  
		\begin{align}\label{consistent0}
			\begin{split}
				\inner*{\dot{\underline{\bu}}, \underline{\bv}_h}_{\cH_1} &+ 
				\inner{(\dot\bsig, \dot p ), (\btau_h,q_h) }_{\cH_2} + \inner{\beta \bu_{f},\bv_{f,h}}_\Omega + B_h((\bsig,p), (\underline{\bv}_{h}, \underline{\hat{\bv}}_{h} )) 
				\\& \quad 
				- B_h((\btau_h,q_h), (\underline{\bu}, \underline{\bu}|_{\partial \cF_h} )) 
				 + \dual{ \tfrac{(k+1)^2}{h_\cF}(\underline{\bu} - \underline{\bu}|_{\partial \cF_h}) , \underline{\bv}_{h} - \underline{\hat{\bv}}_{h} }_{\partial \cT_h}  
				 \\&
				 = \inner{(\dot\bsig, \dot p ), (\btau_h,q_h) }_{\cH_2} 
				 - \inner*{ \btau_h - \alpha q_h \mathrm{I}_d, \beps(\bu_s) }_{\cT_h} + \inner*{q_h, \div\bu_f}_{\cT_h} + \inner{\underline{\bF}(t), \underline{\bv}_{h}}_{\Omega},
			\end{split}
		\end{align}
		for all $(\btau_h, q_h)\in \mathcal H_{2,h}$ and $(\underline{\bv}_h , \hat{\underline{\bv} }_h)\in \cH_{1,h} \times \hat{\cH}_{1,h}$. Moreover, applying Green's formula \eqref{green2} in the second equation of \eqref{eq:weakform} and keeping in mind the density of the embedding $H(\bdiv,\Omega,\bbS)\times H^1(\Omega) \hookrightarrow \cH_{2}$ we obtain
		\begin{equation*}
			\inner{(\dot\bsig, \dot p ), (\btau,q) }_{\cH_2} = (g,q)_\Omega  + \inner*{\btau - \alpha q \mathrm{I}_d , \beps(\bu_s)}_\Omega - \inner*{q, \div \bu_f}_\Omega 
		\quad \forall (\btau,q) \in \cH_{2}.
		 \end{equation*}
		Using  this identity in \eqref{consistent0} gives the result.
\end{proof}

\section{Convergence analysis of the HDG method}\label{sec:convergence}

The convergence analysis of the HDG method \eqref{sd} follows standard procedures. Using the stability of the HDG method and the consistency result \eqref{consistent}, we prove that the projected errors 
\begin{align*}
    \begin{alignedat}{2}
        \be_{u_s, h}(t)      &:= \Pi^{k+1}_\cT \bu_s  - \bu_{s,h}, &
		\quad 
		\be_{u_f,h}(t)       &:= \Pi^{k+1}_\cT \bu_f  - \bu_{f,h}, \\
        \be_{\hat u_s, h}(t) &:= \Pi_\cF^{k+1}(\bu_s|_{\cF_h}) - \hat{\bu}_{s,h}, &\quad    
        \be_{\hat u_f, h}(t) &:= \Pi_\cF^{k+1}(\bu_f|_{\cF_h}) - \hat{\bu}_{f,h}, \\
        \be_{\sigma,h}(t)    &:= \Pi_\cT^k\bsig - \bsig_h, &\quad 
        e_{p,h}(t)         &:= \Pi_\cT^kp - p_h, 
    \end{alignedat}
\end{align*}
can be estimated in terms of the approximation errors
\begin{align*}
    \begin{alignedat}{2}
        \bchi_{u_s}(t)       &:= \bu_s  - \Pi^{k+1}_\cT \bu_s, &\quad 
        \bchi_{u_f}(t)       &:= \bu_f  - \Pi^{k+1}_\cT \bu_f, \\
        \bchi_{\hat u_s}(t)  &:= \bu_s|_{\cF_h} - \Pi_\cF^{k+1}(\bu_s|_{\cF_h}), &\quad 
        \bchi_{\hat u_f}(t)  &:= \bu_f|_{\cF_h} - \Pi_\cF^{k+1}(\bu_f|_{\cF_h}), \\
        \bchi_\sigma(t)      &:= \bsig - \Pi_\cT^k\bsig, &\quad 
        \chi_p(t)           &:= p - \Pi_\cT^kp.
    \end{alignedat}
\end{align*}
As before, we concatenate the error terms corresponding to velocities by defining
\begin{align*}
    \begin{alignedat}{2}
        \underline{\be}_{u,h}(t)       &:= [\be_{ u_s, h} \mid \be_{u_f, h}], &\quad 
        \underline{\be}_{\hat u,h}(t)       &:= [\be_{\hat u_s, h} \mid \be_{\hat u_f, h}], \\
        \underline{\bchi}_u(t)  &:= [ \bchi_{u_s} \mid \bchi_{u_f}], &\quad 
        \underline{\bchi}_{\hat u}(t)  &:= [\bchi_{\hat u_s}  \mid \bchi_{\hat u_f}].
    \end{alignedat}
\end{align*}

\begin{lemma}\label{stab_sd}
	Under the conditions of Proposition~\ref{consistency},  there exists a constant $C>0$ independent of $h$ and $k$ such that 
	\begin{align}\label{stab}
		\begin{split}
			&\max_{[0, T]}\norm{ \underline{\be}_{u,h}}^2_{\cH_1} + \max_{[0, T]} \norm{(\be_{\sigma,h},  e_{p,h})}^2_{\cH_2} + \int_0^T \norm{ \beta^{\sfrac12}  \be_{\bu_f,h}(s)}^2_{0,\Omega_f} \,\text{d}s
			\\& 
			+ \int_0^T \norm{\tfrac{k+1}{h_\cF^{\sfrac{1}{2}}}(\underline{\be}_{u,h} - \underline{\be}_{\hat{u}, h} )}^2_{0,\partial \cT_h}\,\text{d}s
 		 \leq C \int_0^T \Big(\norm{\tfrac{h_\cF^{\sfrac{1}{2}}}{k+1} \bchi_\sigma}^2_{0,\partial \cT_h}  +  \norm{\tfrac{h_\cF^{\sfrac{1}{2}}}{k+1} \chi_p}^2_{0,\partial \cT_h}  +  \abs{ (\underline{\bchi}_u,\underline{\bchi}_{\hat u})}^2_{\mathcal{U} \times \hat{\mathcal{U}}} \Big) \, \text{d}t.
		\end{split}
	\end{align} 
	\end{lemma}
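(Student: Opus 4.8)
The plan is to derive the error equation, test it with the projected errors, and exploit the monotonicity structure already visible in Lemma~\ref{lem:maximalMonotone}, together with Gronwall's inequality. First I would subtract the consistency identity \eqref{consistent} from the semi-discrete scheme \eqref{sd}. Writing $\bu_s - \bu_{s,h} = \bchi_{u_s} + \be_{u_s,h}$ and similarly for the other unknowns, and observing that the projection errors $\bchi_\sigma$, $\chi_p$, $\bchi_{u_s}$, $\bchi_{u_f}$ and their trace counterparts land outside the discrete spaces, I would move all projection-error contributions to the right-hand side. The key algebraic simplifications are the standard HDG projection orthogonalities: $\inner{\dot{\bchi}_{u_s},\bv_{s,h}}_\Omega = 0$ for $\bv_{s,h}\in\cP_{k+1}$, $\inner{(\dot{\bchi}_\sigma,\dot\chi_p),(\btau_h,q_h)}$ terms that vanish against $\cP_k$, and the fact that $\Pi_\cT^k$ commutes suitably with $\div$ and $\beps$ when paired against polynomials in $B_h$. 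After these cancellations the error equation reads, schematically,
\begin{equation*}
\inner{\dot{\underline{\be}}_{u,h},\underline{\bv}_h}_{\cH_1} + \inner{(\dot\be_{\sigma,h},\dot e_{p,h}),(\btau_h,q_h)}_{\cH_2} + \inner{\beta\be_{u_f,h},\bv_{f,h}}_\Omega + \mathcal{B}(\underline{\be}_{u,h},\underline{\be}_{\hat u,h};\be_{\sigma,h},e_{p,h}) = \mathcal{R}_h(\underline{\bv}_h,\underline{\hat\bv}_h,\btau_h,q_h),
\end{equation*}
where $\mathcal{B}$ collects the $B_h$ and stabilization terms and $\mathcal{R}_h$ is a residual functional built only from $\bchi_\sigma,\chi_p,\underline{\bchi}_u,\underline{\bchi}_{\hat u}$.

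Second, I would choose the test functions $\underline{\bv}_h = \underline{\be}_{u,h}$, $\underline{\hat\bv}_h = \underline{\be}_{\hat u,h}$, $(\btau_h,q_h) = (\be_{\sigma,h},e_{p,h})$. The crucial point, mirroring \eqref{eq:monotone1}, is that the antisymmetric pair $B_h((\be_{\sigma,h},e_{p,h}),(\underline{\be}_{u,h},\underline{\be}_{\hat u,h})) - B_h((\be_{\sigma,h},e_{p,h}),(\underline{\be}_{u,h},\underline{\be}_{\hat u,h}))$ cancels, leaving only the nonnegative stabilization term $\dual{\tfrac{(k+1)^2}{h_\cF}(\underline{\be}_{u,h}-\underline{\be}_{\hat u,h}),\underline{\be}_{u,h}-\underline{\be}_{\hat u,h}}_{\partial\cT_h} = \norm{\tfrac{k+1}{h_\cF^{\sfrac12}}(\underline{\be}_{u,h}-\underline{\be}_{\hat u,h})}^2_{0,\partial\cT_h}$ and the Darcy dissipation $\norm{\beta^{\sfrac12}\be_{u_f,h}}^2_{0,\Omega}$. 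The time-derivative terms combine into $\tfrac12\tfrac{d}{dt}\big(\norm{\underline{\be}_{u,h}}^2_{\cH_1} + \norm{(\be_{\sigma,h},e_{p,h})}^2_{\cH_2}\big)$. Thus, after integrating from $0$ to an arbitrary $\mt\in[0,T]$ and using that the initial errors vanish by the choice of initial data \eqref{initial-R1-R2-h*c}, I obtain the energy identity with the residual $\int_0^{\mt}\mathcal{R}_h$ on the right.

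Third, I would bound the residual. This is where the boundedness estimate \eqref{Bhh} for $B_h$ and the discrete trace inequality \eqref{discTrace} enter: terms of the form $B_h((\be_{\sigma,h},e_{p,h}),(\underline{\bchi}_u,\underline{\bchi}_{\hat u}))$ are controlled by $\norm{(\be_{\sigma,h},e_{p,h})}_{\cH_2}\abs{(\underline{\bchi}_u,\underline{\bchi}_{\hat u})}_{\mathcal{U}\times\hat{\mathcal{U}}}$, while terms like $B_h((\bchi_\sigma,\chi_p),(\underline{\be}_{u,h},\underline{\be}_{\hat u,h}))$ are controlled via \eqref{Bh} by $(\norm{(\bchi_\sigma,\chi_p)}^2_{\cH_2} + \norm{\tfrac{h_\cF^{\sfrac12}}{k+1}\bchi_\sigma}^2_{0,\partial\cT_h} + \norm{\tfrac{h_\cF^{\sfrac12}}{k+1}\chi_p}^2_{0,\partial\cT_h})^{\sfrac12}$ times the stabilization seminorm of $(\underline{\be}_{u,h},\underline{\be}_{\hat u,h})$; the latter is absorbed into the left-hand side dissipation using Young's inequality. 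I would also note that $\norm{(\bchi_\sigma,\chi_p)}^2_{\cH_2}\lesssim\norm{\bchi_\sigma}^2_{0,\cT_h}+\norm{\chi_p}^2_{0,\cT_h}$ is itself controlled by the right-hand side of \eqref{stab} (this requires a Poincaré-type or direct argument, since $\bchi_\sigma$ is a projection error in $L^2$, not in the stronger seminorm — one handles it by absorbing into a Gronwall term, since $\norm{(\be_{\sigma,h},e_{p,h})}^2_{\cH_2}$ also appears multiplied by $\norm{(\bchi_\sigma,\chi_p)}_{\cH_2}$). Collecting everything yields
\begin{equation*}
\norm{\underline{\be}_{u,h}(\mt)}^2_{\cH_1} + \norm{(\be_{\sigma,h},e_{p,h})(\mt)}^2_{\cH_2} + \int_0^{\mt}\!\!\big(\text{dissipation}\big)\,ds \;\lesssim\; \int_0^{\mt}\!\!\big(\text{RHS integrand}\big)\,dt + \int_0^{\mt}\!\!\big(\norm{\underline{\be}_{u,h}}^2_{\cH_1}+\norm{(\be_{\sigma,h},e_{p,h})}^2_{\cH_2}\big)\,ds,
\end{equation*}
and an application of Gronwall's lemma, followed by taking the maximum over $\mt\in[0,T]$, gives \eqref{stab}.

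The main obstacle I anticipate is the careful bookkeeping in the first step: verifying that every projection-error term genuinely factors out of the discrete equation or is harmless. In particular one must check that $B_h((\btau_h,q_h),(\underline{\bchi}_u,\underline{\bchi}_{\hat u}))$ does \emph{not} vanish (the $\beps$ and $\div$ of $\underline{\bchi}_u$ are not orthogonal to $\cP_k$ in general) and hence genuinely contributes to the residual — this is exactly why the right-hand side of \eqref{stab} contains $\abs{(\underline{\bchi}_u,\underline{\bchi}_{\hat u})}^2_{\mathcal{U}\times\hat{\mathcal{U}}}$ — whereas the term $\inner{(\dot{\bchi}_\sigma,\dot\chi_p),(\btau_h,q_h)}_{\cH_2}$ does \emph{not} vanish (because $\cA=\cC^{-1}$ is not polynomial), and so the time-derivative of the stress-pressure projection error is absorbed differently, either by integration by parts in time or by noting it contributes a bounded residual after using $\dot{\bchi}_\sigma = \bdot\bsig - \Pi_\cT^k\bdot\bsig$. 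A secondary subtlety is ensuring the $B_h$-antisymmetry cancellation is exact on the discrete level — it is, since $\be_{\sigma,h},e_{p,h}\in\cH_{2,h}$ and $(\underline{\be}_{u,h},\underline{\be}_{\hat u,h})\in\cH_{1,h}\times\hat{\cH}_{1,h}$ are legitimate test functions in both slots — and handling the $\inner{\beta\bchi_{u_f},\bv_{f,h}}_\Omega$ term, which is bounded by $\norm{\beta^{\sfrac12}\bchi_{u_f}}_{0,\Omega}\norm{\beta^{\sfrac12}\be_{u_f,h}}_{0,\Omega}$ and absorbed into the Darcy dissipation; however $\norm{\beta^{\sfrac12}\bchi_{u_f}}_{0,\Omega}$ must then be shown to be dominated by the right-hand side of \eqref{stab}, which follows from $\norm{\div\bchi_{u_f}}_{0,\cT_h}\le\abs{(\underline{\bchi}_u,\underline{\bchi}_{\hat u})}_{\mathcal{U}\times\hat{\mathcal{U}}}$ only after an additional $L^2$-bound on $\bchi_{u_f}$ itself, so one may prefer to keep that term on the right and invoke Gronwall once more.
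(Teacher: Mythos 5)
Your overall strategy---error equation obtained by subtracting the consistency identity \eqref{consistent} from \eqref{sd}, energy test functions $(\btau,q)=(\be_{\sigma,h},e_{p,h})$, $(\underline{\bv},\hat{\underline{\bv}})=(\underline{\be}_{u,h},\underline{\be}_{\hat u,h})$ exploiting the skew-symmetry of $B_h$ and the nonnegativity of the stabilization and Darcy terms, then integration in time using the vanishing projected initial errors from \eqref{initial-R1-R2-h*c}---is exactly the paper's argument. The gap is in your treatment of the projection-error terms, and your fallback routes would not prove \eqref{stab} as stated. First, the terms $\inner{\dot{\underline{\bchi}}_{u},\underline{\bv}}_{\cH_1}$, $\inner{\beta\bchi_{u_f},\bv_f}_\Omega$ and $\inner{(\dot{\bchi}_\sigma,\dot{\chi}_p),(\btau,q)}_{\cH_2}$ vanish \emph{exactly} for discrete test functions, because $R$, $\beta$, $\cA$ and $s$ are constant: e.g. $\inner{\cA\dot{\bchi}_\sigma,\btau}_\Omega=\inner{\dot{\bchi}_\sigma,\cA\btau}_\Omega=0$ since $\cA\btau\in\cP_k(\cT_h,\bbS)$ and $\dot{\bchi}_\sigma\perp\cP_k(\cT_h,\bbS)$, and likewise $\inner{\beta\bchi_{u_f},\bv_{f,h}}_\Omega=0$ since $\bv_{f,h}\in\cP_{k+1}(\cT_h,\bbR^d)$. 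Your closing paragraph asserts the opposite for the $\cH_2$ term and proposes to keep it (and the $\beta\bchi_{u_f}$ term) in the residual and invoke Gronwall; that would put quantities such as $\int_0^T\norm{\bchi_{u_f}}^2_{0,\Omega}\,\text{d}t$ and norms of $(\dot{\bchi}_\sigma,\dot{\chi}_p)$ on the right-hand side, which do not appear in \eqref{stab}, so the lemma as stated would not follow.

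Second, bounding $B_h((\bchi_\sigma,\chi_p),(\underline{\be}_{u,h},\underline{\be}_{\hat u,h}))$ by the generic estimate \eqref{Bh} is not viable: it produces the full seminorm $\abs{(\underline{\be}_{u,h},\underline{\be}_{\hat u,h})}_{\mathcal{U}\times\hat{\mathcal{U}}}$, whose volume contributions $\norm{\beps(\be_{u_s,h})}_{0,\cT_h}$ and $\norm{\div\be_{u_f,h}}_{0,\cT_h}$ are not controlled by the left-hand side of \eqref{stab} (only the stabilization and Darcy dissipation are), and it also drags in $\norm{(\bchi_\sigma,\chi_p)}_{\cH_2}$, again absent from the stated right-hand side; your proposed Poincar\'e/Gronwall repair changes the inequality rather than proving it. The missing observation is that for discrete $(\underline{\bv},\hat{\underline{\bv}})$ the volume terms of $B_h((\bchi_\sigma,\chi_p),\cdot)$ vanish, because $\beps(\cP_{k+1}(\cT_h,\bbR^d))\subset\cP_k(\cT_h,\bbS)$ and $\div(\cP_{k+1}(\cT_h,\bbR^d))\subset\cP_k(\cT_h)$ while $\bchi_\sigma,\chi_p\perp\cP_k$; hence $B_h((\bchi_\sigma,\chi_p),(\underline{\bv},\hat{\underline{\bv}}))=-\dual{(\bchi_\sigma-\alpha\chi_p\mathrm{I}_d)\bn,\bv_s-\hat{\bv}_s}_{\partial\cT_h}+\dual{\chi_p\bn,\bv_f-\hat{\bv}_f}_{\partial\cT_h}$, and these trace terms pair with the stabilization seminorm to give precisely the weighted trace norms of $\bchi_\sigma$ and $\chi_p$ in \eqref{stab}. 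With these cancellations the residual contains only the stated right-hand side quantities; the bound \eqref{Bhh} is needed only for $B_h((\be_{\sigma,h},e_{p,h}),(\underline{\bchi}_u,\underline{\bchi}_{\hat u}))$, as you used it, and no Gronwall is required---Cauchy--Schwarz in time followed by Young's inequality, absorbing into the maxima and the integrated dissipation on the left, closes the estimate.
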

	\begin{proof}
	By virtue of the consistency result \eqref{consistent},  it is straightforward that 
	 \begin{align}\label{orthog} 
		\begin{split}
			& \inner*{\dot{\underline{\be} }_{u,h}, \underline{\bv}}_{\cH_1} + 
			\inner{(\dot{\be}_{\sigma,h}, \dot{\be}_{p,h}), (\btau,q) }_{\cH_2}  + \inner{\beta \be_{u_f,h},\bv_f}_\Omega
			+ B_h((\be_{\sigma,h}, e_{p,h}), (\underline{\bv}, \underline{\hat{\bv}} )) 
			\\ 
			&\qquad \qquad  - B_h((\btau,q), (\underline{\be}_{u,h}, \underline{\be}_{\hat{u},h} )) 
			 +\dual{ \tfrac{(k+1)^2}{h_\cF}(\underline{\be}_{u,h} - \underline{\be}_{\hat{u}, h} ), \underline{\bv} - \underline{\hat{\bv}} }_{\partial \cT_h}
			 \\ &
			  = 
			  -\inner*{\dot{\underline{\bchi} }_{u}, \underline{\bv}}_{\cH_1} - 
			\inner{(\dot{\bchi}_{\sigma}, \dot{\bchi}_{p}), (\btau,q) }_{\cH_2}  - \inner{\beta \bchi_{u_f},\bv_f}_\Omega
			- B_h((\bchi_{\sigma}, \bchi_{p}), (\underline{\bv}, \underline{\hat{\bv}} )) 
			\\ &
			\qquad \qquad   + B_h((\btau,q), (\underline{\bchi}_{u}, \underline{\bchi}_{\hat{u}} )) 
			 - \dual{ \tfrac{(k+1)^2}{h_\cF}(\underline{\bchi}_{u} - \underline{\bchi}_{\hat{u}} ), \underline{\bv} - \underline{\hat{\bv}} }_{\partial \cT_h}
			 \\ &
			 = 
			\dual{(\bchi_\sigma - \alpha \chi_p \mathrm{I}_d)\bn, \bv_s - \hat{\bv}_s}_{\partial \cT_h} - \dual{\chi_p\bn, \bv_f - \hat{\bv}_f}_{\partial \cT_h}
			\\ &
			\qquad \qquad +   B_h((\btau,q), (\underline{\bchi}_{u}, \underline{\bchi}_{\hat{u}} )) 
			 - \dual{ \tfrac{(k+1)^2}{h_\cF}(\underline{\bchi}_{u} - \underline{\bchi}_{\hat{u}} ), \underline{\bv} - \underline{\hat{\bv}} }_{\partial \cT_h}
		\end{split} 
	\end{align}
	for all $(\btau,q) \in \mathcal H_{2,h}$ and $(\underline{\bv} , \hat{\underline{\bv}} )\in \mathcal{H}_{1,h} \times \hat{\mathcal{H}}_{1,h}$. 
The last identity is derived from the orthogonality properties 
 	\begin{align*}
		\inner{\dot{\underline{\bchi} }_{u}, \underline{\bv}}_{\cH_1} + \inner{\beta \bchi_{u_f},\bv_f}_\Omega =0 \quad \forall \underline{\bv} \in \cH_{1,h}, 
	\quad \text{and} \quad 
		\inner{(\dot{\bchi}_{\sigma}, \dot{\bchi}_{p}), (\btau,q) }_{\cH_2}   =0 \quad \forall (\btau,q)\in \cH_{2,h}, 
	\end{align*}
 	and from the fact that, for all   $(\underline{\bv} , \hat{\underline{\bv}} )\in \cH_{1,h} \times \hat{\cH}_{1,h}$, the following holds: 
 	\[
		B_h((\bchi_{\sigma}, \bchi_{p}), (\underline{\bv}, \underline{\hat{\bv}} )) = -\dual{(\bchi_\sigma - \alpha \chi_p \mathrm{I}_d)\bn, \bv_s - \hat{\bv}_s}_{\partial \cT_h} +  \dual{\chi_p\bn, \bv_f - \hat{\bv}_f}_{\partial \cT_h}.
 	\]
 	This result is further supported by the inclusions:
    \[
    \beps(\cP_{k+1}(\cT_h, \bbR^d)) \subset \cP_k(\cT_h, \bbS) \quad  \text{and} \quad  \div(\cP_{k+1}(\cT_h, \bbR^d)) \subset \cP_k(\cT_h).
    \]

The choices $\btau = \be_{\sigma,h}$, $q = e_{p,h}$, and $(\underline{\bv} ,\hat{\underline{\bv} }) = (\underline{\be}_{u,h}, \underline{\be}_{\hat{u},h} )$ in \eqref{orthog} and the Cauchy-Schwarz inequality together with \eqref{Bhh} yield 
\begin{align*}
	\frac12 \frac{\text{d}}{\text{d}t} &\big\{\norm{ \underline{\be}_{u,h}}^2_{\cH_1} 
	+ \norm{(\be_{\sigma,h},  e_{p,h})}^2_{\cH_2}  \big\} 
	+ \norm{ \beta^{\sfrac12}  \be_{\bu_f,h}}^2_{0,\Omega_f} 
	+ \norm{\tfrac{k+1}{h_\cF^{\sfrac{1}{2}}}(\underline{\be}_{u,h} - \underline{\be}_{\hat{u}, h} )}^2_{0,\partial \cT_h} \\
	&= \dual{(\bchi_\sigma - \alpha \chi_p \mathrm{I}_d)\bn, \be_{u_s,h} - \be_{\hat{u}_s,h}}_{\partial \cT_h} 
	- \dual{\chi_p\bn, \be_{u_f,h} - \be_{\hat{u}_f,h}}_{\partial \cT_h} \\
	&\quad + B_h((\be_{\sigma,h}, e_{p,h}), (\underline{\bchi}_{u}, \underline{\bchi}_{\hat{u}} )) 
	- \dual{ \tfrac{(k+1)^2}{h_\cF}(\underline{\bchi}_{u} - \underline{\bchi}_{\hat{u}} ), \underline{\be}_{u,h} - \underline{\be}_{\hat{u},h} }_{\partial \cT_h} \\
	&\leq \norm{\tfrac{h_\cF^{\sfrac{1}{2}}}{k+1} (\bchi_\sigma - \alpha \chi_p \mathrm{I}_d)\bn}_{0,\partial \cT_h} 
	\norm{\tfrac{k+1}{h_\cF^{\sfrac{1}{2}}} (\be_{u_s,h} - \be_{\hat{u}_s,h})}_{0,\partial \cT_h} \\
	&\quad + \norm{\tfrac{h_\cF^{\sfrac{1}{2}}}{k+1} \chi_p \bn}_{0,\partial \cT_h} 
	\norm{\tfrac{k+1}{h_\cF^{\sfrac{1}{2}}} (\be_{u_f,h} - \be_{\hat{u}_f,h})}_{0,\partial \cT_h} \\
	&\quad + C \norm{(\be_{\sigma,h}, e_{p,h})}_{\cH_2}  
	\abs{ (\underline{\bchi}_u,\underline{\bchi}_{\hat u})}_{\mathcal{U} \times \hat{\mathcal{U}}} \\
	&\quad + \norm{\tfrac{k+1}{h_\cF^{\sfrac{1}{2}}} (\underline{\bchi}_{u} - \underline{\bchi}_{\hat{u}})  }_{0,\partial \cT_h} 
	\norm{\tfrac{k+1}{h_\cF^{\sfrac{1}{2}}} (\underline{\be}_{u,h} - \underline{\be}_{\hat{u},h})  }_{0,\partial \cT_h}. 
\end{align*}
We notice that, because of assumption \eqref{initial-R1-R2-h*c}, the projected errors  satisfy vanishing initial conditions, namely, $\be_{\sigma,h}(0) = \mathbf 0$, $e_{p,h}(0) = 0$  and $(\underline{\be}_{u,h}(0), \underline{\be} _{\hat u,h}(0)) = (\mathbf 0, \mathbf 0)$. Hence, integrating over $t\in (0, T]$ and using again the Cauchy-Schwarz inequality we deduce that 
\begin{align*}
 		&\norm{ \underline{\be}_{u,h}}^2_{\cH_1} + \norm{(\be_{\sigma,h},  e_{p,h})}^2_{\cH_2} + \int_0^t \norm{ \beta^{\sfrac12}  \be_{\bu_f,h}(s)}^2_{0,\Omega_f} \,\text{d}s+ \int_0^t \norm{\tfrac{k+1}{h_\cF^{\sfrac{1}{2}}}(\underline{\be}_{u,h} - \underline{\be}_{\hat{u}, h} )}^2_{0,\partial \cT_h}\,\text{d}s
 		\\
 		&\qquad \lesssim \Big(\int_0^T (\norm{\tfrac{h_\cF^{\sfrac{1}{2}}}{k+1} (\bchi_\sigma - \alpha \chi_p \mathrm{I}_d)\bn}^2_{0,\partial \cT_h}  +  \norm{\tfrac{h_\cF^{\sfrac{1}{2}}}{k+1} \chi_p \bn}^2_{0,\partial \cT_h} + \norm{\tfrac{k+1}{h_\cF^{\sfrac{1}{2}}} (\underline{\bchi}_{u} - \underline{\bchi}_{\hat{u}})  }^2_{0,\partial \cT_h})  \text{d}t\Big)^{\sfrac{1}{2}}  
		\\ & \qquad \qquad \qquad \qquad \qquad \qquad \qquad \qquad \qquad \qquad \qquad
		\times 
 		\Big( \int_0^T \norm{\tfrac{k+1}{h_\cF^{\sfrac{1}{2}}} (\underline{\be}_{u,h} - \underline{\be}_{\hat{u},h})  }^2_{0,\partial \cT_h}\ \text{d}t\Big)^{\sfrac{1}{2}}
 		\\
 		&\qquad \qquad + \Big(\int_{0}^{T}\norm{(\be_{\sigma,h}, e_{p,h})}^2_{\cH_2} \text{d}t\Big)^{\sfrac12} \Big( \int_0^T \abs{ (\underline{\bchi}_u,\underline{\bchi}_{\hat u})}^2_{\mathcal{U} \times \hat{\mathcal{U}}}  \text{d}t \Big)^{\sfrac12},\quad \forall t\in (0, T].
\end{align*}
Finally, a simple application of Young's inequality yields 
\begin{align*}
 		 &\max_{[0, T]}\norm{ \underline{\be}_{u,h}}^2_{\cH_1} + \max_{[0, T]} \norm{(\be_{\sigma,h},  e_{p,h})}^2_{\cH_2} + \int_0^T \norm{ \beta^{\sfrac12}  \be_{\bu_f,h}(s)}^2_{0,\Omega_f} \,\text{d}s+ \int_0^T \norm{\tfrac{k+1}{h_\cF^{\sfrac{1}{2}}}(\underline{\be}_{u,h} - \underline{\be}_{\hat{u}, h} )}^2_{0,\partial \cT_h}\,\text{d}s
 		\\
 		&\qquad \qquad \lesssim \int_0^T \Big(\norm{\tfrac{h_\cF^{\sfrac{1}{2}}}{k+1} (\bchi_\sigma - \alpha \chi_p \mathrm{I}_d)\bn}^2_{0,\partial \cT_h}  +  \norm{\tfrac{h_\cF^{\sfrac{1}{2}}}{k+1} \chi_p \bn}^2_{0,\partial \cT_h}  +  \abs{ (\underline{\bchi}_u,\underline{\bchi}_{\hat u})}^2_{\mathcal{U} \times \hat{\mathcal{U}}} \Big) \, \text{d}t,
\end{align*}
and the result follows.
\end{proof}

As a consequence of the stability estimate \eqref{stab}, we immediately have the following convergence result for the HDG method \eqref{sd}-\eqref{initial-R1-R2-h*c}.
\begin{theorem}\label{hpConv}
Let
\[
\underline{\bu} = [\bu_s \mid \bu_f] \in \mathcal{C}_{[0,T]}^1(\cH_1) \cap \mathcal{C}^0_{[0,T]}(\cX_1)\quad \text{and} \quad (\bsig, p) \in \mathcal{C}^1_{[0,T]}(\cH_2) \cap \mathcal{C}^0_{[0,T]}(\cX_2)
\]
be the solutions of \eqref{eq:weakform}-\eqref{weakIC}. Assume that $\bsig \in \cC^0_{[0,T]}(H^{1+r}(\Omega, \mathbb{R}^{d\times d}_{\text{sym}}))$, $p \in \cC^0_{[0,T]}(H^{1+r}(\Omega))$ and $\underline{\bu}  \in \cC^0_{[0,T]}(H^{2+r}( \Omega, \bbR^{d \times 2}))$, with $r\geq 0$. Then, there exists a constant $C>0$ independent of $h$ and $k$ such that 
\begin{align*}
		 &\max_{[0, T]}\norm{ (\underline{\bu} - \underline{\bu}_h)(t)}_{\cH_1} + \max_{[0, T]} \norm{(\bsig - \bsig_h, p - p_h)(t)}_{\cH_2} + \left(  \int_0^T \norm{\tfrac{k+1}{h_\cF^{\sfrac{1}{2}}}(\underline{\bu}_h - \underline{\hat{\bu}}_{h} )}^2_{0,\partial \cT_h}\,\text{d}s \right)^{\sfrac{1}{2}}
		\\
		&\quad \leq  C \tfrac{h_K^{\min\{ r, k \}+1}}{(k+1)^{r+\sfrac12}} \Big( \max_{[0,T]}\norm{\underline{\bu} }^2_{2+r,\Omega} + \max_{[0, T]}\norm*{\btau}_{1+r, \Omega} + \max_{[0, T]}\norm*{p}_{1+r, \Omega}   \Big)\quad \forall k\geq 0. 
\end{align*}
\end{theorem}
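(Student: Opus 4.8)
The plan is to deduce the estimate directly from the stability bound \eqref{stab} of Lemma~\ref{stab_sd}, combined with the $hp$-approximation estimates of Lemmas~\ref{lem:maintool} and \ref{maintool2}. I would first check that the present regularity hypotheses are admissible: since $r\ge 0$ we have $1+r>\sfrac12$, so $\bsig-\alpha p\mathrm{I}_d\in\cC^0_{[0,T]}(H^{1+r}(\cT_h,\bbS))$ and $\underline{\bu}\in\cC^0_{[0,T]}(H^{2+r}(\cT_h,\bbR^{d\times2}))$, which is exactly what Proposition~\ref{consistency} (and hence Lemma~\ref{stab_sd}) requires; moreover $\underline{\bu}\in\cC^0_{[0,T]}(\cX_1)\cap\cC^0_{[0,T]}(H^{2+r}(\Omega,\bbR^{d\times2}))$ has single-valued strong traces on $\cF_h$, so $\underline{\bu}\in\cU^c\cap H^{2+r}(\Omega,\bbR^{d\times2})$ and Lemma~\ref{maintool2} applies.

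Next I would decompose the errors by inserting the projectors, writing $\underline{\bu}-\underline{\bu}_h=\underline{\bchi}_u+\underline{\be}_{u,h}$ and $(\bsig-\bsig_h,p-p_h)=(\bchi_\sigma,\chi_p)+(\be_{\sigma,h},e_{p,h})$, and, using that the exact velocity has a single-valued trace,
\[
\underline{\bu}_h-\underline{\hat{\bu}}_h=\bigl(\Pi^{k+1}_\cT\underline{\bu}-\Pi^{k+1}_\cF(\underline{\bu}|_{\cF_h})\bigr)-\bigl(\underline{\be}_{u,h}-\underline{\be}_{\hat u,h}\bigr)=-\underline{\bchi}_u+\underline{\bchi}_{\hat u}-\bigl(\underline{\be}_{u,h}-\underline{\be}_{\hat u,h}\bigr)\quad\text{on }\partial\cT_h.
\]
Applying the triangle inequality to the three terms in the left-hand side of the asserted estimate, the contributions of $\underline{\be}_{u,h}$, $(\be_{\sigma,h},e_{p,h})$ and $\tfrac{k+1}{h_\cF^{\sfrac12}}(\underline{\be}_{u,h}-\underline{\be}_{\hat u,h})$ are bounded by the square root of the left-hand side of \eqref{stab}, hence by the square root of its right-hand side, while the contributions of $\underline{\bchi}_u$, $(\bchi_\sigma,\chi_p)$ and $\tfrac{k+1}{h_\cF^{\sfrac12}}(\underline{\bchi}_u-\underline{\bchi}_{\hat u})$ remain to be estimated directly.

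It then only remains to bound these projection errors and the data appearing on the right-hand side of \eqref{stab}, namely $\norm{\tfrac{h_\cF^{\sfrac12}}{k+1}\bchi_\sigma}_{0,\partial\cT_h}$, $\norm{\tfrac{h_\cF^{\sfrac12}}{k+1}\chi_p}_{0,\partial\cT_h}$ and $\abs{(\underline{\bchi}_u,\underline{\bchi}_{\hat u})}_{\cU\times\hat{\mathcal{U}}}$. By \eqref{normH} and Lemma~\ref{lem:maintool} applied componentwise to $\bsig$ and $p$, the terms involving $\bchi_\sigma$ and $\chi_p$ (both in $\cH_2$ and on $\partial\cT_h$) are $\lesssim\tfrac{h_K^{\min\{r,k\}+1}}{(k+1)^{r+1}}(\norm{\bsig}_{1+r,\Omega}+\norm{p}_{1+r,\Omega})$; by Lemma~\ref{maintool2} together with the definition \eqref{norm:sym} of the $\cU\times\hat{\mathcal{U}}$-seminorm — which also majorizes $\norm{\tfrac{k+1}{h_\cF^{\sfrac12}}(\underline{\bchi}_u-\underline{\bchi}_{\hat u})}_{0,\partial\cT_h}$ — one gets $\abs{(\underline{\bchi}_u,\underline{\bchi}_{\hat u})}_{\cU\times\hat{\mathcal{U}}}\lesssim\tfrac{h^{\min\{r,k\}+1}}{(k+1)^{r+\sfrac12}}\norm{\underline{\bu}}_{2+r,\Omega}$; and $\norm{\underline{\bchi}_u}_{\cH_1}\lesssim\norm{\bchi_{u_s}}_{0,\Omega}+\norm{\bchi_{u_f}}_{0,\Omega}$ is controlled, by the standard $hp$-bound for the degree-$(k+1)$ $L^2$-projection applied to $\underline{\bu}\in H^{2+r}(\Omega,\bbR^{d\times2})$ (see \cite[Section~3]{meddahi2023hp}), by a quantity with a strictly larger power of both $h$ and $k+1$, hence absorbed. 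Since the stress/pressure bounds also carry a higher power of $k+1$ than $(k+1)^{-(r+\sfrac12)}$, the dominant contribution is the hybrid-trace term $\abs{(\underline{\bchi}_u,\underline{\bchi}_{\hat u})}_{\cU\times\hat{\mathcal{U}}}$, which produces exactly the claimed factor $\tfrac{h^{\min\{r,k\}+1}}{(k+1)^{r+\sfrac12}}$ (the loss of half a power in $k$ originates here); collecting everything, bounding the time integrals on the right of \eqref{stab} by $T$ times the maxima over $[0,T]$ of the regularity norms (absorbed into $C$), and using $h_F\le h_K\le h$, yields the estimate. The argument is otherwise routine bookkeeping; the only point requiring a little care is the rewriting of the skeleton term $\int_0^T\norm{\tfrac{k+1}{h_\cF^{\sfrac12}}(\underline{\bu}_h-\underline{\hat{\bu}}_h)}^2_{0,\partial\cT_h}\,\mathrm{d}s$ displayed above, where one must exploit the single-valuedness of the trace of $\underline{\bu}$ and recognize that the resulting projection-error jump is precisely one of the quantities measured by $\abs{(\underline{\bchi}_u,\underline{\bchi}_{\hat u})}_{\cU\times\hat{\mathcal{U}}}$ on the right-hand side of \eqref{stab}; no new estimate beyond Lemmas~\ref{stab_sd}, \ref{lem:maintool} and \ref{maintool2} is needed.
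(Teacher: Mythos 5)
Your proposal is correct and follows essentially the same route as the paper: split the errors via the projections, bound the projected errors by the stability estimate \eqref{stab} of Lemma~\ref{stab_sd}, and bound the projection errors (including the skeleton term, identified inside the seminorm \eqref{norm:sym}) via Lemmas~\ref{lem:maintool} and \ref{maintool2}. Your write-up merely spells out the bookkeeping (the decomposition of $\underline{\bu}_h-\underline{\hat{\bu}}_h$, the verification of the regularity hypotheses, and the comparison of powers of $k+1$) that the paper's proof leaves implicit.
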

\begin{proof}
		It follows from the triangle inequality and \eqref{stab} that 
	\begin{align*}
		&\max_{[0, T]}\norm{ (\underline{\bu} - \underline{\bu}_h)(t)}_{\cH_1} 
		+ \max_{[0, T]} \norm{(\bsig - \bsig_h, p - p_h)(t)}_{\cH_2} \\
		&\quad + \left( \int_0^T \norm{\tfrac{k+1}{h_\cF^{\sfrac{1}{2}}}
		((\underline{\bu} - \underline{\bu}_h) - ( \underline{\bu} - \underline{\hat{\bu}}_{h}) )}^2_{0,\partial \cT_h}\,\text{d}s \right)^{\sfrac{1}{2}} \\
		&\lesssim \max_{[0, T]}\norm{\underline{\bchi}_u(t)}^2_{\cH_1} 
		+ \max_{[0, T]}\norm{(\bchi_\sigma, \chi_p)(t)}^2_{\cH_2} \\
		&\quad + \left( \int_0^T \Big(\norm{\tfrac{h_\cF^{\sfrac{1}{2}}}{k+1} \bchi_\sigma}^2_{0,\partial \cT_h}  
		+ \norm{\tfrac{h_\cF^{\sfrac{1}{2}}}{k+1} \chi_p}^2_{0,\partial \cT_h}  
		+ \abs{ (\underline{\bchi}_u,\underline{\bchi}_{\hat u})}^2_{\mathcal{U} \times \hat{\mathcal{U}}} \Big) \, \text{d}t \right)^{\sfrac{1}{2}},
	\end{align*}
	and the result follows directly from the error estimates \eqref{tool1} and \eqref{tool2}.
\end{proof}
	
\begin{remark}\label{R1}
The energy norm error estimates in Theorem~\ref{hpConv} achieve quasi-optimality in $h$ but remain suboptimal in $k$ by a factor of $k^{\sfrac{1}{2}}$, a limitation noted in previous work such as Houston et al.~\cite{houston2002} for stationary second-order elliptic problems. The velocity fields error estimate is also suboptimal in $h$ by one order. While our numerical results demonstrate optimal convergence rates in practice, deriving a quasi-optimal $L^2$-norm error estimate for velocities remains an open challenge.
\end{remark}

\section{The fully discrete scheme and its convergence analysis}\label{sec:fully-discrete}

Given $L\in \mathbb{N}$, we consider a uniform partition of the time interval $[0, T]$ with step size $\Delta t := T/L$ and nodes $t_n := n\,\Delta t$, $n=0,\ldots, L$. The midpoint of each time subinterval is represented as $t_{n+\sfrac{1}{2}}:= \frac{t_{n+1} + t_n}{2}$. 

In what follows, we utilize the Crank-Nicolson method for the time discretisation of
\eqref{sd}-\eqref{initial-R1-R2-h*c}. Namely, for  $n=0,\ldots,L-1$, 
we seek  $(\underline{\bu}^{n+1}_h,  \underline{\hat{\bu}}^{n+1}_h) \in \cH_{1,h}\times \hat{\cH}_{1,h}$ and $(\bsig^{n+1}_h, p^{n+1}_h) \in  \mathcal{H}_{2,h}$ solution of 
\begin{align}\label{fd}
\begin{split}  
	&\tfrac{1}{\Delta t}\inner{ (\underline{\bu}^{n+1}_h - \underline{\bu}^n_h), \underline{\bv} }_{\cH_1} + \tfrac{1}{\Delta t}\inner{ (\bsig^{n+1}_h - \bsig^n_h, p^{n+1}_h - p^n_h), (\btau,q) }_{\cH_2}  + 
	\tfrac{1}{2}\inner{ \beta  (\bu^{n+1}_{f,h} + \bu^{n}_{f,h} ),  \bv_f}_{\Omega}
	\\& \quad  
	+ \tfrac12 B_h((\bsig^{n+1}_h + \bsig^n_h, p^{n+1}_h + p^n_h), (\underline{\bv}, \hat{\underline{\bv} }))    
	 - \tfrac12 B_h( (\btau, q) , (\underline{\bu} ^{n+1}_h + \underline{\bu} ^{n}_h, \hat{\underline{\bu} }_h^{n+1} +\hat{\underline{\bu} }_h^n ) ) 
	 \\ &\quad
	+\tfrac12 \dual{ \tfrac{(k+1)^2}{h_\cF}(\underline{\bu}^{n+1}_h + \underline{\bu}^{n}_h - \hat{\underline{\bu}}_h^{n+1} -\hat{\underline{\bu} }_h^n), \underline{\bv}  - \hat{\underline{\bv}}, }_{\partial \cT_h}
	\\ &
= \tfrac12 \inner{ \underline{\bF}(t_{n+1}) + \underline{\bF}(t_{n}), \underline{\bv}}_{\Omega} + \tfrac12 \inner{ g(t_{n+1}) + g(t_{n}), q}_{\Omega}
\end{split}
\end{align}
for all $(\underline{\bv}, \hat{\underline{\bv}})\in \cH_{1,h}\times \hat{\cH}_{1,h}$ and $(\btau,q)\in \mathcal H_{2,h}$. We assume that the scheme \eqref{fd} is initiated  with 
 \begin{equation}\label{initial-fd}
	\ubu^0_h = \Pi^{k+1}_\cT \ubu^0, \quad \hat{\ubu}^0_h  = \Pi^{k+1}_\cF (\ubu^0|_{\partial \mathcal{F}_h}), \quad \bsig^0_{h}= \Pi_\cT^k \bsig^0, \quad \text{and} \quad p^0_h = \Pi_\cT^k p^0.
 \end{equation}

We point out that each iteration step of \eqref{fd} requires solving a square system of linear equations whose matrix stems from the bilinear form 
\begin{align*}
	&\tfrac{1}{\Delta t}\inner{ \underline{\bu}, \underline{\bv} }_{\cH_1} + \tfrac{1}{\Delta t}\inner{ (\bsig, p), (\btau,q) }_{\cH_2}  + 
	\tfrac{1}{2}\inner{ \beta \bu_{f} ,  \bv_f}_{\Omega}
	\\
	& \qquad 
	+ \tfrac{1}{2} B_h((\bsig,p), (\underline{\bv},\hat{\underline{\bv}})) 
	- \tfrac{1}{2} B_h((\btau,q), (\underline{\bu} ,\hat{\underline{\bu} })) + \tfrac{1}{2}\dual{ \tfrac{(k+1)^2}{h_\cF}(\underline{\bu}  - \hat{\underline{\bu}} ), \underline{\bv}  - \hat{\underline{\bv}}}_{\partial \cT_h}.
\end{align*} 
The coerciveness of this bilinear form on $( (\cH_{1,h}\times \hat{\cH}_{1,h}) \times \cH_{2,h})\times ((\cH_{1,h}\times \hat{\cH}_{1,h}) \times \cH_{2,h})$ ensures the well-defined nature of the scheme \eqref{fd}-\eqref{initial-fd}. 

Our aim now is to obtain a fully discrete counterpart of Lemma~\ref{stab_sd}. We recall that, according to our notations, the components of the projected errors $\underline{\be}_{u,h}^n = [\be^n_{ u_s, h} \mid \be^n_{u_f, h}]\in \cH_{1,h} $, $\underline{\be}_{\hat u,h}^n= [\be^n_{\hat u_s, h} \mid \be^n_{\hat u_f, h}]\in  \hat{\cH}_{1,h}$, and $(\be_{\sigma,h}^n, e_{p,h}^n)\in \cH_{2,h}$ are expressed as 
\begin{align*}
    \begin{alignedat}{2}
        \be_{u_s, h}^n      &:= \Pi^{k+1}_\cT \bu_s(t_n)  - \bu^n_{s,h}, &\quad 
		\be_{u_f,h}^n       &:= \Pi^{k+1}_\cT \bu_f(t_n)  - \bu_{f,h}^n, \\
        \be_{\hat u_s, h}^n &:= \Pi_\cF^{k+1}(\bu_s(t_n)|_{\cF_h}) - \hat{\bu}^n_{s,h}, &\quad    
        \be_{\hat u_f, h}^n &:= \Pi_\cF^{k+1}(\bu_f(t_n)|_{\cF_h}) - \hat{\bu}_{f,h}^n, \\
        \be_{\sigma,h}^n    &:= \Pi_\cT^k\bsig(t_n) - \bsig^n_h, &\quad 
        e_{p,h}^n         &:= \Pi_\cT^k p(t_n) - p^n_h, 
    \end{alignedat}
\end{align*} 
while $\underline{\bchi}_{u,h}^n = [\bchi^n_{ u_s, h} \mid \bchi^n_{u_f, h}]$, $\underline{\bchi}_{\hat u,h}^n= [\bchi^n_{\hat u_s, h} \mid \bchi^n_{\hat u_f, h}]$, and $(\bchi_{\sigma,h}^n, \chi_{p,h}^n)$ are given by  
\begin{align*}
    \begin{alignedat}{2}
        \bchi_{u_s}^n       &:= \bu_s(t_n)  - \Pi^{k+1}_\cT \bu_s(t_n), &\quad 
        \bchi_{u_f}^n       &:= \bu_f(t_n)  - \Pi^{k+1}_\cT \bu_f(t_n), \\
        \bchi_{\hat u_s}^n  &:= \bu_s(t_n)|_{\cF_h} - \Pi_\cF^{k+1}(\bu_s(t_n)|_{\cF_h}), &\quad 
        \bchi_{\hat u_f}^n  &:= \bu_f(t_n)|_{\cF_h} - \Pi_\cF^{k+1}(\bu_f(t_n)|_{\cF_h}), \\
        \bchi_\sigma^n      &:= \bsig(t_n) - \Pi_\cT^k\bsig(t_n), &\quad 
        \chi_p^n           &:= p(t_n) - \Pi_\cT^k p(t_n).
    \end{alignedat}
\end{align*}

\begin{lemma}\label{coco}
	Let $\underline{\bu} = [\bu_s|\bu_f] \in \mathcal{C}_{[0,T]}^1(\cH_1) \cap \mathcal{C}^0_{[0,T]}(\cX_1)$ and  $(\bsig , p) \in \mathcal{C}^1_{[0,T]}(\cH_2) \cap \mathcal{C}^0_{[0,T]}(\cX_2)$ be the solutions of \eqref{eq:weakform}  and assume that $\bsig - \alpha p \mathrm{I}_d$ belongs  to  $\cC_{[0,T]}^0(H^s(\cT_h, \bbS))$ and $\underline{\bu}  \in \cC_{[0,T]}^0(H^s(\cT_h, \mathbb{R}^{d \times 2}))$, with $s>\sfrac{1}{2}$. Then, there exists a constant $C>0$ independent of $h$, $k$, and $\Delta t$ such that 
	\begin{align}\label{stabE} 
		\begin{split}
			  &\max_n\norm{ \underline{\be}^{n}_{u,h} }_{\cH_1}^2  + \max_n \norm{ (\be_{\sigma,h}^n,  e_{p,h}^n)}_{\cH_2}^2 + \tfrac{\Delta t}{4}  \sum_{n=0}^{L-1} \norm{\beta^{\sfrac12}(\be_{\bu_f,h}^{n+1} + \be_{\bu_f,h}^{n})}^2_{0,\Omega_f}
			  \\
			  &  \quad \quad + \tfrac{\Delta t}{4}  \sum_{n=0}^{L-1}\norm{ \tfrac{(k+1)}{h_\cF^{\sfrac{1}{2}}}(\underline{\be}^{n+1}_{u,h} + \underline{\be}^{n}_{u,h} - \underline{\be}_{\hat u, h}^{n+1} - \underline{\be}_{\hat u, h}^{n} ) }^2_{0,\partial \cT_h} 
			  \\
			  & \quad
			  \leq C \Big(  \Delta t\sum_{n=0}^{L-1} \norm{\underline{\Xi}^n_{\bu}}_{\cH_1}^2 + \Delta t\sum_{n=0}^{L-1} \norm{ (\Xi^n_{\bsig}, \Xi^n_{p}) }_{\cH_2}^2 + \Delta t \sum_{n=0}^{L-1}  \abs{(\bchi_u^{n+1} + \bchi_u^n, \bchi_{\hat u}^{n+1} + \bchi_{\hat u}^n )}_{\mathcal{U} \times \hat{\mathcal{U}}}^2
			 \\ 
			&  
			  \quad \quad  + \Delta t \sum_{n=0}^{L-1} \norm{\tfrac{h_\cF^{\sfrac{1}{2}}}{k+1} (\bchi_\sigma^{n+1} + \bchi_\sigma^{n})\bn}_{0,\partial \cT_h}^2 
			  + \Delta t \sum_{n=0}^{L-1} \norm{\tfrac{h_\cF^{\sfrac{1}{2}}}{k+1} (\chi_p^{n+1} + \chi_p^{n})}_{0,\partial \cT_h}^2 \Big),
		\end{split}
	\end{align}
	where the time consistency terms $\underline{\Xi}^n_{\bu}$, $\Xi^n_{\bsig}$, and $\Xi^n_{p}$, are defined as
	\[
	\begin{aligned}
		\underline{\Xi}^n_{\bu} &:= \frac{1}{\Delta t} (\underline{\bu}(t_{n+1}) - \underline{\bu}(t_n)) -  \tfrac12 ( \dot{\underline{\bu}}(t_{n+1}) + \dot{\underline{\bu}}(t_{n})), \\
		\Xi^n_{\bsig} &:= \tfrac{1}{\Delta t}(\bsig(t_{n+1}) - \bsig(t_n)) - \tfrac12 (\dot\bsig(t_{n+1}) + \dot\bsig(t_n)), \\
		\Xi^n_{p} &:= \tfrac{1}{\Delta t}(p(t_{n+1}) - p(t_n)) - \tfrac12 ( \dot p(t_{n+1}) + \dot p(t_n)).
	\end{aligned}
	\]
	\end{lemma}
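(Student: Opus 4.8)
The plan is to mimic the semidiscrete proof of Lemma~\ref{stab_sd}, replacing time derivatives by Crank--Nicolson difference quotients and carefully tracking the resulting time-consistency defects. First I would write the \emph{fully discrete error equation}: subtract \eqref{fd} evaluated at the numerical solution from the consistency identity \eqref{consistent} evaluated at the time average $\tfrac12(t_{n+1}+t_n)$ of the exact solution. The key point is that the Crank--Nicolson scheme is centered, so the exact solution satisfies the discrete equation up to the defects $\underline{\Xi}^n_{\bu}$, $\Xi^n_{\bsig}$, $\Xi^n_{p}$ defined in the statement (these measure the gap between the difference quotient and the midpoint average of the derivative). After splitting $\bu(t_\bullet)-\bu^\bullet_h = \bchi^\bullet + \underline{\be}^\bullet_{u,h}$ (similarly for $(\bsig,p)$ and the hats), and using the orthogonality properties already exploited in Lemma~\ref{stab_sd} — namely $\inner{\dot{\underline{\bchi}}_u,\underline\bv}_{\cH_1}+\inner{\beta\bchi_{u_f},\bv_f}_\Omega=0$ on $\cH_{1,h}$, $\inner{(\dot\bchi_\sigma,\dot\chi_p),(\btau,q)}_{\cH_2}=0$ on $\cH_{2,h}$, and the identity $B_h((\bchi_\sigma,\chi_p),(\underline\bv,\hat{\underline\bv}))=-\dual{(\bchi_\sigma-\alpha\chi_p\mathrm I_d)\bn,\bv_s-\hat\bv_s}_{\partial\cT_h}+\dual{\chi_p\bn,\bv_f-\hat\bv_f}_{\partial\cT_h}$ for discrete test functions — I obtain an equation for the projected errors with right-hand side consisting of: the boundary $\bchi$ terms, the stabilization $\bchi$ term, the $B_h$ term with $(\underline\bchi_u,\underline\bchi_{\hat u})$ bounded by \eqref{Bhh}, and the three consistency defects $\underline\Xi$ tested against the projected errors.

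Next I would choose the test functions as the natural energy quantities: $(\underline\bv,\hat{\underline\bv})=(\underline{\be}^{n+1}_{u,h}+\underline{\be}^{n}_{u,h},\,\underline{\be}^{n+1}_{\hat u,h}+\underline{\be}^{n}_{\hat u,h})$ and $(\btau,q)=(\be^{n+1}_{\sigma,h}+\be^{n}_{\sigma,h},\,e^{n+1}_{p,h}+e^n_{p,h})$. The skew-symmetric $B_h$ coupling cancels exactly as in the continuous case, the stabilization term produces $\tfrac1{\Delta t}\norm{\tfrac{k+1}{h_\cF^{1/2}}(\underline{\be}^{n+1}_{u,h}+\underline{\be}^{n}_{u,h}-\underline{\be}^{n+1}_{\hat u,h}-\underline{\be}^{n}_{\hat u,h})}^2_{0,\partial\cT_h}$, the $\beta$ term gives $\tfrac1{\Delta t}\norm{\beta^{1/2}(\be^{n+1}_{u_f,h}+\be^{n}_{u_f,h})}^2_{0,\Omega_f}$, and — crucially — the difference-quotient terms telescope: $\inner{\tfrac1{\Delta t}(\underline{\be}^{n+1}_{u,h}-\underline{\be}^n_{u,h}),\underline{\be}^{n+1}_{u,h}+\underline{\be}^n_{u,h}}_{\cH_1}=\tfrac1{\Delta t}(\norm{\underline{\be}^{n+1}_{u,h}}_{\cH_1}^2-\norm{\underline{\be}^{n}_{u,h}}_{\cH_1}^2)$, and similarly in $\cH_2$. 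Multiplying by $\tfrac{\Delta t}{2}$ and summing over $n=0,\dots,m-1$ collapses the left side to $\tfrac12\norm{\underline{\be}^{m}_{u,h}}_{\cH_1}^2+\tfrac12\norm{(\be^m_{\sigma,h},e^m_{p,h})}_{\cH_2}^2$ plus the positive stabilization/damping sums, using the vanishing initial errors guaranteed by \eqref{initial-fd}.

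For the right-hand side I would apply Cauchy--Schwarz on $\partial\cT_h$ (pairing $\tfrac{h_\cF^{1/2}}{k+1}\bchi$ with $\tfrac{k+1}{h_\cF^{1/2}}\be$), the bound \eqref{Bhh} for the $B_h$ term, and plain Cauchy--Schwarz in $\cH_1,\cH_2$ for the consistency-defect terms $\inner{\underline\Xi^n_{\bu},\underline{\be}^{n+1}_{u,h}+\underline{\be}^{n}_{u,h}}_{\cH_1}$ and so on. Then Young's inequality absorbs every factor involving the projected errors into the left-hand side (the stabilization sum absorbs the $\partial\cT_h$ pairings; a factor $\norm{(\be^n_{\sigma,h},e^n_{p,h})}_{\cH_2}$ or $\norm{\underline{\be}^n_{u,h}}_{\cH_1}$ times a consistency defect is handled by Young with a small parameter, leaving $\tfrac12\Delta t\sum\norm{\be^n}^2$ terms on the right). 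Those leftover quadratic sums of the errors are controlled by a \textbf{discrete Grönwall inequality}, which is the one genuinely new ingredient compared with Lemma~\ref{stab_sd} and is the main technical obstacle: one has to arrange the estimate in the form $E_m \le C\,\mathrm{RHS} + C\,\Delta t\sum_{n=0}^{m-1}E_n$ (with $E_n$ the running energy), requires $\Delta t$ small enough that the implicit $E_m$ coefficient is absorbed, and then invokes the discrete Grönwall lemma to conclude $\max_m E_m \lesssim \mathrm{RHS}$ with a constant depending only on $T$. The bookkeeping of which terms carry an $E_{n+1}$ versus $E_n$ (so that the implicit term at step $m$ can be moved to the left for $\Delta t$ below a fixed threshold) is where care is needed; everything else is a routine transcription of the semidiscrete argument.
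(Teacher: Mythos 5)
Your argument tracks the paper's proof almost step for step up to the very last stage: the fully discrete error equation obtained from the consistency identity \eqref{consistent} (averaged at $t_n$ and $t_{n+1}$), the $L^2$-orthogonality of the projections, the identity for $B_h((\bchi_\sigma,\chi_p),(\underline{\bv},\hat{\underline{\bv}}))$ on discrete test functions, the Crank--Nicolson test choice $\tfrac12(\be^{n+1}_{\sigma,h}+\be^{n}_{\sigma,h}, e^{n+1}_{p,h}+e^{n}_{p,h})$, $\tfrac12(\underline{\be}^{n+1}_{u,h}+\underline{\be}^{n}_{u,h},\underline{\be}^{n+1}_{\hat u,h}+\underline{\be}^{n}_{\hat u,h})$, the telescoping of the difference quotients, summation using the vanishing initial errors \eqref{initial-fd}, and Cauchy--Schwarz together with \eqref{Bhh} on the right-hand side are exactly the paper's steps (the factors $\tfrac{1}{\Delta t}$ you attach to the $\beta$- and stabilization terms should be $\tfrac12$, a harmless bookkeeping slip). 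Where you genuinely diverge is the absorption step: the paper uses no discrete Gr\"onwall inequality. After summing, each right-hand term carries either $\norm{\underline{\be}^{n+1}_{u,h}+\underline{\be}^{n}_{u,h}}_{\cH_1}$, $\norm{(\be^{n+1}_{\sigma,h}+\be^{n}_{\sigma,h},e^{n+1}_{p,h}+e^{n}_{p,h})}_{\cH_2}$, or the boundary norm of the projected errors; the paper bounds the first two by $2\max_n(\cdot)$, so the defect and $\abs{(\cdot,\cdot)}_{\mathcal{U}\times\hat{\mathcal{U}}}$ sums appear as $\max_n\norm{\cdot}\cdot\Delta t\sum_n\norm{\Xi^n}$, and then one Young step plus the Cauchy--Schwarz-in-time bound $\Delta t\sum_n\norm{\Xi^n}\le\big(T\,\Delta t\sum_n\norm{\Xi^n}^2\big)^{\sfrac12}$ absorbs the $\max_n$ factors into the left-hand side, while the boundary pairings are absorbed into the stabilization sum. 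No quadratic error sums survive, hence no Gr\"onwall and, crucially, no condition on $\Delta t$.

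This matters because, as you wrote it, your route proves only a conditional version of the lemma: you explicitly require $\Delta t$ below a threshold so that the implicit $E_m$ contribution produced by Young at the step $n=m-1$ can be moved to the left, whereas the statement asserts a constant independent of $\Delta t$ with no timestep restriction. The restriction (and Gr\"onwall itself) is avoidable even inside your scheme: choose the Young parameter $\epsilon$ of order $1/T$, so that the implicit coefficient $\epsilon\,\Delta t\le\epsilon T\le\tfrac14$ is absorbed for every admissible $\Delta t$, and then observe that the leftover $\epsilon\,\Delta t\sum_n E_n\le\epsilon T\max_n E_n$ can be absorbed directly into the left-hand side, which already controls $\max_n E_n$. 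With that adjustment your proof coincides, up to constants, with the paper's unconditional argument.
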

	\begin{proof}
	It follows from the consistency equation \eqref{consistent} and the orthogonality properties employed in the proof of Lemma~\ref{stab_sd} that the projected errors $(\be_{\sigma,h}^n, e_{p,h}^n)\in \mathcal{H}_{2,h}$ and $(\underline{\be}_{u,h}^n, \underline{\be}_{\hat{u},h}^n)\in \mathcal{H}_{1,h} \times \hat{\mathcal{H}}_{1,h}$ satisfy the equation  
	\begin{align}\label{proE} 
	\begin{split}
		&\tfrac{1}{\Delta t}\inner{ \underline{\be}_{u,h}^{n+1} - \underline{\be}_{u,h}^n, \underline{\bv} }_{\cH_1} + \tfrac{1}{\Delta t}\inner{(\be_{\sigma,h}^{n+1}, e_{p,h}^{n+1}) - (\be_{\sigma,h}^n, e_{p,h}^n), (\btau, q) }_{\cH_2}  +  \tfrac12\inner{\beta (\be_{\bu_f,h}^{n+1} + \be_{\bu_f,h}^{n}),  \bv_f}_{\Omega}  
		\\ &\qquad
		+ \tfrac12 B_h((\be_{\sigma,h}^{n+1} + \be_{\sigma,h}^n, e_{p,h}^{n+1} + e_{p,h}^n), (\underline{\bv} ,\hat{\underline{\bv} })) 
		- \tfrac12 B_h( (\btau, q), (\underline{\be}^{n+1}_{u,h} + \underline{\be}^{n}_{u,h}, \underline{\be}^{n+1}_{\hat u, h} + \underline{\be}^n_{\hat u, h} )) 
		\\ &\qquad
		+\tfrac12 \dual{ \tfrac{(k+1)^2}{h_\cF}(\underline{\be}^{n+1}_{u,h} + \underline{\be}^{n}_{u,h} - \underline{\be}^{n+1}_{\hat u, h} - \underline{\be}^n_{\hat u, h} ), \underline{\bv}  - \hat{\underline{\bv} }}_{\partial \cT_h}
		\\
	& = \inner{\underline{\Xi}^n_{\bu}, \underline{\bv} }_{\cH_1} + \inner{(\Xi^n_{\bsig}, \Xi^n_{p}), (\btau,q)}_{\cH_2} 
		\\
		&\qquad  + \tfrac{1}{2} \dual{(\bchi_\sigma^{n+1} + \bchi_\sigma^{n} - \alpha (\chi_{p}^{n+1} + \chi_{p}^n) \mathrm{I}_d )\bn, \bv_s  - \hat{\bv}_s}_{\partial \cT_h}
		-  \tfrac{1}{2} \dual{(\chi_p^{n+1} + \chi_p^n)\bn, \bv_f - \hat{\bv}_f}_{\partial \cT_h} 
		\\ 
		&\qquad + \tfrac{1}{2} B_h((\btau,q), (\underline{\bchi}_u^{n+1} + \underline{\bchi}_u^{n}, \underline{\bchi}_{\hat u}^{n+1} + \underline{\bchi}_{\hat u}^{n} )) 
		- \tfrac{1}{2} \dual{ \tfrac{(k+1)^2}{h_\cF}(\underline{\bchi}_u^{n+1} + \underline{\bchi}_u^{n} - \underline{\bchi}_{\hat u}^{n+1} - \underline{\bchi}_{\hat u}^{n} ), \underline{\bv} - \hat{\underline{\bv} }}_{\partial \cT_h}
	\end{split}
	\end{align}
	for all $(\underline{\bv}, \hat{\underline{\bv}})\in \cH_{1,h} \times \hat{\mathcal{H}}_{1,h}$ and $(\btau,q)\in \mathcal H_{2,h}$.
	Selecting $(\btau, q) = \tfrac{1}{2} (\be_{\sigma,h}^{n+1} + \be_{\sigma,h}^n, e_{p,h}^{n+1} + e_{p,h}^n)$ and $(\underline{\bv} , \hat{\underline{\bv} }) = \tfrac{1}{2} (\underline{\be}^{n+1}_{u,h} + \underline{\be}^{n}_{u,h}, \underline{\be}^{n+1}_{\hat u, h} + \underline{\be}^n_{\hat u, h} )$ in equation \eqref{proE} and applying \eqref{Bhh} and the Cauchy-Schwartz inequality to the terms on the right-hand side we derive the estimate 
	\begin{align*}
		\begin{split}
			&\tfrac{1}{2\Delta t}\left(\norm{ \underline{\be}^{n+1}_{u,h} }_{\cH_1}^2 +  \norm{ (\be_{\sigma,h}^{n+1}, e_{p,h}^{n+1}) }_{\cH_2}^2 -  \norm{ \underline{\be}^{n}_{u,h} }_{\cH_1}^2   - \norm{ (\be_{\sigma,h}^{n}, e_{p,h}^{n}) }_{\cH_2}^2  \right) 
			\\
			 &\qquad  + \tfrac{1}{4}\norm{\beta^{\sfrac12} (\be_{\bu_f,h}^{n+1} + \be_{\bu_f,h}^n)}^2_{\Omega}  
			+ \tfrac{1}{4} \norm{ \tfrac{k+1}{h_\cF^{\sfrac{1}{2}}}(\underline{\be}^{n+1}_{u,h} + \underline{\be}^{n}_{u,h} - \underline{\be}^{n+1}_{\hat u, h} - \underline{\be}^n_{\hat u, h} )}^2_{0,\partial \cT_h}  
			\\
			&\quad  \leq \tfrac{1}{2} \norm{\underline{\Xi}^n_{\bu}}_{\cH_1} \norm{ \underline{\be}^{n+1}_{u,h} + \underline{\be}^{n}_{u,h} }_{\cH_1}  
			+ \tfrac{1}{2} \norm{(\Xi^n_{\bsig}, \Xi^n_{p})}_{\cH_2} \norm{(\be_{\sigma,h}^{n+1} + \be_{\sigma,h}^n, e_{p,h}^{n+1} + e_{p,h}^n)}_{\cH_2}
			\\
			&\qquad    + \tfrac{1}{4}  \norm{\tfrac{h_\cF^{\sfrac{1}{2}}}{k+1} (\bchi_\sigma^{n+1} + \bchi_\sigma^{n} - \alpha (\chi_{p}^{n+1} + \chi_{p}^n) \mathrm{I}_d)\bn)}_{0,\partial \cT_h} \norm{ \tfrac{k+1}{h_\cF^{\sfrac{1}{2}}} 
				(\be^{n+1}_{u_s,h} + \be^{n}_{u_s,h} - \be^{n+1}_{\hat{u}_s, h} - \be^n_{\hat{u}_s, h} )
				}_{0,\partial \cT_h}
			\\
			&\qquad
			+ \tfrac{1}{4}  \norm{\tfrac{h_\cF^{\sfrac{1}{2}}}{k+1} (\chi_p^{n+1} + \chi_p^{n} )\bn)}_{0,\partial \cT_h} \norm{ \tfrac{k+1}{h_\cF^{\sfrac{1}{2}}} 
				(\be^{n+1}_{u_f,h} + \be^{n}_{u_f,h} - \be^{n+1}_{\hat{u}_f, h} - \be^n_{\hat{u}_f, h} )
				}_{0,\partial \cT_h}
			\\
			&\qquad 
			+ \tfrac{C}{4} \norm{(\be_{\sigma,h}^{n+1} + \be_{\sigma,h}^n, e_{p,h}^{n+1} + e_{p,h}^n )}_{\cH_2} \abs{ (\underline{\bchi}_u^{n+1} + \underline{\bchi}_u^{n}, \underline{\bchi}_{\hat u}^{n+1} + \underline{\bchi}_{\hat u}^{n} ) }_{\mathcal{U} \times \hat{\mathcal{U}}} 
		  \\
			&\qquad 
		  + \tfrac{1}{4} \norm{ \tfrac{k+1}{h^{\sfrac{1}{2}}_\cF}(\underline{\bchi}_u^{n+1} + \underline{\bchi}_u^{n} - \underline{\bchi}_{\hat u}^{n+1} - \underline{\bchi}_{\hat u}^{n})}_{0,\partial \cT_h} \norm{ \tfrac{k+1}{h^{\sfrac{1}{2}}_\cF}(\underline{\be}^{n+1}_{u,h} + \underline{\be}^{n}_{u,h} - \underline{\be}^{n+1}_{\hat u, h} - \underline{\be}^n_{\hat u, h} )}_{0,\partial \cT_h}.
		\end{split}
	\end{align*}
	Summing in the index $n$ and taking into account that the projected errors vanish  identically at the  initial step we  get 
	\begin{align*}
		\begin{split}
			\max_n&\norm{ \underline{\be}^{n}_{u,h} }_{\cH_1}^2  + \max_n \norm{ (\be_{\sigma,h}^{n}, e_{p,h}^{n}) }_{\cH_2}^2 + \tfrac{\Delta t}{4}  \sum_{n=0}^{L-1} \norm{\beta^{\sfrac12} (\be_{\bu_f,h}^{n+1} + \be_{\bu_f,h}^n)}^2_{\Omega} 
			\\ & \quad 
			+ \tfrac{\Delta t}{4}  \sum_{n=0}^{L-1}\norm{ \tfrac{k+1}{h_\cF^{\sfrac{1}{2}}}(\underline{\be}^{n+1}_{u,h} + \underline{\be}^{n}_{u,h} - \underline{\be}^{n+1}_{\hat u, h} - \underline{\be}^n_{\hat u, h} )}^2_{0,\partial \cT_h} 
			\\
			 & \leq \max_n\norm{ \underline{\be}^{n}_{u,h} }_{\cH_1} \Big( \Delta t\sum_{n=0}^{L-1} \norm{\underline{\Xi}^n_{\bu}}_{\cH_1}   \Big)
			  + \max_n \norm{ (\be_{\sigma,h}^{n}, e_{p,h}^{n}) }_{\cH_2} \Big(\Delta t\sum_{n=0}^{L-1} \norm{(\Xi^n_{\bsig}, \Xi^n_{p}}_{\cH_2} \Big) 
		\\
		& \quad+ \tfrac{\Delta t}{4}  \sum_{n=0}^{L-1} \norm{\tfrac{h_\cF^{\sfrac{1}{2}}}{k+1} (\bchi_\sigma^{n+1} + \bchi_\sigma^{n} - \alpha (\chi_{p}^{n+1} + \chi_{p}^n) \mathrm{I}_d)\bn}_{0,\partial \cT_h} \norm{ \tfrac{k+1}{h_\cF^{\sfrac{1}{2}}}(\be^{n+1}_{u_s,h} + \be^{n}_{u_s,h} - \be^{n+1}_{\hat{u}_s, h} - \be^n_{\hat{u}_s, h} )}_{0,\partial \cT_h}  
		\\
		&\quad +
		\tfrac{\Delta t}{4}  \sum_{n=0}^{L-1} \norm{\tfrac{h_\cF^{\sfrac{1}{2}}}{k+1} (\chi_p^{n+1} + \chi_p^{n} )\bn}_{0,\partial \cT_h} \norm{ \tfrac{k+1}{h_\cF^{\sfrac{1}{2}}} 
		(\be^{n+1}_{u_f,h} + \be^{n}_{u_f,h} - \be^{n+1}_{\hat{u}_f, h} - \be^n_{\hat{u}_f, h} )
		}_{0,\partial \cT_h}
		\\
		&\quad +  C \Delta t \max_n \norm{ (\be_{\sigma,h}^{n}, e_{p,h}^{n}) }_{\cH_2}  \sum_{n=0}^{L-1}  \abs{ (\underline{\bchi}_u^{n+1} + \underline{\bchi}_u^{n}, \underline{\bchi}_{\hat u}^{n+1} + \underline{\bchi}_{\hat u}^{n} ) }_{\mathcal{U} \times \hat{\mathcal{U}}} 
		\\
		&\quad + \tfrac{\Delta t}{4} \sum_{n=0}^{L-1} \norm{ \tfrac{k+1}{h^{\sfrac{1}{2}}_\cF}(\bchi_u^{n+1} + \bchi_u^{n} - \bchi_{\hat u}^{n+1} - \bchi_{\hat u}^{n})}_{0,\partial \cT_h} \norm{ \tfrac{k+1}{h^{\sfrac{1}{2}}_\cF}(\be^{n+1}_{u,h} + \be^{n}_{u,h} - \be^{n+1}_{\hat u, h} -  \be^n_{\hat u, h} )}_{0,\partial \cT_h}.
		\end{split}
	\end{align*}
	Applying the Cauchy-Schwartz inequality along with  Young's inequality $2a b \leq \frac{a^2}{\epsilon} +  \epsilon b^2$, where a suitable $\epsilon > 0$ is chosen in each instance, yields \eqref{stabE}. 
	\end{proof}
	
	To handle the time-consistency terms in \eqref{stabE}, we rely on a Taylor expansion centered at $t=t_{n+\sfrac{1}{2}}$, yielding the identity:
	\[
	  \tfrac{1}{\Delta t} (\varphi(t_{n+1}) - \varphi(t_n)) = \tfrac{1}{2} (\dot\varphi(t_{n+1}) + \dot\varphi(t_n)) + \tfrac{(\Delta t)^2}{16} \int_{-1}^1 \dddot{\varphi} (t_{n+\sfrac{1}{2}} + \tfrac{\Delta t}{2} s) (|s|^2 -1) \,  \text{d}s\quad \forall \varphi \in \cC^3([0, T]).
	\]
	Therefore, under the additional assumptions $\underline{\bu}  \in \cC^{3}_{[0,T]}(\cH_1)$ and $(\bsig, p) \in \cC^{3}_{[0,T]}(\cH_2)$, we deduce from \eqref{stabE} that 
	\begin{align}\label{stabET}
		\begin{split}
			  &\max_n\norm{ \underline{\be}^{n}_{u,h} }_{\cH_1}^2  + \max_n \norm{ (\be_{\sigma,h}^{n}, e_{p,h}^{n}) }_{\cH_2}^2 + \tfrac{\Delta t}{4}  \sum_{n=0}^{L-1} \norm{\beta^{\sfrac12} (\be_{\bu_f,h}^{n+1} + \be_{\bu_f,h}^n)}^2_{\Omega} 
			  \\ & \quad \quad 
			  + \tfrac{\Delta t}{4}  \sum_{n=0}^{L-1}\norm{ \tfrac{k+1}{h_\cF^{\sfrac{1}{2}}}(\underline{\be}^{n+1}_{u,h} + \underline{\be}^{n}_{u,h} - \underline{\be}^{n+1}_{\hat u, h} - \underline{\be}^n_{\hat u, h} )}^2_{0,\partial \cT_h} 
			  \\
			&  \quad
			\lesssim (\Delta t)^4 \big( \max_{[0,T]}\norm{\dddot{\underline{\bu} }}^2_{\cH_1} + \max_{[0,T]}\norm{(\dddot{\bsig}, \dddot{p})}^2_{\cH_2} \big)
			\\
			&
			\quad \quad  + \Delta t \sum_{n=0}^{L-1} \norm{\tfrac{h_\cF^{\sfrac{1}{2}}}{k+1} (\bchi_\sigma^{n+1} + \bchi_\sigma^{n})\bn}_{0,\partial \cT_h}^2 
			+ \Delta t \sum_{n=0}^{L-1} \norm{\tfrac{h_\cF^{\sfrac{1}{2}}}{k+1} (\chi_p^{n+1} + \chi_p^{n})}_{0,\partial \cT_h}^2
			\\
		  &  
			\quad \quad + \Delta t \sum_{n=0}^{L-1}  \abs{(\bchi_u^{n+1} + \bchi_u^n, \bchi_{\hat u}^{n+1} + \bchi_{\hat u}^n )}_{\mathcal{U} \times \hat{\mathcal{U}}}^2.
		\end{split}
	\end{align}

	We can now state the convergence result for the fully discrete scheme \eqref{fd}-\eqref{initial-fd}.
	\begin{theorem}\label{hpConvFD}
	Assume that the solution $(\ubu, (\bsig, p))$ of \eqref{eq:weakform}-\eqref{weakIC}  satisfies the time regularity assumptions $\underline{\bu}  \in \cC^3_{[0,T]}(\cH_1)$, $\underline{\bu}  \in \cC^0(H^{2+r}(\Omega, \bbR^{d \times 2}))$, $\bsig \in \cC^0(H^{1+r}(\Omega, \bbS))$, and $p \in \cC^0(H^{1+r}(\Omega))$, with $r\geq 0$. Then, there exists a constant $C>0$ independent of $h$ and $k$ such that, for all $k\geq 0$,  
	\begin{align*}
		 \max_n\norm{\underline{\bu} (t_n) - \underline{\bu}_h^n}_{\cH_1}  &+
         \max_n\norm{ (\bsig, p)(t_n) - (\bsig_h^n, p_h^n)}_{\cH_2}
 \leq  C(\Delta t)^2 \big(\max_{[0,T]}\norm{(\dddot{\bsig}, \dddot{p})}_{\cH} + \max_{[0,T]}\norm{\dddot{\underline{\bu} }}_{0,\cT_h}\big)
		\\
		&\quad + C \tfrac{h_K^{\min\{ r, k \}+1}}{(k+1)^{r+\sfrac{1}{2}}} \Big(\max_{[0,T]}\norm*{(\bsig,p)}_{1+r,\Omega}  + \max_{[0,T]}\norm{\underline{\bu} }_{2+r,\Omega}\Big). 
	\end{align*}
	\end{theorem}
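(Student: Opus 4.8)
The plan is to mimic the derivation of Theorem~\ref{hpConv} from Lemma~\ref{stab_sd}: split the total error by the triangle inequality into an $L^2$-projection part and a projected-error part, bound the latter by the fully discrete stability estimate \eqref{stabET}, and conclude with the $hp$-approximation bounds \eqref{tool1}--\eqref{tool2}. For each node $t_n$ we write
\[
\underline{\bu}(t_n) - \underline{\bu}_h^n = \underline{\bchi}_u^n + \underline{\be}_{u,h}^n, \qquad (\bsig,p)(t_n) - (\bsig_h^n, p_h^n) = (\bchi_\sigma^n, \chi_p^n) + (\be_{\sigma,h}^n, e_{p,h}^n),
\]
so that
\begin{align*}
\max_n \norm{\underline{\bu}(t_n) - \underline{\bu}_h^n}_{\cH_1} + \max_n \norm{(\bsig,p)(t_n) - (\bsig_h^n, p_h^n)}_{\cH_2} &\le \max_n \norm{\underline{\bchi}_u^n}_{\cH_1} + \max_n \norm{(\bchi_\sigma^n, \chi_p^n)}_{\cH_2} \\
&\quad + \max_n \norm{\underline{\be}_{u,h}^n}_{\cH_1} + \max_n \norm{(\be_{\sigma,h}^n, e_{p,h}^n)}_{\cH_2}.
\end{align*}
The assumed regularity ensures that Lemma~\ref{coco} applies and that the Taylor-remainder identity preceding \eqref{stabET} is licit (this implicitly requires also $(\bsig,p)\in\mathcal{C}^3_{[0,T]}(\cH_2)$, as in the passage leading to \eqref{stabET}), so the last two squared quantities are controlled by the right-hand side of \eqref{stabET}.

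The temporal consistency contribution in \eqref{stabET} is already in final form: its square root yields $C(\Delta t)^2\bigl(\max_{[0,T]}\norm{\dddot{\underline{\bu}}}_{\cH_1} + \max_{[0,T]}\norm{(\dddot{\bsig},\dddot p)}_{\cH_2}\bigr)$, which by the norm equivalences \eqref{bound:rho} and \eqref{normH} is exactly the asserted temporal term. For the remaining sums on the right-hand side of \eqref{stabET}, we use $\norm{\varphi^{n+1}+\varphi^n}^2 \le 2(\norm{\varphi^{n+1}}^2 + \norm{\varphi^n}^2)$ together with $\Delta t\,L = T$ to bound every $\Delta t\sum_{n=0}^{L-1}(\cdot)$, uniformly in $\Delta t$, by $4T$ times the corresponding $\max_{[0,T]}$ quantity; for instance
\[
\Delta t \sum_{n=0}^{L-1} \abs{(\bchi_u^{n+1} + \bchi_u^n, \bchi_{\hat u}^{n+1} + \bchi_{\hat u}^n)}_{\mathcal{U}\times\hat{\mathcal{U}}}^2 \lesssim \max_{[0,T]} \abs{(\underline{\bchi}_u, \underline{\bchi}_{\hat u})(t)}_{\mathcal{U}\times\hat{\mathcal{U}}}^2,
\]
and analogously for the weighted stress and pressure boundary terms; likewise $\max_n \norm{\underline{\bchi}_u^n}_{\cH_1} \le \max_{[0,T]}\norm{\bchi_u(t)}_{\cH_1}$ and $\max_n \norm{(\bchi_\sigma^n, \chi_p^n)}_{\cH_2} \le \max_{[0,T]}\norm{(\bchi_\sigma,\chi_p)(t)}_{\cH_2}$.

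It remains to insert the $hp$-approximation estimates. Lemma~\ref{lem:maintool} (estimate \eqref{tool1}) bounds $\max_{[0,T]}\norm{(\bchi_\sigma,\chi_p)(t)}_{\cH_2}$ and the weighted boundary norms of $\bchi_\sigma$, $\chi_p$ by $C\,\tfrac{h_K^{\min\{r,k\}+1}}{(k+1)^{r+1}}\bigl(\max_{[0,T]}\norm{\bsig}_{1+r,\Omega} + \max_{[0,T]}\norm{p}_{1+r,\Omega}\bigr)$, while $\max_{[0,T]}\norm{\bchi_u(t)}_{\cH_1}$ is of at least the same order by a standard $L^2$-projection estimate with polynomials of degree $k+1$. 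Lemma~\ref{maintool2} (estimate \eqref{tool2}) bounds $\max_{[0,T]}\abs{(\underline{\bchi}_u,\underline{\bchi}_{\hat u})(t)}_{\mathcal{U}\times\hat{\mathcal{U}}}$ by $C\,\tfrac{h^{\min\{r,k\}+1}}{(k+1)^{r+\sfrac12}}\max_{[0,T]}\norm{\underline{\bu}}_{2+r,\Omega}$. Since $r+\tfrac12\le r+1$, this last, slowest contribution dominates the spatial part; collecting all terms, taking the square root of \eqref{stabET}, and using subadditivity of the square root gives the claimed bound with spatial rate $h_K^{\min\{r,k\}+1}/(k+1)^{r+\sfrac12}$ and temporal rate $(\Delta t)^2$.

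The main obstacle has already been cleared in Lemma~\ref{coco}/\eqref{stabET}, whose proof contains the full discrete energy argument for the Crank--Nicolson scheme with midpoint right-hand side and hybrid unknowns; the present statement is a corollary, exactly parallel to Theorem~\ref{hpConv}. The only genuinely delicate points are the uniform-in-$\Delta t$ Riemann-sum bounds $\Delta t\sum_n(\cdot)\lesssim T\max_{[0,T]}(\cdot)$ and the verification that the regularity hypotheses listed in the theorem — together with the tacit $(\bsig,p)\in\mathcal{C}^3_{[0,T]}(\cH_2)$ — are precisely what \eqref{stabET}, \eqref{tool1}, and \eqref{tool2} demand.
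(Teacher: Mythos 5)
Your proposal is correct and follows essentially the same route as the paper: triangle inequality to split the error into projection and projected-error parts, the fully discrete stability bound \eqref{stabET} (i.e.\ Lemma~\ref{coco} plus the Crank--Nicolson Taylor-remainder identity) for the latter, and the $hp$-estimates \eqref{tool1}--\eqref{tool2} for the former; your explicit uniform-in-$\Delta t$ Riemann-sum bounds simply spell out what the paper leaves as ``a direct consequence.'' Your remark that the temporal term tacitly requires $(\bsig,p)\in\mathcal{C}^3_{[0,T]}(\cH_2)$, beyond the hypotheses literally listed in the theorem, is accurate and consistent with the assumptions made in the passage leading to \eqref{stabET}.
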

	\begin{proof}
		Applying the triangle inequality  we deduce from \eqref{stabET} that 
	\begin{align*}
		&\max_n\norm{\underline{\bu} (t_n) - \underline{\bu}_h^n}_{\cH_1}  +
         \max_n\norm{ (\bsig, p)(t_n) - (\bsig_h^n, p_h^n)}_{\cH_2}
		\lesssim   
        \max_n\norm{\underline{\bchi}_{u,h}^n}_{\cH_1}  +
         \max_n\norm{ (\bchi_{\sigma,h}^n, \chi_{p,h}^n)}_{\cH_2}
         \\
         & \quad
        +\Delta t \sum_{n=0}^{L-1} \norm{\tfrac{h_\cF^{\sfrac{1}{2}}}{k+1} (\bchi_\sigma^{n+1} + \bchi_\sigma^{n})\bn}_{0,\partial \cT_h}^2 
		+ \Delta t \sum_{n=0}^{L-1} \norm{\tfrac{h_\cF^{\sfrac{1}{2}}}{k+1} (\chi_p^{n+1} + \chi_p^{n})}_{0,\partial \cT_h}^2
		\\
	  &  
		\quad \quad + \Delta t \sum_{n=0}^{L-1}  \abs{(\bchi_u^{n+1} + \bchi_u^n, \bchi_{\hat u}^{n+1} + \bchi_{\hat u}^n )}_{\mathcal{U} \times \hat{\mathcal{U}}}^2
		 + C(\Delta t)^2 \big(\max_{[0,T]}\norm{(\dddot{\bsig}, \dddot{p})}_{\cH} + \max_{[0,T]}\norm{\dddot{\underline{\bu} }}_{0,\cT_h}\big),
		\end{align*}
		and the result is a direct consequence of the error estimates \eqref{tool1} and \eqref{tool2}.
	\end{proof}

\section{Numerical results}\label{sec:numresults}

The numerical results presented in this section have been implemented using the finite element library \texttt{Netgen/NGSolve} \cite{schoberl2014c++}. Firstly, we confirm the accuracy of our HDG scheme by analyzing a problem with a manufactured solution. Then, we consider a practical model problem inspired by \cite{antoniettiIMA, morency}.

\subsection{Example 1: Validation of the convergence rates} 

In this example, we confirm the decay of error as predicted by Theorem~\ref{hpConvFD} with respect to the parameters $h$, $\Delta t$ and $k$.  We employ successive levels of refinement on an unstructured mesh and compare the computed solutions to an exact solution of problem \eqref{4field-a}-\eqref{4field-d} given by 
\begin{align}\label{exactSol}
\begin{split}
p(x,y,t) &:= \sin(\pi x y) \cos(t) \quad \text{in $\Omega\times (0, T]$},
\\
\bd(x,y,t) &:= \begin{pmatrix}
	x \cos(\pi y) \cos(t)
	\\
	y \sin(\pi x) \sin(t)
	\end{pmatrix} \quad \text{in $\Omega\times (0, T]$},
\end{split}
\end{align}
where $\Omega = (0,1)\times (0, 1)$. We assume that the solid medium is isotropic and let the constitutive law \eqref{Constitutive:solid}  be  given in terms of the tensor 
\begin{equation}\label{eq:hooke} 
\cC \btau := 2 \mu\boldsymbol{\btau} + \lambda \tr(\boldsymbol{\btau}) I,
\end{equation}
where $\mu>0$ and $\lambda>0$ are the Lam\'e coefficients. We choose $\bF_f = \mathbf 0$ and compute the source terms $\bF_s$ and $g$ corresponding to the manufactured solution \eqref{exactSol} of \eqref{4field-a}-\eqref{4field-d} with the material parameters given by \eqref{L1} or \eqref{L2}. We prescribe non-homogeneous boundary conditions \eqref{BC} with $\Gamma^s_D = \partial \Omega$ and $\Gamma^f_N=\partial \Omega$.

In our first test, the parameters are chosen as 
\begin{gather}\label{L1}
	\rho_{11} = 10, \quad \rho_{12} = 10, \quad \rho_{22} = 20, \quad \mu= 50, \quad \lambda = 100, \quad s = 1, \quad \beta = 1, \quad \alpha = 1.
\end{gather}
\begin{table}[!htb]
	\centering
	\begin{minipage}{0.47\linewidth}
		\centering
{\footnotesize
\begin{tblr}{hline{1,22} = {1.5pt,solid}, hline{2, 7, 12, 17} = {1pt, solid},
	hline{6, 11, 16, 21} = {dashed}, vline{ 3} = {dashed}, colspec = {c l  c c},}  
	 \textbf{$k$} & \textbf{$h$} & $\mathtt{e}^{L}_{hk}(\bsig, p)$ & $\mathtt{e}^{L}_{hk}(\underline{\bu})$ 
	 \\ 
	 \SetCell[r=4]{c} 0 
     & 1/16 &  2.51e+00   &  5.78e-01    \\
    & 1/32 &  9.75e-01    & 1.90e-01     \\
    & 1/64  & 3.53e-01    &  5.24e-02    \\
    & 1/128 & 1.34e-01    &  1.36e-02    \\
	 rates &   &   $1.40$ & $1.80$       \\ 
	 \SetCell[r=4]{c} 1 
    &1/16 &  2.77e-02   &  3.79e-03    \\
    &1/32 &  6.49e-03   &  5.27e-04   \\
    &1/64 &  1.41e-03   &  7.11e-05   \\
    &1/128 & 3.09e-04   &  1.05e-05   \\ 
	rates  &  & $2.16$ & $2.83$ \\ 
	 \SetCell[r=4]{c} 2 
	& 1/8  &  3.85e-03   &  6.19e-04    \\
    & 1/16 &  3.35e-04   &  2.74e-05   \\
    & 1/32 &  3.79e-05   & 1.80e-06   \\
    & 1/64 &  4.30e-06   & 1.21e-07   \\
	rates  &  & $3.26$ & $4.11$ \\ 
	 \SetCell[r=4]{c} 3 
	& 1/4  &  1.22e-03   &  2.94e-04   \\ 
    & 1/8  &  7.33e-05   &  9.24e-06   \\
    & 1/16 &  3.19e-06   &  1.71e-07   \\
    & 1/32 &  1.99e-07   &  5.13e-09   \\ 
	rates &  & $4.19$ & $5.27$ \\
\end{tblr}
}	
\caption{Error progression and convergence rates are shown for a sequence of uniform refinements in space and over-refinements in time. The errors are measured at $T=0.3$, by employing the set of coefficients \eqref{L1}. The exact solution is given by \eqref{exactSol}.}
		\label{T1}
\end{minipage}\hfill
\begin{minipage}{0.47\linewidth}
	\centering
{\footnotesize 
\begin{tblr}{hline{1,22} = {1.5pt,solid}, hline{2, 7, 12, 17} = {1pt, solid},
	hline{6, 11, 16, 21} = {dashed}, vline{ 3} = {dashed}, colspec = {c l  c c},} 
	 \textbf{$k$} & \textbf{$h$} & $\mathtt{e}^{L}_{hk}(\bsig, p)$ & $\mathtt{e}^{L}_{hk}(\underline{\bu})$ \\ 
     \SetCell[r=4]{c} 2
	& 1/8  &  8.48e+02   &  2.79e+02   \\
    & 1/16 &  7.05e+01   &  1.14e+01   \\
    & 1/32 &  8.99e+00   &  7.42e-01   \\
    & 1/64 &  1.11e+00   &  4.49e-02   \\
	rates &  & $3.19$ & $4.20$ \\ 
	 \SetCell[r=4]{c} 3
	& 1/4  &  3.55e+02   &  1.40e+02   \\
    & 1/8  &  2.18e+01   &  4.35e+00   \\
    & 1/16 &  8.41e-01   &  8.12e-02   \\
    & 1/32 &  4.77e-02   &  2.31e-03   \\
	 rates &  & $4.28$ & $5.29$ \\ 
	 \SetCell[r=4]{c} 4
	& 1/4  &  7.24e+00   &  2.11e+00   \\
    & 1/8  &  2.24e-01   &  2.37e-02   \\
    & 1/16 &  2.83e-03   &  1.72e-04   \\
    & 1/32 &  8.98e-05   &  4.63e-06   \\ 
	rates  &  & $5.43$ & $6.26$ \\ 
	\SetCell[r=4]{c} 5
	& 1/2  &  7.31e+01   &  4.81e+01   \\
    & 1/4  &  4.87e-01   & 1.20e-01    \\
    & 1/8  &  7.16e-03   &  8.54e-04   \\
    & 1/16 &  5.99e-05   &  5.03e-06   \\
	rates  &  & $6.75$ & $7.73$ \\ 
\end{tblr}
}
\caption{Error progression and convergence rates are shown for a sequence of uniform refinements in space and over-refinements in time. The errors are measured at $T=0.3$, by employing the set of coefficients \eqref{L2}. The exact solution is given by \eqref{exactSol}.}
\label{T2}
\end{minipage}
\end{table}

The interval $[0, T]$ is divided uniformly into subintervals of length $\Delta t$. Given that the Crank-Nicolson method has an error of $O(\Delta t^2)$, we set $\Delta t \approx O(h^{(k+2)/2})$ to ensure that the time discretization error does not affect the convergence order of the spatial discretization. For the tables and figures intended for accuracy verification, we denote the $L^2-$norms of the errors as follows:  
\begin{equation*}
\mathtt{e}^{L}_{hk}(\bsig,p) := \norm{(\bsig(T), p(T)) - (\bsig_h^L, p_h^L)}_{\cH_2},  \qquad \mathtt{e}^L_{hk}(\bu)  := \norm{\underline{\bu}(T)  - \underline{\bu}^L_h}_{\cH_1}.
\end{equation*}
 The rates of convergence in space are computed as 
\begin{equation}\label{rate}
	\mathtt{r}_{hk}^L(\star)  =\log(\mathtt{e}^L_{hk}(\star)/\tilde{\mathtt{e}}^L_{hk}(\star))[\log(h/\tilde{h})]^{-1}
\quad \star \in \set{(\bsig,p), \underline{\bu}},
\end{equation}
where $\mathtt{e}_{hk}^L(\star)$, $\tilde{\mathtt{e}}^L_{hk}(\star)$ denote errors generated at time $T$ on two consecutive  meshes of sizes $h$ and~$\tilde{h}$, respectively.  

In Table~\ref{T1} we present the errors, at the final time $T = 0.3$ relative to the mesh size $h$ for four different polynomial degrees $k$. We also include the arithmetic mean of the experimental convergence rates obtained by \eqref{rate}. We observe that the convergence in the stress and pressure fields achieves the optimal rate of $O(h^{k+1})$. Furthermore, Table~\ref{T1} highlights a convergence rate of $O(h^{k+2})$ for the velocities; see Remark~\ref{R1}. 

\begin{table}[h!]
	\begin{minipage}{0.47\linewidth}
	  \centering
	  {\footnotesize
	  \begin{tblr}{hline{1,7} = {1.5pt,solid},
		  hline{2} = {dashed},
		  vline{2} = {dashed},
		  colspec = {lX[c]X[c]X[c]X[c]},}
		  $\Delta t$ & $\mathtt{e}^{L}_{hk}(\bsig,p)$ & $\mathtt{r}_{hk}^L(\bsig,p)$ & $\mathtt{e}_{hk}^L(\underline{\bu})$ & $\mathtt{r}_{hk}^L(\underline{\bu})$ \\
		  1/16 &  3.57e-03 &  * &  1.13e-04 &  * \\ 
        1/32 &  9.17e-04 &  1.96 &  2.92e-05 &  1.96  \\
        1/64 &  2.33e-04 &  1.98 &  6.39e-06 &  2.19  \\
        1/128 & 5.85e-05 &  1.99 &  1.63e-06 &  1.97  \\
	  \end{tblr}
	  }
	  \captionof{table}{Computed errors for a sequence of uniform refinements in time with $h=1/16$ and $k=5$. The errors are measured at $t=1$, with the coefficients \eqref{L1}. The exact solution is provided by \eqref{exactSol}.}
	  \label{T3}
	\end{minipage}
	\hfill
	\begin{minipage}{0.47\linewidth}
	  \centering
\includegraphics[width=0.9\textwidth]{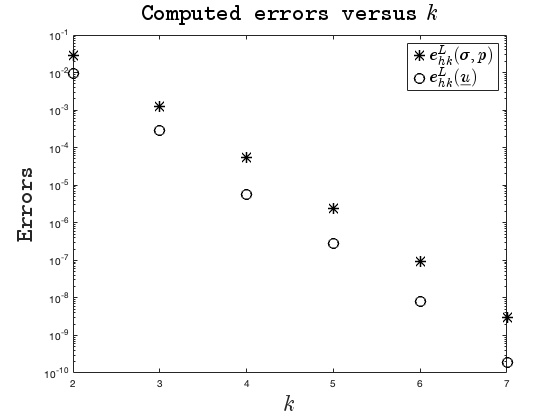}
	  \captionof{figure}{Computed errors versus the polynomial degree $k$ with $h=1/4$ and $\Delta t = 10^{-6}$. The errors are measured at $t=0.3$, by employing the coefficients \eqref{L1}. The exact solution is provided by \eqref{exactSol}.}
	  \label{T4}
	\end{minipage}
  \end{table} 

To verify the accuracy and stability of the scheme for nearly incompressible poroelastic media, we repeat the same experiment with the following set of material parameters: 
\begin{gather}\label{L2}
\rho_{11} = 10, \quad \rho_{12} = 10, \quad \rho_{22} = 20, \quad \mu= 50, \quad \lambda = 10^{8}, \quad s = 10^{-4}, \quad \beta = 1, \quad \alpha = 1.
\end{gather}
The error decay for this case is collected in Table~\ref{T2}.  These results demonstrate the ability of the proposed HDG scheme to produce accurate approximations in the case of large parameters $\lambda$.

On the other hand, Table~\ref{T3} shows the convergence results obtained after fixing the mesh size at $h=1/16$ and the polynomial degree at $k= 5$ and varying the time step $\Delta t$ used to subdivide the time interval $[0,T]$ uniformly, with $T = 1$. The convergence rates in time are calculated as
\[
\mathtt{r}_{hk}^L(\star)  =\log(\mathtt{e}^L_{hk}(\star)/\tilde{\mathtt{e}}^L_{hk}(\star))[\log(\Delta t/\widetilde{\Delta t})]^{-1}\quad \star \in \set{(\bsig,p), \underline{\bu}},
\]
where $\mathtt{e}^L_{hk}$, $\tilde{\mathtt{e}}^L_{hk}$ denote errors generated on two consecutive runs considering time steps $\Delta t$ and~$\widetilde{\Delta t}$, respectively. 
In this example, we consider the same manufactured solution obtained from \eqref{exactSol} with material coefficients \eqref{L1}. The expected convergence rate of $O(\Delta t^2)$ is reached as the time step is refined.

Finally, we fix the space mesh size $h = 1/4$ and the time mesh size $\Delta t = 10^{-6}$ and let $k$ vary from 2 to 7. In Figure~\ref{T4} we report the error $\mathtt{e}^{L}_{hk}(\bsig,p)$ in the stress variable and the error $\mathtt{e}^{L}_{hk}(\underline{\bu})$ in velocity at  $T = 0.3$  as a function of the polynomial degree $k$ on a semi-logarithmic scale. As expected, exponential convergence is observed.

\subsection{Example 2: Wave propagation in a poroelastic medium}

We investigate a wave propagation problem in a homogeneous and isotropic poroelastic medium. The computational domain $\Omega = (0,4800)\times (0,4800)\, \unit{m^2}$ contains an explosive source at position $\boldsymbol{x}_s = (1600,2900) \, \unit{m}$. We model this point-supported excitation, by defining spatial and temporal source functions by
\[
  \bF(x,y) = \begin{cases}
		\big(1 - \frac{\norm{\boldsymbol{r}}^2}{4h^2}\big)\frac{\boldsymbol{r}}{\norm{\boldsymbol{r}}} & \text{if $\norm{\boldsymbol{r}} < 2h$}
		\\
		0 & \text{otherwise}
	\end{cases}\quad \text{and} \quad 
	S(t) := (1 - 2\omega^2(t - t_0)^2)e^{-\omega^2(t - t_0)^2},
\]
where $\omega:= \pi f_0$, with peak frequency $f_0 = 5\, \unit{Hz}$, while $t_0 = 0.3\, \unit{s}$ represents the time shift parameter. The vector $\boldsymbol{r} = (x - 1600, y - 2900)^\texttt{t}$ denotes the distance from the source position, and $h$ corresponds to the mesh size used for spatial discretization. We point out that the spatial function $\bF$ creates a smooth, radially symmetric force distribution around the source point, whereas the temporal function $S(t)$ generates a Ricker wavelet with controlled frequency content. 

In the governing equations \eqref{4field-a}-\eqref{4field-d}, we set:
\begin{equation}\label{sourceW}
\bF_s = \bF_f = \bF(x,y)S(t) \quad \text{and}\quad g = 0,     
\end{equation}
and consider vanishing initial conditions. For the boundary conditions, we consider two distinct cases. On the top boundary  $\Gamma_{top} = \{(x, 4800),\quad 0 < x < 4800\}$, we impose free surface conditions: 
\[
(\bsig - \alpha p \mathrm{I}_d)\bn = \mathbf 0 \quad \text{and} \quad \bu_f\cdot \bn = 0 \quad \text{on $\Gamma_{top}\times (0, T]$}
\]
On the remaining three edges $\Gamma_{a}:= \partial \Omega\setminus \Gamma_{top}$, we implement first-order absorbing boundary conditions to prevent artificial reflections and simulate an unbounded medium (see  \cite{antoniettiIMA, morency}): 
\begin{align}\label{bcW}
\begin{split}
(\bsig - \alpha p \mathrm{I}_d)\bn &= c_{pI} \rho_{11}  (\bu_s\cdot \bn) \bn + c_{pII} \rho_f  (\bu_f\cdot \bn) \bn + (\rho_{11} - \rho_f\phi/\nu ) c_s (\mathrm{I}_d - \bn \bn^\mt) \bu_s \quad \text{on $\Gamma_{a}\times (0, T]$}
\\
-p\bn &= c_{pII} \rho_f \phi/\nu  (\bu_f\cdot \bn) \bn + c_{pI} \rho_f  (\bu_s\cdot \bn) \bn \quad \text{on $\Gamma_{a}\times (0, T]$}.
\end{split}
\end{align}
Here, $c_s:= \sqrt{\sfrac{\mu}{\rho_s}}$ represents the shear wave velocity, while $c_{pI} := \sqrt{\gamma_2}$ and $c_{pII} := \sqrt{\gamma_1}$ denote the fast and slow compressional wave velocities, respectively. The values $0 < \gamma_1 < \gamma_2$ are obtained as eigenvalues of the generalized problem $B\bw= \gamma R\bw$ where
\[
B: = \begin{pmatrix}
    \lambda + 2 \mu + \alpha^2/s & \alpha/s
    \\
    \alpha/s & 1/s
\end{pmatrix}.
\]
 
\begin{figure}[t!]
\begin{center}
\includegraphics[width=0.32\textwidth]{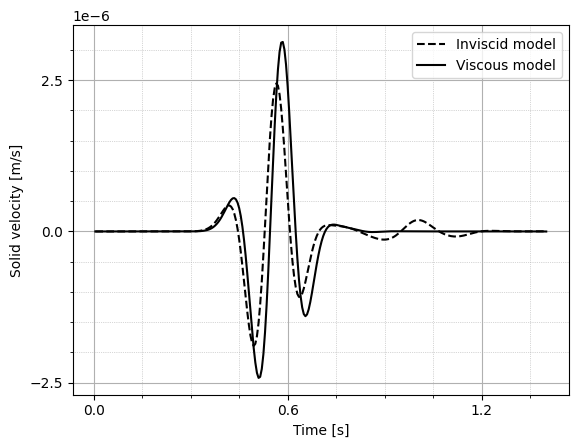}
\includegraphics[width=0.32\textwidth]{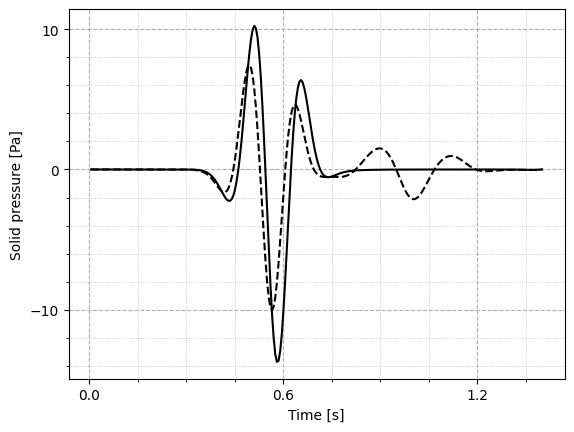}
\includegraphics[width=0.32\textwidth]{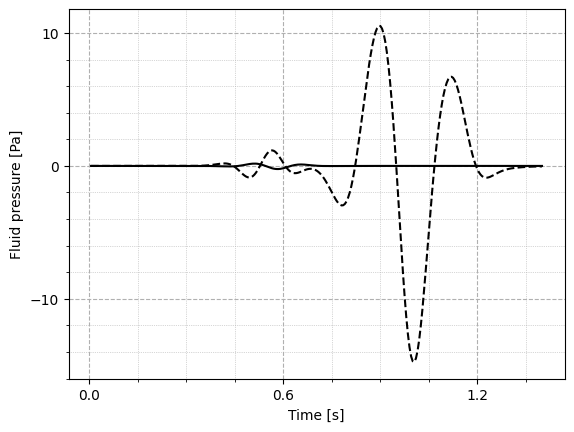}
\end{center}
\vspace{-4mm}
\caption{
The figure illustrates transient behaviour at the source receiver  $\boldsymbol{x}_r = (2000, 2200)\, \unit{m}$, displaying the $y$-components of solid velocity (left panel), the solid pressure (center panel) and fluid pressure (right panel). Two cases are compared: a purely inviscid fluid ($\eta = 0$) shown with dashed lines and a viscous fluid ($\eta = 0.0015$) represented by continuous lines. Problem \eqref{4field-a}-\eqref{4field-d} is solved using the source terms \eqref{sourceW}, boundary conditions \eqref{bcW}, and parameter set \eqref{coeffs}, with $h=100$, $k=5$, and $\Delta t = 0.005$.}
\label{fig:transients}
\end{figure}

The medium is characterized by the following physical parameters:
\begin{align}\label{coeffs}
\begin{split}
    \rho_S &= 2200\, \unit{kg/m^3}, \quad \rho_F = 950\, \unit{kg/m^3}, \quad \phi = 0.4,\quad \nu = 2, 
\\
\lambda &= \num{7.2073e9}\, \unit{Pa}, \quad \mu  = \num{4.3738e9}\, \unit{Pa},
\quad \eta = 0\, (\text{or 0.0015})\, \unit{Pa\times s}
\\
\alpha &= \num{0.0290},\quad  s = \num{1.462e-10}\, \unit{Pa^{-1}},\quad \kappa = \num{e-12}\, \unit{m^2}.
\end{split}
\end{align}

We perform numerical simulations of the poroelastic system described by equations \eqref{4field-a}-\eqref{4field-d} to investigate the influence of fluid viscosity on wave propagation patterns. The computational setup employs the following parameters: a spatial mesh size $h = 100$, polynomial elements of degree $k = 5$, and a temporal discretization with time step $\Delta t = 0.005$. The source terms are specified by \eqref{sourceW}, and the boundary conditions follow \eqref{bcW}. To elucidate the effects of viscous damping, we perform a comparative analysis between two scenarios: one with a viscous fluid ($\eta = 0.0015\, \unit{Pa\times s}$) and another with an inviscid fluid ($\eta = 0\, \unit{Pa\times s}$), while maintaining all other material parameters defined in \eqref{coeffs}. 

Figure~\ref{fig:transients} presents the vertical component of solid velocity, solid pressure (defined as $p_s := -\tr(\bsig)/2$), and pore pressure $p$. The results reveal a significant attenuation of the slow compressional wave in the diffusive model. We also observe differences in the fast wave arrival times between the viscous and inviscid cases. These observations align with similar studies \cite{antoniettiIMA, morency}.

In Figures \ref{fig:velo} and \ref{fig:mises}, we present temporal snapshots comparing wave propagation patterns with and without fluid viscosity. Figure \ref{fig:velo} displays the vertical component of the velocity, while Figure \ref{fig:mises} shows the von Mises stress, calculated from the total Cauchy stress tensor $\bsig - \alpha p \mathrm{I}_d$. The comparison between viscous ($\eta = 0.0015\, \unit{Pa\times s}$) and inviscid ($\eta = 0\, \unit{Pa\times s}$) cases reinforces our previous observations: the viscous model exhibits stronger attenuation of the slow compressional wave and the alteration of the fast wave propagation speed. These effects are clearly visible in both the velocity field and the stress distributions. We note that the implemented first-order absorbing boundary conditions do not provide perfect transparency, suggesting that a perfectly matched layer (PML) approach could yield more accurate results by reducing artificial wave reflections at the boundaries.

\begin{figure}[h!]
\begin{center}
\includegraphics[scale=0.3]{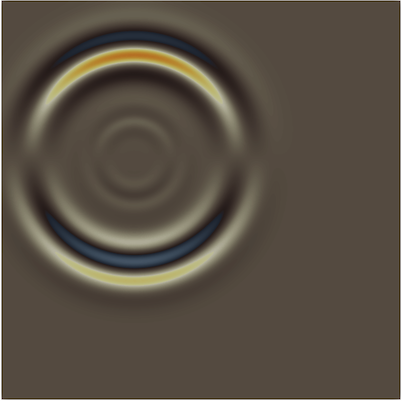}
\includegraphics[scale=0.3]{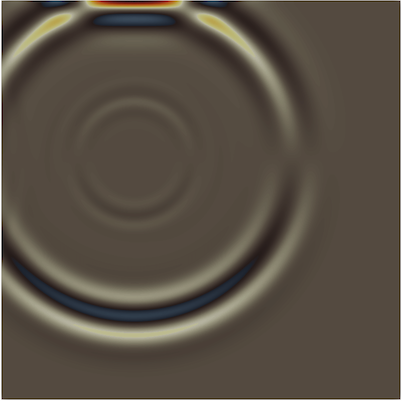}
\includegraphics[scale=0.3]{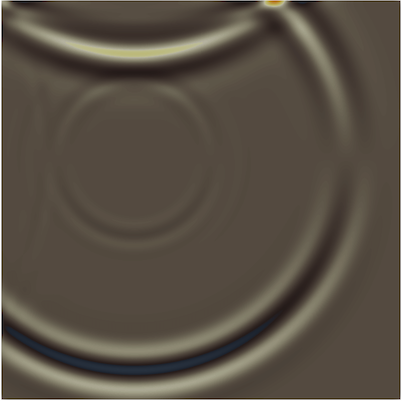}
\\
\includegraphics[scale=0.3]{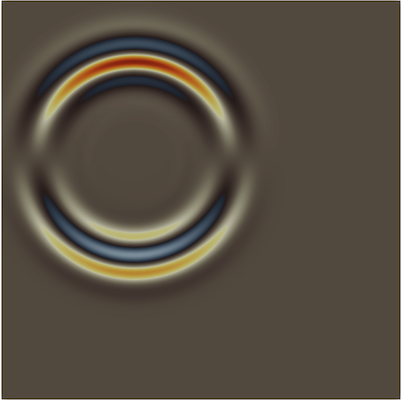}
\includegraphics[scale=0.3]{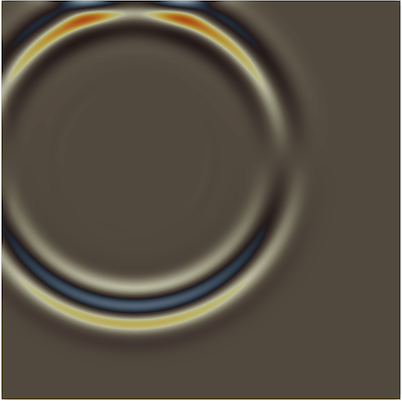}
\includegraphics[scale=0.3]{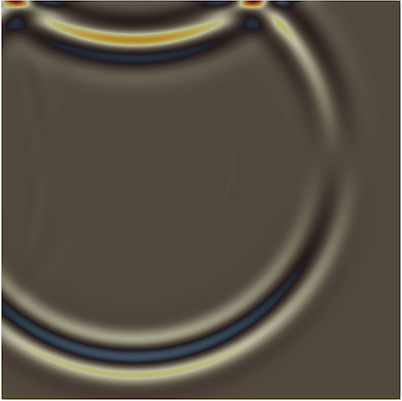}
\\
\includegraphics[width=0.5\textwidth]{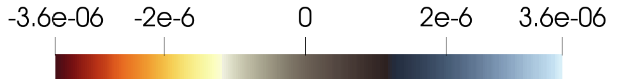}
\end{center}
\vspace{-4mm}
\caption{
Snapshots of the $y$-component of the solid velocity $\bu_s$ at times 0.7\,\unit{s}, 0.9\,\unit{s}, and 1.1\,\unit{s} (left to right panels). Problem \eqref{4field-a}-\eqref{4field-d} is solved using the source terms \eqref{sourceW}, boundary conditions \eqref{bcW}, and parameter set \eqref{coeffs}, with $h=100$, $k=5$, and $\Delta t = 0.005$. The top row shows results for an inviscid fluid ($\eta = 0$), while the bottom row corresponds to a viscous fluid ($\eta = 0.0015$).}
\label{fig:velo}
\end{figure}

\begin{figure}[h!]
\begin{center}
\includegraphics[scale=0.3]{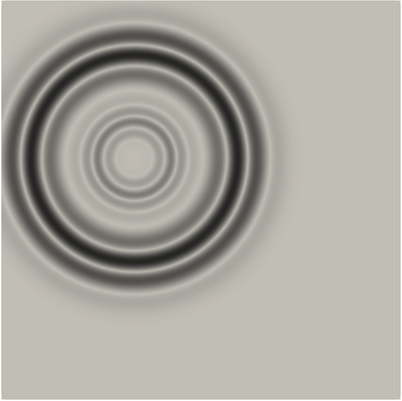}
\includegraphics[scale=0.3]{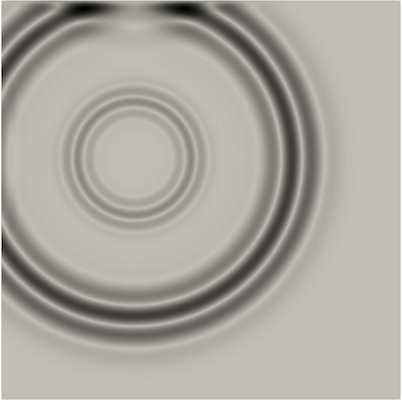}
\includegraphics[scale=0.3]{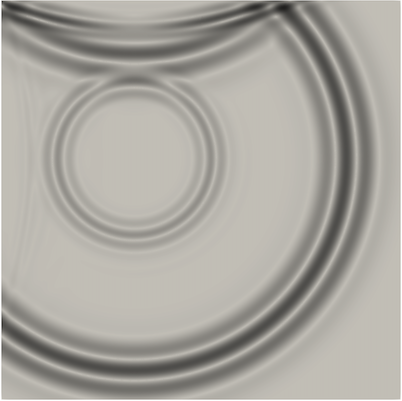}
\\
\includegraphics[scale=0.3]{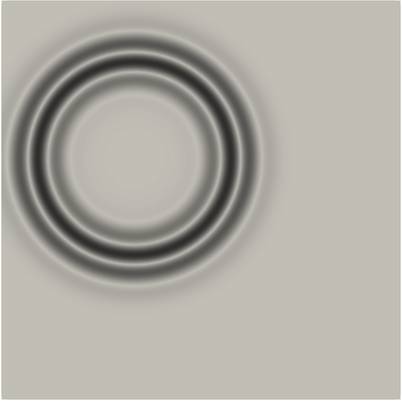}
\includegraphics[scale=0.3]{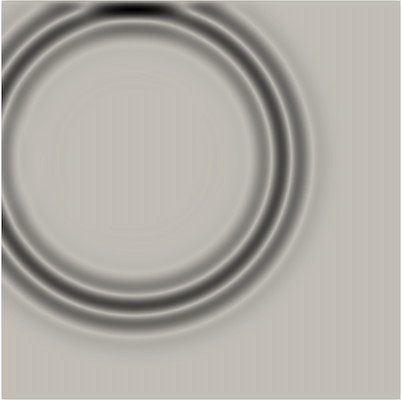}
\includegraphics[scale=0.3]{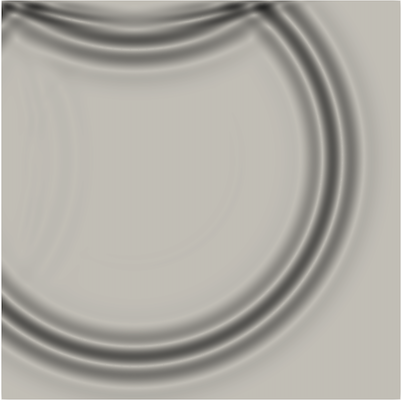}
\\
\includegraphics[width=0.5\textwidth]{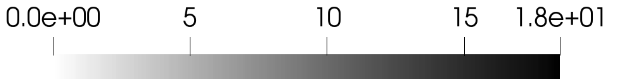}
\end{center}
\vspace{-4mm}
\caption{Snapshots of the von Mises stress, computed from the tensor $\bsig - \alpha p \mathrm{I}_d$, at times 0.7\,\unit{s}, 0.9\,\unit{s}, and 1.1\,\unit{s} (left to right panels). Problem \eqref{4field-a}-\eqref{4field-d} is solved using the source terms \eqref{sourceW}, boundary conditions \eqref{bcW}, and parameter set \eqref{coeffs}, with $h=100$, $k=5$, and $\Delta t = 0.005$. The top row shows results for an inviscid fluid ($\eta = 0$), while the bottom row corresponds to a viscous fluid ($\eta = 0.0015$).}
\label{fig:mises}
\end{figure}

\section{Conclusion}
This study presents a novel formulation of the Biot model for low-frequency wave propagation in poroelastic media. We establish well-posedness and energy stability of the variational formulation and introduce a hybridizable discontinuous Galerkin (HDG) space discretization method for both 2D and 3D problems. Our $hp$-convergence analysis of the semi-discrete scheme, coupled with Crank-Nicolson temporal discretization, provides a robust theoretical foundation. Numerical experiments confirm the effectiveness and accuracy of the method. Future work will extend this approach to poroviscoelastic media, incorporating viscous dissipation effects within the solid skeleton.

\bibliographystyle{plainnat}
\bibliography{cvBib.bib}

\end{document}